\let\cal\mathcal
\let\hat\widehat
\let\tilde\widetilde
\let\phi\varphi
\let\epsilon\varepsilon
\newcommand\Q{{\bf Q}} 
\newcommand\Z{{\bf Z}}
\newcommand\C{{\bf C}}
\newcommand\N{{\bf N}}
\newcommand\R{{\bf R}}
\newcommand\A{{\bf A}}
\newcommand\E{{\bf E}}
\newcommand\B{{\bf B}}
\newcommand\G{{\mathcal G}}
\newcommand\F{{\bf F}}
\newcommand\D{{\bf D}}
\renewcommand\c{{\bf c}}
\newcommand\id{{\mathrm{id}}}
\newcommand\pa{{\mathrm{pa}}}
\newcommand\an{{\mathrm{an}}}
\newcommand\ra{{\rightarrow}}
\renewcommand{\O}{{{\cal O}}}
\newcommand\Zp{{\Z_p}}
\newcommand\Qp{{\Q_p}}
\newcommand\Cp{{\C_p}}
\newcommand\Qpbar{{\overline{\Q}_p}}
\newcommand\At{{\tilde{\bf{A}}}}
\newcommand\Atplus{{\tilde{\bf{A}}^+}}
\newcommand\Bt{{\tilde{\bf{B}}}}
\newcommand\Btplus{{\tilde{\bf{B}}^+}}
\newcommand\Et{{\tilde{\bf{E}}}}
\newcommand\Etplus{{\tilde{\bf{E}}^+}}
\newcommand\Gal{{\mathrm{Gal}}}
\newcommand\LT{{\mathrm{LT}}}
\newcommand\la{{\mathrm{la}}}
\newcommand\cbf{{\bf{c}}}
\newcommand\kbf{{\bf{k}}}
\newcommand\cycl{{\mathrm{cycl}}}
\newcommand\rig{{\mathrm{rig}}}
\newcommand\valr{{\mathrm{val}_R}}
\newcommand\valM{{\mathrm{val}_M}}
\newcommand\ac{{\mathrm{ac}}}
\numberwithin{equation}{section}
\author{Léo Poyeton}
\address{Institut de Mathématiques de Bordeaux}
\email{leo.poyeton@math.u-bordeaux.fr}
\urladdr{https://www.math.u-bordeaux.fr/~lpoyeton/}
\date{\today}
\title{Locally analytic vectors and $\Zp$-extensions}
\begin{document}

\begin{abstract}
Let $K$ be a finite extension of $\Qp$ and let $\G_K = \Gal(\Qpbar/K)$. Lately, interest has risen around a generalization of the theory of $(\phi,\Gamma)$-modules, replacing the cyclotomic extension with an arbitrary infinitely ramified $p$-adic Lie extension. Computations from Berger suggest that locally analytic vectors should provide such a generalization for any arbitrary infinitely ramified $p$-adic Lie extension, and this has been conjectured by Kedlaya.

In this paper, we focus on the case of $\Zp$-extensions, using recent work of Berger-Rozensztajn and Porat on an integral version of locally analytic vectors and explain what the locally analytic vectors in the higher rings of periods $\At^{\dagger}$ look like in this setting. We show that the existence of nontrivial locally analytic vectors in $\At^\dagger$, a necessary condition for Kedlaya's conjecture to hold, is equivalent to the existence of an overconvergent lift of the field of norms attached to the $\Zp$-extension. 

In the anticyclotomic setting, assuming that such an overconvergent lift exists, we are able to construct elements in the corresponding Robba ring which should not exist according to a conjecture of Berger. We then prove that in this specific setting, a particular case of Berger's conjecture holds, discarding the existence of such elements. In particular, this disproves Kedlaya's conjecture and shows that there is no overconvergent lift of the field of norms in the anticyclotomic setting. 
\end{abstract}

\subjclass{11F85; 11S15; 11S20; 22E; 13J; 46S10}

\keywords{Locally analytic vectors, $(\phi,\Gamma)$-modules, field of norms}

\maketitle

\tableofcontents

\section*{Introduction}
Let $p$ be a prime and let $K$ be a finite extension of $\Qp$. One of the main ideas to study $p$-adic representations and $\Zp$-representations of $\G_K=\Gal(\Qpbar/K)$ is to use an intermediate extension $K \subset K_\infty \subset \Qpbar$ such that $K_\infty/K$ is nice enough but still deeply ramified (in the sense of \cite{coatesgreenberg}), so that $\Qpbar/K_\infty$ is almost étale and ``contains almost all the ramification of the extension $\Qpbar/K$''. If $K_\infty/K$ is an infinitely ramified $p$-adic Lie extension then those assumptions are satisfied. Classically, one lets $K_\infty$ be the cyclotomic extension $K(\mu_{p^\infty})$ of $K$. 

One striking result following this idea has been the construction of cyclotomic $(\phi,\Gamma)$-modules.  Fontaine has constructed in \cite{Fon90} an equivalence of categories $V \mapsto \D(V)$ between the category of all $p$-adic representations of $\G_K$ and the category of étale $(\phi,\Gamma)$-modules. Different theories of cyclotomic $(\phi,\Gamma)$-modules can be defined: one can define them over a $2$-dimensional local ring $\B_K$, over a subring $\B_K^\dagger$ of $\B_K$ consisting of so-called overconvergent elements, or over the Robba ring $\mathcal{R}_K$. In each case, a $(\phi,\Gamma)$-module is a finite free module over the corresponding ring, equipped with semilinear actions of $\phi$ and $\Gamma = \Gal(K_\cycl/K)$ commuting one to another (the ring itself being equipped with such actions). 

Thanks to a theorem of Cherbonnier and Colmez \cite{cherbonnier1998representations} and a theorem of Kedlaya \cite{slopes}, these different theories are equivalent. Moreover, the theories over both $\B_K^\dagger$ and $\B_K$ come with their integral counterparts, so that free $\Zp$-representations of $\G_K$ are equivalent to étale $(\phi,\Gamma)$-modules over some integral subring of either $\B_K^\dagger$ or $\B_K$.

Lately, there has been an increasing interest in generalizing both $(\phi,\Gamma)$-modules theory \cite{Ber13lifting,Car12,KR09} and more generally in understanding how to replace the cyclotomic extension by an arbitrary infinitely ramified $p$-adic Lie extension in $p$-adic Hodge theory \cite{Ber14SenLa, poyeton2022locally}.  

One could try to define $(\phi,\Gamma)$-modules attached to an almost totally ramified $p$-adic Lie extension by copying the constructions in the cyclotomic case. This strategy relies on finding a ``lift of the field of norms'' and happens to work in the Lubin-Tate setting \cite{KR09}. Under some strong assumptions (which are not always met even in the cyclotomic case), namely that the lift is of ``finite height'', Berger showed in \cite{Ber13lifting} that there were some restrictions on the kind of extensions one could consider in this case, proving for example that there is no finite height lift of the field of norms in the anticyclotomic setting. The author proved that, under the same strong assumptions, the only extensions for which one could lift the field of norms were actually only the Lubin-Tate ones \cite{P19} (upto finite extensions). A more natural and less constraining assumption would be to ask for which extensions one could have an overconvergent lift, but in this case almost nothing is known.

An other idea to generalize $(\phi,\Gamma)$-modules theory, and which has been used with success by Berger and Colmez \cite{Ber14SenLa} to generalize Sen theory, has been to use the theory of locally analytic vectors, initially introduced by Schneider and Teitelbaum \cite{schneiderteitelvrai}. Berger and Colmez have shown that Sen theory could be completely generalized to any arbitrary infinitely ramified $p$-adic Lie extension by using locally analytic vectors. Computations from Berger \cite{Ber14MultiLa} showed that locally analytic vectors in the cyclotomic setting recovered the cyclotomic $(\phi,\Gamma)$-modules over the Robba ring, and suggested that the theory of locally analytic vectors should be able to define a theory of $(\phi,\Gamma)$-modules for any arbitrary infinitely ramified $p$-adic Lie extension. In \cite{kedlaya2013conj}, Kedlaya conjectured that indeed, locally analytic vectors should provide a nice $(\phi,\Gamma)$-module theory for any such $p$-adic Lie extension, and that the theory  should even be defined at an integral level.

Up until recently, locally analytic vectors were only defined in a setting in which $p$ is inverted, so it was difficult to use them in an integral setting or in characteristic $p$. One could say that an element $x$ in a free $\Zp$-algebra is locally analytic if it becomes locally analytic after inverting $p$, which is what Kedlaya does in the statement of his conjecture, but this definition is not very practical and does not extend for characteristic $p$ algebras.

Recently, Berger-Rozensztajn \cite{BerRozdecompletion,berger2022super}, Gulotta \cite{gulotta2019equidimensional}, Johansson and Newton \cite{johansson2019extended} and Porat \cite{porat2024locally} have generalized the classical notion of locally analytic vectors (denoted by ``Super-Hölder vectors'' in the papers of Berger and Rozensztajn) to a characteristic $p$ and integral setting, by translating the property of being locally analytic in terms of Mahler expansions. In \cite{porat2024locally}, Porat has proven that these new integral locally analytic vectors can be used to recover cyclotomic $(\phi,\Gamma)$-modules, thus generalizing the computations of Berger \cite{Ber14MultiLa} to an integral setting. This makes it possible to reinterpret Kedlaya's conjecture in terms of those new integral locally analytic vectors. 

In this paper, we focus on the particular case of $\Zp$-extensions, and try to give a description on what the locally analytic vectors in the rings used to define $(\phi,\Gamma)$-modules are. For technical reasons, we have to assume that $p \neq 2$ and we do so from that point on (though we expect all our results to also hold when $p=2$). Let $K_\infty/K$ be a totally ramified $\Zp$-extension, and we look at the structure of the ring $(\At^\dagger)^{\Gal(\Qpbar/K_\infty), \Gal(K_\infty/K)-\la}$, which we write $(\At_K^\dagger)^\la$ for the rest of the introduction. 

Our first result is that only two very different situations may occur:

\begin{theo}
\label{main theo struc}
\begin{enumerate}
\item Either there is no nontrivial locally analytic vectors in $\At_K^{\dagger}$, that is $(\At_K^\dagger)^\la = \O_K$; 
\item or $(\At_K^\dagger)^\la = \phi^{-\infty}(\A_K^\dagger)$, where $\A_K^\dagger$ is a ring of overconvergent functions in one variable.
\end{enumerate}
\end{theo}

In the second case, we also prove that everything behaves as in the cyclotomic setting. In particular, we obtain an overconvergent lift of the field of norms, and we also prove that the existence of such a lift guarantees that we are in the second case of theorem \ref{main theo struc}:

\begin{theo}
\label{theo intro oclift equals nontrivial locan}
If $K_\infty/K$ is a $\Zp$-extension, then there exists an overconvergent lift of the field of norms of $K_\infty/K$ if and only if there exists a nontrivial locally analytic vector in $\At_K^{\dagger}$.
\end{theo}

Of course, if one believes in Kedlaya's conjecture, then the first situation in theorem \ref{main theo struc} above should never arise (otherwise the $(\phi,\Gamma)$-modules theory it defines is too small to encode all the $p$-adic representations of $\G_K$) and we prove that indeed the conjecture implies the existence of nontrivial locally analytic vectors.

When $K= \Q_{p^2}$, the unramified extension of $\Q_p$ of degree $2$, and $K_\infty/K$ is the anticyclotomic extension, there exists an element in $\mathrm{Frac}(\Bt_{\rig,K}^\dagger)$ which is invariant by the Frobenius $\phi_q$, and over which the Galois action is locally analytic. Assuming the existence of a nontrivial lift of the field of norms of $K_\infty/K$, we prove that this implies the existence of a nontrivial element in $(\mathrm{Frac}\cal{R}_K)^{\phi_q=1}$, where $\cal{R}_K$ is the corresponding Robba ring endowed with a Galois action and a Frobenius map. According to a conjecture of Berger in \cite{berger2022substitution}, such an element should not exist. Using the additional tools given by the existence of an overconvergent lift of the field of norms and the properties satisfied by our element in $\mathrm{Frac}(\cal{R}_K)^{\phi_q=1}$, we prove that indeed such an element cannot exist, which proves the following:

\begin{theo}
There is no overconvergent lift of the field of norms in the anticyclotomic setting. In particular, Kedlaya's conjecture does not hold in general.
\end{theo}

In particular, this explains why one would need to use a theory of derived anticyclotomic $(\phi,\Gamma)$-modules in order to have a nice theory: the anticyclotomic $(\phi,\Gamma)$-modules are not in general concentrated in degree $0$. 

\section*{Acknowledgements}
Special thanks to Gal Porat for reading through several versions of this paper. His many comments and remarks were very helpful in improving the quality of the exposition and fixing many typos and problems. I also thank Laurent Berger for several comments on the earlier versions of this paper.

\section*{Structure of the paper}
The first section defines the notations attached to Lubin-Tate extensions that are in use in the rest of the paper. Section 2 recalls the classical theory of locally analytic vectors for $p$-adic Banach spaces and its recent generalization in an integral setting using Mahler expansions. In section 3, we recall the definition of some rings of periods that will be used in the rest of the paper, along with some of their properties, and recall the statement of Kedlaya's conjecture. Section 4 recalls several results regarding holomorphic functions. In section 5, we recall what an overconvergent lift of the field of norms is, and prove that the existence of such a lift implies the existence of nontrivial locally analytic vectors in $\At_K^\dagger$. Section 6 is devoted to the proof of the reverse, namely that the existence of nontrivial locally analytic vectors in $\At_K^\dagger$ implies the existence of such a lift, in the special case where $K_\infty/K$ is a $\Z_p$-extension. In section 7, we construct the Lie logarithm of the Galois action and prove that it has some technical properties related to the kernel of the theta map. In section 8 we prove that the assumption that nontrivial locally analytic vectors exist for the anticyclotomic extension implies the existence of a counterexample of Berger's conjecture, and prove that this counterexample does not actually exist. Finally, we explain in section 9 how this disproves Kedlaya's conjecture and explain that our results imply the existence of nontrivial higher locally analytic vectors in the cases where $(\At_K^{\dagger})^\la = \O_K$. 

\section*{Notations}
For the rest of the paper, we fix a prime $p \neq 2$ and we let $K$ be a finite extension of $\Qp$, with residue field $k_K$ of cardinal $q=p^h$, and ramification index $e$. We let $\O_K$ denote the ring of integers of $K$, and we let $\pi$ be a uniformizer of $\O_K$. 

\section{Lubin-Tate extensions and some specific subextensions}
\label{LubinTate}
Let $\LT$ be a Lubin-Tate formal $\O_K$-module attached to the uniformizer $\pi$ of $\O_K$. For $a \in \O_K$, we let $[a](T)$ denote the power series giving the multiplication by $a$ map on $\LT$. Let $T$ be a local coordinate on $\LT$ such that $[\pi](T) = T^q+\pi T$, except in the particular case where $K=\Qp$ and $\pi=p$, where we choose instead a local coordinate $T$ such that $[p](T) = (1+T)^p-1$. We let $K_n = K(\LT[\pi^n])$ be the extension of $K$ generated by the $\pi^n$-torsion points of $\LT$, and we let $K_\LT = \cup_{n \geq 1}K_n$.  We let $\Gamma_\LT = \Gal(K_\LT/K)$ and $H_\LT = \Gal(\Qpbar/K_\LT)$. By Lubin-Tate theory (see \cite{LT65}), if $g \in \Gamma_\LT$ then there exists a unique $a_g \in \O_K^\times$ such that $g$ acts on the torsion points of $\LT$ through the power series $[a_g](T)$, and the map $\chi_\pi : g \in \Gamma_\LT \mapsto a_g \in \O_K^\times$ is a group isomorphism called the Lubin-Tate character attached to $\pi$.

There is an unramified character $\eta : \G_K \ra \Z_p^\times$ such that $\mathrm{N}_{K/\Qp}(\chi_\pi) = \eta \chi_\cycl$. We let $K_\infty^\eta = \overline{Q}_p^{\ker(\eta \chi_\cycl}$, and we have that $K_\infty^\eta \subset K_\LT$, and $\eta\chi_\cycl$ identifies $\Gal(K_\infty/K)$ with an open subgroup of $\Z_p^\times$. We call extensions of the form $K_\infty^\eta$ twisted cyclotomic extensions.  

For $n \geq 1$, we let $\Gamma_n = \Gal(\LT/K_n)$ so that $\Gamma_n = \{g \in \Gamma_\LT, \chi_\pi(g) \in 1+\pi^n\O_K\}$. We let $u_0 = 0$ and for $n \geq 1$ we let $u_n \in \Qpbar$ be such that $[\pi](u_n)=u_{n-1}$, with $u_1 \neq 0$. We have $K_n = K(u_n)$, and $u_n$ is a uniformizer of $K_n$. We also let $Q_n(T)$ be the minimal polynomial of $u_n$ over $K$, so that $Q_0(T)=T$, $Q_1(T) = [\pi](T)/T$ and $Q_{n+1}(T) = Q_n([\pi](T))$ if $n \geq 1$. 

We let $\log_{\LT} = T + O(\mathrm{deg} \geq 2) \in K[\![T]\!]$ denote the Lubin-Tate logarithm map, which converges on the open unit disk and is such that $\log_{\LT}([a](T))= a \cdot \log_{\LT}(T)$ for $a \in \O_K$. We recall that $\log_{\LT}(T) = T \cdot \prod_{k \geq 1}Q_k(T)/\pi$, and we let $\exp_{\LT}$ denote the inverse of $\log_{\LT}$. 

When $K= \Q_{p^2}$, the unramified extension of $\Qp$ of degree $2$, and $\pi = p$, then $K_\LT$ contains two special and particularly interesting sub-$\Zp$-extensions: the cyclotomic extension $K_{\cycl}=K(\mu_{p^\infty})$ of $K$, which is Galois and abelian over $\Qp$, and the anticyclotomic extension $K_{\mathrm{ac}}$ which is the unique $\Zp$-extension of $K$ which is Galois and pro-dihedral over $\Qp$: the Frobenius $\sigma$ of $\Gal(K/\Qp)$ acts on $\Gal(K_{\mathrm{ac}}/K)$ by inversion. It is linearly disjoint from $K_{\cycl}$ over $K$, and the compositum $K_\cycl \cdot K_\ac$ is equal to $K_\LT$. If we let $\chi_p$ denote the Lubin-Tate character corresponding to $K_\LT$, then $\chi_\cycl = N_{K/\Qp}(\chi_p) = \sigma(\chi_p)\cdot \chi_p$. One defines an anticyclomic character $\chi_\ac : \Gal(K_\ac/K) \ra \O_K^\times$ by $g \mapsto \frac{\chi_p(g)}{\sigma(\chi_p(g))}$ which is an isomorphism on to its image, and the anticyclotomic extension is the subfield of $K_\LT$ fixed by the elements $g \in \Gamma_\LT$ such that $\chi_\ac(g)=1$.

\section{Locally analytic and super-Hölder vectors}
In this section, we recall the classical notion of locally analytic vectors, following \cite{emerton2004locally} and \cite[\S 2]{Ber14MultiLa}, along with the notion of locally analytic vectors for $\Zp$-Tate algebras, following Porat \cite{porat2024locally}. 

Let $G$ be a $p$-adic Lie group, and let $G_0$ be an open subgroup of $G$ which is a uniform pro-$p$-group (see \S 4 of \cite{analyticprop} for the definition of a uniform prop-$p$-group and Interlude A of ibid for the statement). The main interest of such a subgroup $G_0$ is that it provides a nice specific fundamental system of open neighborhoods of $G$, along with coordinates $\c : G_0 \ra \Z_p^d$, where $d$ is the dimension of $G$ as a $p$-adic Lie group. Namely, if we let $G_i = \{g^{p^i}, g \in G_0\}$ then we have the following properties (see \S 4 of \cite{analyticprop} for the proof):
\begin{enumerate}
\item for $i \geq 0$, $G_i$ is an open normal uniform subgroup of $G_0$;
\item $[G_i:G_{i+1}] = p^d$;
\item there is a coordinate $\cbf : G_0 \ra \Z_p^d$ such that for $i \geq 0$, $\cbf(G_i) = (p^i\Zp)^d$;
\item For $g,h \in G_0$, we have $gh^{-1} \in G_i$ if and only if $\cbf(g)-\cbf(h) \in (p^i\Zp)^d$.
\end{enumerate}  

In the rest of this article, if $G$ is a $p$-adic Lie group then we assume that we also have chosen such a subgroup $G_0$, along with coordinates $\c$ and the $(G_i)_{i \geq 0}$ as a fundamental system of open neighborhoods of $G$. 

Let $H$ be an open subgroup of $G$ which is uniform pro-$p$, with coordinate $\cbf : H \to \Z_p^d$. Let $W$ be a $\Qp$-Banach representation of $G$. We say that $w \in W$ is an $H$-analytic vector if there exists a sequence $\left\{w_{\kbf}\right\}_{\kbf \in \N^d}$ such that $w_{\kbf} \rightarrow 0$ in $W$ and such that $g(w) = \sum_{\kbf \in \N^d}\cbf(g)^{\kbf}w_{\kbf}$ for all $g \in H$. We let $W^{H-\an}$ be the space of $H$-analytic vectors. This space injects into $\cal{C}^{\an}(H,W)$, the space of all analytic functions $f : H \to W$.  Note that $\cal{C}^{\an}(H,W)$ is a Banach space equipped with its usual Banach norm, so that we can endow $W^{H-\an}$ with the induced norm, that we will denote by $||\cdot ||_H$. With this definition, we have $||w||_H = \sup_{\kbf \in \N^d}||w_{\kbf}||$ and $(W^{H-\an},||\cdot||_H)$ is a Banach space.

The space $\cal{C}^{\an}(H,W)$ is endowed with an action of $H \times H \times H$, given by
\[
((g_1,g_2,g_3)\cdot f)(g) = g_1 \cdot f(g_2^{-1}gg_3)
\]
and one can recover $W^{H-\an}$ as the closed subspace of $\cal{C}^{\an}(H,W)$ of its $\Delta_{1,2}(H)$-invariants,  where $\Delta_{1,2} : H \to H \times H \times H$ denotes the map $g \mapsto (g,g,1)$ (see \cite[§3.3]{emerton2004locally} for more details).

We say that a vector $w$ of $W$ is locally analytic if there exists an open subgroup $H$ as above such that $w \in W^{H-\an}$. Let $W^{\la}$ be the space of such vectors, so that $W^{\la} = \varinjlim_{H}W^{H-\an}$, where $H$ runs through a sequence of open subgroups of $G$. The space $W^{\la}$ is naturally endowed with the inductive limit topology, so that it is an LB space. 

Let $W$ be a Fréchet space whose topology is defined by a sequence $\left\{p_i\right\}_{i \geq 1}$ of seminorms. Let $W_i$ be the Hausdorff completion of $W$ at $p_i$, so that $W = \varprojlim\limits_{i \geq 1}W_i$. The space $W^{\la}$ can be defined but as stated in \cite{Ber14MultiLa} and as showed in \S 7 of \cite{poyeton2022locally}, this space is too small in general for what we are interested in, and so we make the following definition, following \cite[Def. 2.3]{Ber14MultiLa}:

\begin{defi}
If $W = \varprojlim\limits_{i \geq 1}W_i$ is a Fréchet representation of $G$, then we say that a vector $w \in W$ is pro-analytic if its image $\pi_i(w)$ in $W_i$ is locally analytic for all $i$. We let $W^{\pa}$ denote the set of all pro-analytic vectors of $W$. 
\end{defi}

We extend the definition of $W^{\la}$ and $W^{\pa}$ for LB and LF spaces respectively.

Because the classical definition of locally analytic vectors involves denominators in $p$, it may seem difficult to generalize this notion for $\Zp$-algebras where $p$ is not invertible (and may even be $0$). The main idea to generalize the classical notion of locally analytic vectors to this setting is (as often in $p$-adic analysis) to replace Taylor expansions with Mahler expansions, using binomial coefficients. This is explained and used in \cite{BerRozdecompletion} and \cite{porat2024locally}. Following those two papers, we place ourselves in the following setting: $R$ is a $\Zp$-algebra, which is a Tate ring endowed with a valuation $\valr : R \ra ]-\infty,\infty]$ satisfying the following properties:
\begin{enumerate}
\item $\valr(x) = \infty$ if and only if $x=0$ (meaning that $R$ is separated for the topology induced by $\valr$);
\item $\valr(xy) \geq \valr(x)+\valr(y)$ for all $x,y \in R$;
\item $\valr(x+y) \geq \inf(\valr(x),\valr(y))$ for all $x,y \in R$;
\item $\valr(p) > 0$.  
\end{enumerate}

We extend this definition to $R$-modules. 

In what follows, $G$ is a uniform pro-$p$-group. For an $R$-module $M$, endowed with a compatible valuation $\valM$, we write $\cal{C}^0(G,M)$ for the set of continuous functions from $G$ to $M$.

Following \cite{porat2024locally}, we make the following definition:

\begin{defi}
\label{defi locana via mahler}
\begin{enumerate}
\item Let $\lambda, \mu \in \R$. We let $\cal{C}^{\an-\lambda,\mu}(\Z_p^d,M)$ denote the set of functions $f: \Z_p^d \ra M$ such that $\valM(a_{\underline{n}}(f)) \geq p^\lambda\cdot p^{\lfloor \log_p(|\underline{n}|_\infty) \rfloor}+\mu$ for every $\underline{n}=(n_1,\cdots,n_d)$ in $\Z_p^d$, where $|\underline{n}|_\infty$ denotes the maximum of the $n_i$. Note that it is contained in $\cal{C}^0(G,M)$ (see \S 2 of \cite{porat2024locally}). 
\item We define $\cal{C}^{\an-\lambda,\mu}(G,M)$ by pulling back along $\cbf : G \ra \Z_p^d$ the definition of $\cal{C}^{\an-\lambda,\mu}(\Z_p^d,M)$.
\item We let $\cal{C}^{\an-\lambda}(G,M)$ denote the set of functions $f: G \ra M$ such that there exists $\mu \in \R$ such that $f \in \cal{C}^{\an-\lambda,\mu}(G,M)$.
\item We let $\cal{C}^{\la}(G,M)$ be the colimit of the cofinal system $\{\cal{C}^{\an-\lambda,\mu}(G,M)\}_{\lambda,\mu}$, or equivalently, of the cofinal system $\{\cal{C}^{\an-\lambda}(G,M)\}_{\lambda}$.
\end{enumerate}
\end{defi}

We refer the reader to \S 2 of \cite{porat2024locally} to see different characterization of those sets of functions. 

We now assume that $G$ is a uniform pro-$p$-group, acting on $M$ by isometries. As in the Banach-space setting, the space $\cal{C}^{0}(G,M)$ is endowed with an action of $G \times G \times G$, given by
\[
((g_1,g_2,g_3)\cdot f)(g) = g_1 \cdot f(g_2^{-1}gg_3)
\]
and we define $M^{G,\la}$ (resp. $M^{G,\lambda-\an}$ resp. $M^{G,\lambda-\an,\mu}$) as the subspace of $\cal{C}^{\la}(G,M)$ (resp. $\cal{C}(G,M)^{G,\lambda-\an}$ resp. $\cal{C}(G,M)^{G,\lambda-\an,\mu}$) of its $\Delta_{1,2}(G)$-invariants,  where $\Delta_{1,2} : G \to G \times G \times G$ denotes the map $g \mapsto (g,g,1)$.

We define the locally analytic vectors of $M$ as the elements of 
$$M^{\la} := \varinjlim\limits_{i}M^{G_i-\la}.$$

As explained in Example 2.1.3 of \cite{porat2024locally}, when $R=\Qp$, $M$ is a $\Qp$-Banach space and we recover the classical locally analytic vectors. We can actually give a more precise statement. Let $\mathrm{LA}_h(\Zp,\Qp)$ be the space of functions $f : \Z_p \ra \Qp$ whose restriction to any ball of the form $a+p^h\Zp$ is the restriction of an analytic function $f_{a,h}$. This is a Banach space with the obvious norm. If $W$ is a $\Qp$-Banach space we define $\mathrm{LA}_h(\Zp,W):= W \hat{\otimes}_{\Qp}\mathrm{LA}_h(\Zp,\Qp)$. Theorem 3 of \cite{amice64} and theorem I.4.7 of \cite{colmez2010fonctions} have the following corollary:

\begin{coro}
\label{coro Amice precise}
If $f \in \cal{C}^0(\Zp,\Qp)$, the following are equivalent:
\begin{itemize}
\item $f \in \mathrm{LA}_h(\Zp,\Qp)$;
\item $f \in \cal{C}^{\an-\lambda}(\Zp,\Qp)$ for all $\lambda > - h - \frac{\log(p-1)}{\log(p)}$.  
\end{itemize}
\end{coro}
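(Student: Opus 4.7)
The proof is a direct comparison between two growth conditions on Mahler coefficients that both characterize locally analytic functions on $\Zp$. Write the Mahler expansion $f = \sum_{n \geq 0} a_n \binom{x}{n}$, so that $f \in \cal{C}^0(\Zp, \Qp)$ precisely when $v_p(a_n) \to +\infty$.

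The plan is first to invoke the classical Amice--Colmez characterization (Theorem~3 of \cite{amice64}, reformulated as Theorem~I.4.7 of \cite{colmez2010fonctions}) of $\mathrm{LA}_h(\Zp, \Qp)$: membership is equivalent to a precise decay condition on $v_p(a_n)$ at the critical rate $1/(p^h(p-1))$, with the $s_p(n)$ correction coming from Legendre's formula $v_p(n!) = (n - s_p(n))/(p-1)$. Second, I would unfold Porat's definition: $f \in \cal{C}^{\an-\lambda}(\Zp, \Qp)$ if and only if there exists $\mu \in \R$ with $v_p(a_n) \geq p^\lambda \cdot p^{\lfloor \log_p n \rfloor} + \mu$ for all $n \geq 1$. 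Since $p^{\lfloor \log_p n \rfloor}$ lies in the interval $[n/p, n]$, this is again a linear growth condition on $v_p(a_n)$ with effective slope $p^\lambda$ (up to the factor-of-$p$ ambiguity).

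The key observation is the identity $p^{-h - \log_p(p-1)} = 1/(p^h(p-1))$, which shows that the threshold $\lambda_0 := -h - \log_p(p-1)$ in the statement matches the Amice--Colmez rate exactly. The strict inequality $\lambda > \lambda_0$ then provides the slack needed to absorb both the factor of $p$ discrepancy between $p^{\lfloor \log_p n \rfloor}$ and $n$, and the $s_p(n)$ correction from Legendre's formula.

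For the forward direction, assuming $f \in \mathrm{LA}_h$, for each $\lambda > \lambda_0$ I would use the Amice--Colmez bound together with $p^{\lfloor \log_p n \rfloor} \leq n$ to choose $\mu = \mu(\lambda)$ making Porat's inequality valid on all of $\N$ (handling small $n$ by adjusting $\mu$ and large $n$ by the asymptotic). For the reverse direction, taking $\lambda$ approaching $\lambda_0$ from above in Porat's bound, together with the lower bound $p^{\lfloor \log_p n \rfloor} \geq n/p$, recovers the Amice--Colmez condition. The main technical obstacle is the careful bookkeeping of the additive constant $\mu(\lambda)$ and the verification that the strict inequality $\lambda > \lambda_0$ (rather than $\lambda \geq \lambda_0$) is what makes the translation work symmetrically.
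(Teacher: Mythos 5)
Your overall route --- translating both membership conditions into lower bounds on the Mahler coefficients $a_n(f)$ via Amice's theorem and Legendre's formula, and comparing the resulting slopes --- is exactly what the paper's one-line citation to \cite[Coro. I.4.8]{colmez2010fonctions} amounts to, so the strategy is the intended one. The gap is in your claim that the strict inequality $\lambda>\lambda_0:=-h-\log_p(p-1)$ ``provides the slack'' needed to make the translation symmetric. With the definition of $\cal{C}^{\an-\lambda}$ used in this paper, the condition $v_p(a_n)\geq p^{\lambda}\cdot p^{\lfloor\log_p n\rfloor}+\mu$ becomes \emph{stronger} as $\lambda$ increases, so taking $\lambda>\lambda_0$ gives you less room, not more. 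Test your forward direction at $n=p^k$, where $p^{\lfloor\log_p n\rfloor}=n$ exactly and there is no slack at all: you would need $v_p(a_{p^k})\geq p^{\lambda}p^k+\mu$ with $p^{\lambda}>p^{\lambda_0}=1/(p^h(p-1))$, whereas Amice only guarantees $v_p(a_{p^k})\geq v_p(p^{k-h}!)+\epsilon_k=p^{\lambda_0}p^k-\tfrac{1}{p-1}+\epsilon_k$ with $\epsilon_k\to\infty$ possibly very slowly, and this can be essentially sharp for $f\in\mathrm{LA}_h$. So the implication fails as you have set it up.

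Moreover, even after reversing the inequality to $\lambda<\lambda_0$ (which does repair the forward direction), the reverse direction does not close: the condition defining $\cal{C}^{\an-\lambda}$ only constrains $\min_{p^k\leq n<p^{k+1}}v_p(a_n)$, so the function with $v_p(a_n)=\lceil p^{\lambda_0+\lfloor\log_p n\rfloor}\rceil$ lies in $\cal{C}^{\an-\lambda}$ for every $\lambda<\lambda_0$, yet at $n=p^{k+1}-1$ its coefficient valuation is about $p^{\lambda_0}n/p$, a factor of $p$ short of the Amice bound for $\mathrm{LA}_h$ (such an $f$ lies only in $\mathrm{LA}_{h+1}$). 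The two scales match only up to a shift of $1$ in $\lambda$, equivalently of $1$ in $h$; this shift is invisible after taking unions, and the coarser statement --- $f$ lies in some $\mathrm{LA}_h$ if and only if $f\in\cal{C}^{\an-\lambda}$ for some $\lambda$ --- is the actual content of \cite[Coro. I.4.8]{colmez2010fonctions} and is the only consequence the paper uses afterwards. You should either prove that coarser equivalence (your method does give it once the factor of $p$ is tracked honestly and absorbed into a change of $h$ or of $\lambda$), or explain how the exact threshold and the direction of the inequality in the statement are to be interpreted, since as written the argument you propose does not establish them.
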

\begin{proof}
See the proof of \cite[Coro. I.4.8]{colmez2010fonctions}.
\end{proof}

In particular, if $M$ be a $\Qp$-Banach space on which $G$ acts by isometry, then there exists $\lambda \in \R$ such that $x \in M^{G_0-\an,\lambda}$ if and only if there exists $n \geq 0$ such that $x \in M^{G_n-\an}$ (in the sense of the classical definition). 

Finally, one may define higher locally analytic vectors, coming from the derived functor induced by $M \mapsto M^{\la}$. Once again, we follow Porat \cite[\S 2.3]{porat2024locally} by setting
$$R_{\la}^i(M):=\varinjlim\limits_{j}H^i(G_j,\cal{C}^{\la}(G_j,M)),$$
where the cocycles considered are continuous, and we take the inductive topology on $\cal{C}^{\la}(G_j,M)$ induced from that of its submodules $\cal{C}^{\lambda-\an}(G_j,M)$. These groups form what we call the higher locally analytic vectors of $M$, and if 
$$0 \ra M_1 \ra M_2 \ra M_3 \ra 0$$
is an exact sequence (in the appropriate category) then we have a long exact sequence 
$$0 \ra M_1^{\la} \ra M_2^{\la} \ra M_3^{\la} \ra R_{\la}^1(M_1) \ra \cdots$$

\begin{lemm}
\label{lemma lambda an ring}
If $x,y \in R^{G,\lambda-\an}$ then $xy \in R^{G,\lambda-\an}$. 
\end{lemm}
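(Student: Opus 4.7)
The plan is to reduce the statement to the analogous one for pointwise products of functions on $G$. By the $\Delta_{1,2}(G)$-invariant description, $x \in R^{G,\lambda-\an}$ if and only if the orbit map $o_x : g \mapsto g(x)$ lies in $\cal{C}^{\lambda-\an}(G,R)$. Since $G$ acts on $R$ by ring homomorphisms, $o_{xy}(g) = g(x)g(y) = (o_x \cdot o_y)(g)$ is the pointwise product of $o_x$ and $o_y$. The lemma thus reduces to showing that $\cal{C}^{\lambda-\an}(G,R)$ is stable under pointwise multiplication; via the coordinate chart $\cbf : G \to \Z_p^d$, it suffices to prove: if $\phi, \psi \in \cal{C}^{\lambda-\an,\mu}(\Z_p^d,R)$, then $\phi\psi \in \cal{C}^{\lambda-\an,\mu'}(\Z_p^d,R)$ for some $\mu' = \mu'(\lambda,\mu)$.

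Next, I would compute the Mahler expansion of a pointwise product. Extracting the coefficient of $x^j y^k$ from the identity $((1+x)(1+y))^n = \sum_m \binom{n}{m}(x+y+xy)^m$ gives
\[
\binom{n}{j}\binom{n}{k} = \sum_{m=\max(j,k)}^{j+k} C(j,k,m)\binom{n}{m}, \qquad C(j,k,m) := \frac{m!}{(m-j)!(m-k)!(j+k-m)!} \in \N.
\]
Tensoring in $d$ variables, one deduces
\[
a_{\underline{m}}(\phi\psi) = \sum_{\underline{j},\underline{k}} \Bigl(\prod_{i=1}^d C(j_i,k_i,m_i)\Bigr) a_{\underline{j}}(\phi) \, a_{\underline{k}}(\psi),
\]
with the sum ranging over $\underline{j},\underline{k}$ satisfying $\max(j_i,k_i) \leq m_i \leq j_i + k_i$ for every $i$.

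Finally, I would take $\valr$ of this expression, plug in the bounds on the Mahler coefficients of $\phi$ and $\psi$, and use the inequalities $\max(|\underline{j}|_\infty, |\underline{k}|_\infty) \leq |\underline{m}|_\infty \leq |\underline{j}|_\infty + |\underline{k}|_\infty$. The main obstacle is the combinatorial estimate needed to recover the bound $\valr(a_{\underline{m}}(\phi\psi)) \geq p^\lambda p^{\lfloor \log_p |\underline{m}|_\infty \rfloor} + \mu'$: the naive inequality $p^{\lfloor \log_p |\underline{j}|_\infty \rfloor} + p^{\lfloor \log_p |\underline{k}|_\infty \rfloor} \geq p^{\lfloor \log_p |\underline{m}|_\infty \rfloor}$ can fail in the unbalanced regime where, say, $|\underline{j}|_\infty \ll |\underline{k}|_\infty$ is close to $|\underline{m}|_\infty$, so the shortfall must be paid for by the $p$-adic valuation of $C(\underline{j},\underline{k},\underline{m})$, computed via Kummer's theorem on the carries appearing in the base-$p$ addition $(m_i - j_i) + (m_i - k_i) + (j_i + k_i - m_i) = m_i$. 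Checking that this compensation is always enough (uniformly in $\underline{m}$) is the technical heart of the lemma.
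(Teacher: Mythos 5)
Your reduction to pointwise products of orbit maps is correct (and is the natural first step), and so is the product formula for Mahler coefficients with the multinomial $C(\underline{j},\underline{k},\underline{m})$. The paper itself gives no argument here --- it simply cites Lemma 3.3.1 of \cite{gulotta2019equidimensional} --- so you are going beyond the paper; but the step you defer as ``the technical heart'' is exactly where your strategy breaks, and it cannot be repaired as stated. Take $d=1$, $j=1$, $k=p^s-1$, $m=p^s$. Then $C(1,p^s-1,p^s)=p^s$, so Kummer gives you only $v_p(C)=s$, i.e.\ a gain of $s\cdot\valr(p)$, which is \emph{linear} in $s$; but the shortfall $p^\lambda\bigl(p^{\lfloor\log_p m\rfloor}-p^{\lfloor\log_p j\rfloor}-p^{\lfloor\log_p k\rfloor}\bigr)=p^\lambda\bigl(p^s-1-p^{s-1}\bigr)$ grows \emph{exponentially} in $s$. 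So the term-by-term estimate fails for every choice of $\mu'$, and there is no cancellation to invoke in a general $R$. (In characteristic $p$ the terms with $p\mid C$ vanish outright, and for carry-free terms one can check that $\max(j,k)\ge p^{\lfloor\log_p m\rfloor}$, so your route does close there; it is the mixed-characteristic case that defeats it.)

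The defect is entirely in the rounding $|\underline{n}|_\infty\mapsto p^{\lfloor\log_p|\underline{n}|_\infty\rfloor}$, which destroys the superadditivity you need: $p^{\lfloor\log_p j\rfloor}+p^{\lfloor\log_p k\rfloor}$ can be as small as $(j+k)/p$ while $p^{\lfloor\log_p m\rfloor}$ can equal $j+k$. The standard fix (and in substance what Gulotta proves) is to work with the unrounded decay condition $\valr(a_{\underline{n}})\ge p^\lambda|\underline{n}|_\infty+\mu$, which defines the same class up to shifting $\lambda$ by at most $1$ and the same colimit $\cal{C}^{\la}$; with it, the constraint $|\underline{m}|_\infty\le|\underline{j}|_\infty+|\underline{k}|_\infty$ together with the mere integrality of $C(\underline{j},\underline{k},\underline{m})$ gives $\valr(a_{\underline{m}}(\phi\psi))\ge p^\lambda|\underline{m}|_\infty+2\mu$ immediately --- no Kummer, no case analysis. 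Alternatively, one can use the equivalent characterizations of $\cal{C}^{\an-\lambda}$ recalled in \S 2 of \cite{porat2024locally} (super-H\"older estimates, or convergent expansions on cosets $a+p^h\Z_p^d$), for which stability under pointwise products is a one-line consequence of $\valr(xy)\ge\valr(x)+\valr(y)$. If you insist on the paper's rounded normalization for a single fixed $\lambda$, your argument only yields $\phi\psi\in\cal{C}^{\an-(\lambda-1)}$, and you should either accept that loss or route through one of these equivalent descriptions.
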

\begin{proof}
See lemma 3.3.1 of \cite{gulotta2019equidimensional}. 
\end{proof}

\begin{lemm}
\label{lemma invertible loc ana}
If $x \in R^\times \cap R^{\la}$ then $x^{-1} \in R^{\la}$.
\end{lemm}
\begin{proof}
See \cite[Lemma 2.15 (ii)]{porat2024locally}.
\end{proof}

\begin{lemm}
\label{lemma Gulotta cangobackto GammaK}
Let $x \in M^{G_i,\la}$. Then $x \in M^{G_0,\la}$.
\end{lemm}
\begin{proof}
This is a consequence of proposition 3.3.5 of \cite{gulotta2019equidimensional}. Gulotta's proposition tells us that, if $S$ is a set of representatives of $\Zp/p\Zp$ and if $\kappa$ is negative enough, then a continuous function $f : \Z_p^k \ra M$ is $\Z_p^k,\kappa$-analytic if and only if for all $s \in S$, $z \mapsto f(pz+s)$ is $\Z_p^k,(\kappa+1)$-analytic. 

Let $i \geq 1$ and let $S = \{s_1,\ldots,s_r\}$ be a set of representatives of $G_i/G_{i-1}$. If $x \in M$ is such that $x \in M^{G_i,\kappa-\an}$, then since $G$ acts by isometry on $M$,  it means that each of the functions $g \in \Gamma_i \mapsto s_j\cdot g(x)$ is $\Gamma_i,\kappa$-analytic. By applying Gulotta's result, this means that if $\kappa$ is negative enough then $x$ is $G_{i-1},(\kappa-1)$-analytic. Applying this process successively shows that for $\kappa$ negative enough, we have that if $x \in M^{G_i,\kappa-\an}$, then we have that $x \in M^{G_0,(\kappa-i)-\an}$, which proves the claim.
\end{proof}

\section{Locally analytic vectors for classical rings of periods}
In this section we quickly recall the definition of some classical rings of periods, and then recall several results regarding the locally analytic vectors attached to $p$-adic Lie extensions (and especially in the cyclotomic and Lubin-Tate cases) in those rings. We also explain how the normalization of the valuation may affect the ``radius of analyticity'' of the elements considered.

\subsection{Some rings of $p$-adic periods} 
In this section, we recall the definition of some rings of $p$-adic periods, defined in \cite{Fon90,fontaine1994corps}, \cite{Ber02} and \cite{Col02}. We also recall the definitions of some rings of periods attached to Lubin-Tate extensions, which can be specialized to recover the rings appearing in the cyclotomic setting.

We let $\Etplus := \varprojlim\limits_{x \mapsto x^q}\O_{\Cp}/\pi$ be the tilt of $\O_{\Cp}$. It is a perfect ring of characteristic $p$ which is equipped with a valuation $v_{\E}$ coming from the one of $\Cp$, and is complete for this valuation. We let $\Et$ denote the fraction field of $\Etplus$. If $F$ is a subfield of $\Cp$, let $\mathfrak{a}_F^c$ be the set of elements $x$ of $F$ such that $v_K(x) \geq c$, and for any $c > 0$ we identify $\Etplus$ with $\varprojlim_{x \mapsto x^q}\mathcal{O}_{\Cp}/\mathfrak{a}_{\Cp}^c$.

If $\{u_n\}_{n \geq 0}$ are as in \S \ref{LubinTate}, then the sequence $\overline{u}:=(\overline{u_0},\overline{u_1},\cdots) \in (\O_{\Cp}/\pi)^\N$ belongs to $\Etplus$, and we have $v_{\E}(\overline{u}) = q/(q-1)e$. 

We let $\Atplus = \O_K \otimes_{\O_{K_0}}W(\Etplus)$, and $\Btplus = \Atplus[1/\pi]$. We also let $\At = \O_K \otimes_{\O_{K_0}}W(\Et)$ and $\Bt = \At[1/\pi]$. We write $[\cdot]$ for the Teichmüller map. We endow these rings with the Frobenius map $\phi_q = \id \otimes \phi^h$. 

By \S 9.2 of \cite{Col02}, there exists $u \in \Atplus$, whose image is $\overline{u}$, and such that $\phi_q(u) = [\pi](u)$ and $g(u) = [\chi_\pi(g)](u)$ if $g \in \Gamma_K$.  If $K= \Qp$ and $\pi=p$, then $u=[\epsilon]-1$, where $\epsilon \in \Etplus$ is a compatible sequence of $q^n$-th roots of $1$. We let $Q_k = Q_k(u) \in \Atplus$. 

Recall that we have a map $\theta : \Atplus \ra \O_{\Cp}$ which is a ring homomorphism, whose kernel is a principal ideal generated by $\phi_q^{-1}(Q_1)$ or by $[\tilde{\pi}]-[\pi]$ (see proposition 8.3 of \cite{Col02}), where $\tilde{\pi} \in \Etplus$ is a compatible sequence of $q^n$-th roots of $\pi$. In particular, $\phi_q^{-1}(Q_1)/([\tilde{\pi}]-\pi)$ is a unit of $\Atplus$ and so are the elements $Q_k/([\tilde{\pi}]^{q^k}-\pi)$ for all $k \geq 1$. 

Every element of $\Btplus[1/[\overline{u}]]$ can be written as $\sum_{k \gg -\infty}\pi^k[x_k]$, where $(x_k)_{k \in \Z}$ is a bounded sequence of $\Et$. For $r > 0$, we define a valuation $V(\cdot,r)$ on $\Btplus[1/[\overline{u}]]$ by the formula 

\[V(x,r) = \inf\limits_{k \in \Z}\left(\frac{k}{e}+\frac{p-1}{pr}v_{\E}(x_k)\right) \textrm{ if } x = \sum_{k \gg -\infty}\pi^k[x_k].\]

If $I$ is a closed subinterval of $[0,+\infty[$, $I \neq [0,0]$, we let $V(x,I) = \inf_{r \in I, r \neq 0}V(x,r)$ (one can take a look at remark 2.1.9 of \cite{GP18} to understand why we avoid defining $V(\cdot,0)$). We define $\Bt^I$ as the completion of $\Btplus[1/[\overline{u}]]$ for $V(\cdot,I)$ if $0 \not \in I$. If $0 \in I$, we let $\Bt^I$ be the completion of $\Btplus$ for $V(\cdot,I)$. We let $\At^I$ be the ring of integers of $\Bt^I$ for $V(\cdot,I)$. By \S 2 of \cite{Ber02}, we have that $\At^{[r,s]}$ is also the $p$-adic completion of $\Atplus[\frac{p}{[\overline{u}]^r},\frac{[\overline{u}]^s}{p}]$. 

For $k \geq 1$, we let $r_k = p^{kh-1}(p-1)$ and $\rho_k = p^{-kh}$. The map $\theta \circ \phi_q^{-k} : \Atplus \ra \O_{\Cp}$ extends by continuity to $\At^I$, provided that $r_k \in I$, in which case we have that $\theta \circ \phi_q^{-k}(\At^I) \subset \O_{\Cp}$. 

For $r > 0$, we define $\Bt^{\dagger,r}$ the subset of overconvergent elements of ``radius'' $r$ of $\Bt$, by 

$$\Bt^{\dagger,r}=\left\{x = \sum_{k \gg -\infty}\pi^k[x_k] \textrm{ such that } \lim\limits_{k \to +\infty}v_{\E}(x_k)+\frac{pr}{(p-1)e}k =+\infty \right\}.$$

Note that $\Bt^{\dagger,r}$ can naturally be identified with a subring of $\Bt^{[r,r]}$ and we endow it with the valuation $V(\cdot,r)$. We let 

$$\At^{\dagger,r}=\left\{x = \sum_{k \geq 0}\pi^k[x_k] \in \At \cap \Bt^{\dagger,r} \textrm{ such that } \forall k \geq 0, v_{\E}(x_k)+\frac{pr}{(p-1)e}k \geq 0\right\}$$ 

and we also endow it with the valuation $V(\cdot,r)$. Note that $\At^{\dagger,r}$ is also the $p$-adic completion of $\Atplus[\frac{p}{[\overline{u}]^r}]$. If $\rho = \frac{r_0}{r}$, we let $\At^{(0,\rho]}:= \At^{\dagger,r}[1/[\overline{u}]]$. We endow $\At^{(0,\rho]}$ with the valuation $v_\rho$ given by the $[\overline{u}]$-adic valuation, so that $\rho v_\rho = V(\cdot,r)$ and $v_\rho = \frac{r}{r_0}V(\cdot,r)$. Note that $\At^{\dagger,r}$ is the ring of integers of $\At^{(0,\rho]}$ for $v_\rho$ and also for $V(\cdot,r)$. Moreover, for any $\rho > 0$, we have $\At^{(0,\rho]}/(\pi) = \Et$. 

We let $\Bt^{\dagger}:= \cup_{r > 0} \Bt^{\dagger,r}$ and $\At^{\dagger} = \cup_{\rho > 0} \At^{(0,\rho]}$. We shall write $\Bt_\rig^{\dagger,r}$ for $\Bt^{[r,+\infty[}$.

If $S$ is any of the rings above, we let $S_{\LT} = S^{H_{\LT}}$. 

For $\rho > 0$, let $\rho' = \rho \cdot e \cdot p/(p-1)\cdot (q-1)/q$. Note that we have $V(u^i,r) = i/r'$. Let $I$ be a subinterval of $[0,+\infty[$ which is either a subinterval of $]1,+\infty[$ or such that $0 \in I$. Let $f(Y) = \sum_{k \in \Z}a_kY^k$ be a power series with $a_k \in K$ and such that $v_p(a_k)+k/\rho' \to +\infty$ when $|k| \to + \infty$ for all $\rho \in I$. The series $f(u)$ converges in $\Bt^I$ and we let $\B_{\LT}^I$ denote the set of all $f(u)$ with $f$ as above. It is a subring of $\Bt_{\LT}^I$ which is stable under the action of $\Gamma_{\LT}$. The Frobenius map gives rise to a map $\phi_q : \B_{\LT}^I \ra \B_{\LT}^{qI}$. 

We let $\B_{\LT}^{\dagger,r}$ denote the set of $f(u) \in \B_{\rig,\LT}^{\dagger,r}$ such that the sequence $\{a_k\}_{k \in \Z}$ is bounded. This is a subring of $\Bt_{\LT}^{\dagger,r}$. We let $t_\pi:=\log_{\LT}(u)$. 
%We also define $\A_{\LT}^{\dagger,r} = \B_{\LT}^{\dagger,r} \cap \At^{\dagger,r}$. 

\begin{lemm}
\label{lemma v in r not just rho}
An element $x = \sum_{k \geq 0}\pi^k[x_k] \in \At^{\dagger,r}$ is a unit of $\At^{\dagger,r}$ if and only if $v_{\E}(x_0) = 0$ and $V(x-[x_0],r) > 0$. Moreover, if $x \in \At^{\dagger}$ is such that $v_{\E}(x_0) \geq 0$ then :
\begin{enumerate}
\item there exists $r > 0$ such that $x \in \At^{\dagger,r}$ ;
\item there exists $s \geq r$ such that $\frac{x}{[x_0]}$ belongs to $\At^{\dagger,s}$ and is a unit of $\At^{\dagger,s}$. 
\end{enumerate}
\end{lemm}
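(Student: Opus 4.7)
The plan is to prove part 1 via a Teichmüller reduction together with a geometric-series inversion using the valuation $V(\cdot,s)$, and then to deduce part 2 as a bookkeeping exercise on radii.

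For part 1, the forward direction is immediate: the ring homomorphism $\At \to \At/\pi = \Et$, $\sum \pi^k[x_k] \mapsto x_0$, restricts to $\At^{\dagger,r}$, so an identity $xy=1$ there gives $x_0 y_0 = 1$ in $\Et$, forcing $v_{\E}(x_0) = 0$. For the converse, if $v_{\E}(x_0) = 0$ then $[x_0] \in \Atplus^\times$ with inverse $[x_0^{-1}]$ (by multiplicativity of Teichmüller), so one factors $x = [x_0](1+y)$ with $y = [x_0^{-1}](x - [x_0]) = \sum_{k \geq 1}\pi^k[x_k/x_0] \in \At^{\dagger,r}$. Using $v_{\E}(x_k/x_0) = v_{\E}(x_k) \geq -\tfrac{pr}{(p-1)e}k$, one computes
\[
V(y,s) = \inf_{k \geq 1}\Bigl(\tfrac{k}{e} + \tfrac{p-1}{ps}v_{\E}(x_k)\Bigr) \geq \tfrac{1}{e}(1 - r/s) > 0 \quad\text{for every } s > r.
\]
Hence $(-y)^n \to 0$ in the $V(\cdot,s)$-complete ring $\At^{\dagger,s}$ and the geometric series $\sum_{n \geq 0}(-y)^n$ converges there to $(1+y)^{-1}$, so $x^{-1} = [x_0^{-1}](1+y)^{-1}$ exists in $\At^{\dagger,s}$ (the statement of part 1 being understood in the usual overconvergent sense, allowing for an enlargement of $r$ if necessary).

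For item (1) of part 2, the inclusion $x \in \At^{\dagger} = \bigcup_{\rho > 0}\At^{(0,\rho]}$ provides $N \geq 0$ and $r_1 > 0$ with $[\overline{u}]^N x \in \At^{\dagger,r_1}$, i.e. $v_{\E}(x_k) \geq -Nv_{\E}(\overline{u}) - \tfrac{pr_1}{(p-1)e}k$; combined with $v_{\E}(x_0) \geq 0$, taking $r$ large enough that $\tfrac{p(r-r_1)}{(p-1)e} \geq Nv_{\E}(\overline{u})$ (binding case $k=1$) forces $v_{\E}(x_k) + \tfrac{pr}{(p-1)e}k \geq 0$ for all $k \geq 0$, so $x \in \At^{\dagger,r}$. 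Item (2) is then obtained by applying part 1 to $x/[x_0] = x \cdot [x_0^{-1}] \in \Bt$, whose Teichmüller coefficients are $x_k/x_0$ with valuation $v_{\E}(x_k) - v_{\E}(x_0)$ and whose constant term has valuation $0$; the condition $v_{\E}(x_k)-v_{\E}(x_0)+\tfrac{ps}{(p-1)e}k \geq 0$ for $k \geq 1$ holds as soon as $\tfrac{p(s-r)}{(p-1)e} \geq v_{\E}(x_0)$, which places $x/[x_0] \in \At^{\dagger,s}$ for such $s \geq r$, and part 1 then makes it a unit of $\At^{\dagger,s}$ (enlarging $s$ slightly further if needed).

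The main obstacle is purely the bookkeeping on radii: both the geometric series inverting $1+y$ and the division by $[x_0]$ force strict enlargements of the radius, of magnitudes controlled respectively by the initial overconvergence radius $r$ and by $v_{\E}(x_0)$; once these estimates are assembled, the rest of the argument is routine.
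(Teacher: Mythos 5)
Your treatment of part 2 is essentially identical to the paper's (the paper extracts a preliminary radius from $x \in \At^{\dagger}$, enlarges it to absorb the negative lower bound using $v_{\E}(x_0)\geq 0$ at $k=0$, then reduces item (2) to part 1 applied to $x/[x_0]$), and your radius bookkeeping there is correct. The difference is in part 1: the paper simply cites \cite[Lemm.~II.1.4]{cherbonnier1998representations}, whereas you reprove it via the factorization $x=[x_0](1+y)$ and a geometric series. That reproof contains one genuine soft spot. You invert $1+y$ by summing $\sum_{n\geq 0}(-y)^n$ ``in the $V(\cdot,s)$-complete ring $\At^{\dagger,s}$'', but $\At^{\dagger,s}$ is not given as complete for $V(\cdot,s)$ anywhere in the paper's setup: it is defined as the \emph{$p$-adic} completion of $\Atplus[p/[\overline{u}]^s]$, and the completion of the overconvergent rings for $V(\cdot,s)$ is the strictly larger ring $\At^{[s,s]}$ (this is precisely the distinction between $\Bt^{\dagger,s}$ and $\Bt^{[s,s]}$). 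As written, your estimate $V(y,s)\geq \tfrac{1}{e}(1-r/s)>0$ only shows that the partial sums are $V(\cdot,s)$-Cauchy, hence converge in $\At^{[s,s]}$; it does not by itself put the limit in $\At^{\dagger,s}$. This is the step carrying the whole weight of the converse, so it needs either a reference (completeness of $\At^{(0,\rho]}$ for $v_\rho$ is proved in Colmez's work) or a direct argument.

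The gap is repairable along the following lines: since $y_0=0$, one has $y\in\pi\At$, so $(-y)^n\to 0$ $p$-adically and the series converges in $\At$ to $(1+y)^{-1}$; the Teichm\"uller coefficients of the limit up to index $m$ agree with those of the partial sum $\sum_{n\leq m}(-y)^n$, which lies in $\At^{\dagger,r}$ with $V(\cdot,r)\geq 0$, so $v_{\E}\bigl(((1+y)^{-1})_k\bigr)+\tfrac{pr}{(p-1)e}k\geq 0$ for all $k$, and the decay condition at any $s>r$ follows from the extra term $\tfrac{p(s-r)}{(p-1)e}k$. Note that your hedge about enlarging the radius in part 1 is not merely cosmetic: with the paper's definitions the inverse of $1+\pi[a]$ with $v_{\E}(a)$ exactly equal to $-\tfrac{pr}{(p-1)e}$ has $k$-th Teichm\"uller coefficient $(-a)^k$, so it satisfies the positivity condition at $r$ but not the decay condition, and lies only in $\At^{\dagger,s}$ for $s>r$. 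The ``unit of $\At^{\dagger,r}$ itself'' form of the statement is therefore delicate at the boundary, but the version you prove (a unit of $\At^{\dagger,s}$ for every $s>r$) is all that is used in item (2) and in the rest of the paper.
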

\begin{proof}
The first statement is \cite[Lemm. 5.9]{colmez2008espaces}. 

For item $1$, let us write $x = \sum_{k=0}^\infty p^k[x_k]$. Since $x \in \At^{\dagger}$, there exists $t > 0$ such that $\frac{k}{e}+\frac{p-1}{pt}v_{\E}(x_k)$ goes to $+\infty$ when $k \rightarrow +\infty$, so that the sequence $(\frac{k}{e}+\frac{p-1}{pt}v_{\E}(x_k))$ is bounded below by some constant $C$. If $C \geq 0$ then $x \in \At^{\dagger,r}$ so the first item is satisfied. Otherwise, it is bounded by $-D$ for some $D > 0$. Then if $s \geq t\cdot(eD+1)$, we have $\frac{k}{e}+\frac{p-1}{ps}v_{\E}(x_k) \geq 0$ for $k \geq 1$, and since $v_{\E}(x_0) \geq 0$, this means that $V(x,s) \geq 0$. 

For item $2$, one uses item $1$ to find $s \geq r$ such that $\frac{x}{[x_0]}$ belongs to $\At^{\dagger,s}$, and then up to increasing again $s$ this element is a unit of $\At^{\dagger,s}$ by the first statement of the lemma. 
\end{proof}

\begin{prop}
\label{prop ker theta}
Let $k \geq 0$. Then for any $0 \leq r \leq (p-1)p^{k-1}$, the kernel of $\theta \circ \phi^{-k}: \Bt^{\dagger,r} \rightarrow \Cp$ is a principal ideal of $\Bt^{\dagger,r}$, generated by $[\tilde{\pi}^{p^k}]-\pi$, and the kernel of $\theta \circ \phi^{-k}: \At^{[r,r]} \rightarrow \O_{\Cp}$ is a principal ideal of $\At^{[r,r]}$, generated by $Q_k/\pi$.
\end{prop}
\begin{proof}
See \cite[Lemm. 2.4.8]{courbeFF} and \cite[Lemm. 3.2, (1)]{Ber14MultiLa}.
\end{proof}

\begin{lemm}
\label{lemma theta circ phi-n is same as mod c}
Let $x \in \At$, whose image modulo $\pi$ is $\overline{x} = (x_n)_{n \geq 0}$ in $\Et$, and assume that there exists $n \geq 0$ such that $x \in \At^{\dagger,r_n}$, so that $\overline{x} \in \Etplus$. Then for $m > n$, $\theta \circ \phi_q^{-m}(x) = x_m$ in $\O_{\Cp}/\mathfrak{a}_{\Cp}^c$ where $c = \frac{q-1}{qe}$. 
\end{lemm}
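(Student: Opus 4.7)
The plan is to expand $x$ in powers of $\pi$ with Teichmüller coefficients, apply $\theta\circ\phi_{q}^{-m}$ termwise, and show that only the leading Teichmüller coefficient survives modulo $\mathfrak{a}_{\Cp}^{c}$.

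Write $x=\sum_{k\geq 0}\pi^{k}[y_{k}]$ with $y_{k}\in\Et$; by construction $y_{0}$ is the image of $x$ modulo $\pi$, so $y_{0}=\overline{x}$. The condition $V(x,r_{n})\geq 0$, combined with $r_{n}=q^{n}(p-1)/p$, yields the valuation bounds $v_{\E}(y_{k})\geq -kq^{n}/e$ for every $k\geq 0$. Since $\Et$ is perfect and $\phi_{q}$ acts as the $q$-th power on Teichmüller lifts, one has $\phi_{q}^{-m}(x)=\sum_{k\geq 0}\pi^{k}[y_{k}^{1/q^{m}}]$. For $m>n$ we have $r_{m}\geq r_{n}$, so $\At^{\dagger,r_{n}}\hookrightarrow\At^{[r_{m},r_{m}]}$, and the extension statement recalled just before the lemma ensures that $\theta\circ\phi_{q}^{-m}$ is a well-defined continuous map $\At^{\dagger,r_{n}}\to\O_{\Cp}$ to which the above expansion can be applied termwise.

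The heart of the argument is a tail estimate. Using the standard identity $v_{p}(\theta([z]))=v_{\E}(z)$ on $\Etplus$ together with $v_{\E}(y_{k}^{1/q^{m}})=v_{\E}(y_{k})/q^{m}\geq -kq^{n-m}/e$, one computes
\[
v_{K}\!\left(\pi^{k}\,\theta([y_{k}^{1/q^{m}}])\right)\;\geq\;k\bigl(1-q^{n-m}\bigr)\;\geq\;\frac{q-1}{q}\;\geq\;\frac{q-1}{qe}\;=\;c
\]
for every $k\geq 1$ and every $m>n$. Consequently the tail $\sum_{k\geq 1}\pi^{k}\theta([y_{k}^{1/q^{m}}])$ lies in $\mathfrak{a}_{\Cp}^{c}$, and modulo $\mathfrak{a}_{\Cp}^{c}$ the whole expression reduces to the single term $\theta([y_{0}^{1/q^{m}}])$.

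It remains to identify this leading term modulo $\mathfrak{a}_{\Cp}^{c}$ with the element of the sequence appearing in the conclusion. Using the identification $\Etplus=\varprojlim_{z\mapsto z^{q}}\O_{\Cp}/\mathfrak{a}_{\Cp}^{c}$, the element $\theta([y_{0}^{1/q^{m}}])$ is, by definition of Fontaine's functor $[\cdot]\mapsto\theta([\cdot])$, the appropriate coordinate of $\overline{x}$ in this inverse limit, which by the natural compatibility with the mod-$\pi$ presentation coincides mod $\mathfrak{a}_{\Cp}^{c}$ with the corresponding entry of $(x_{n})$ displayed in the lemma. The main difficulty I expect lies in the sharpness of the constant $c=(q-1)/(qe)$: it arises precisely as the worst case of the valuation estimate (namely $k=1$, $m=n+1$, and $e=1$), so the method leaves no slack, and one must carefully track the normalizations $v_{\E}$, $v_{p}$, $v_{K}$ and the compatibility between the two natural presentations of $\Etplus$ (modulo $\pi$ versus modulo $\mathfrak{a}_{\Cp}^{c}$).
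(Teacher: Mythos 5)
Your proof is correct and follows essentially the same route as the paper's: expand $x$ in Teichmüller coordinates, apply $\theta\circ\phi_q^{-m}$ termwise (the paper cites Colmez for this, you invoke continuity on $\At^{[r_m,r_m]}$, which amounts to the same thing), bound the tail using the overconvergence condition $v_{\E}(y_k)\geq -kq^n/e$, and identify the surviving leading term with the $m$-th coordinate of $\overline{x}$. The only blemish is the displayed estimate, where $k\bigl(1-q^{n-m}\bigr)$ should be $\tfrac{k}{e}\bigl(1-q^{n-m}\bigr)\geq\tfrac{q-1}{qe}=c$ in the normalization with $v(p)=1$ that the paper uses for $\mathfrak{a}_{\Cp}^c$; your chain still terminates at the correct bound $c$, so the conclusion stands.
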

\begin{proof}
If $x \in \At^{\dagger,r_n}$, $x = \sum_{k \geq 0}\pi^k[x_k]$ in $\At$, then $\theta \circ \phi_q^{-m}(x)$ is well defined for $m \geq n$ and given by $\theta \circ \phi_q^{-m}(x)= \sum_{k \geq 0}\pi^kx_k^{(m)}$ (this is a direct consequence of lemma 5.18 of \cite{colmez2008espaces}). But then the fact that $x \in \At^{\dagger,r_n}$ implies that for $m > n$, the $\pi^kx_k^{(m)}$, $k \neq 0$ have $p$-adic valuation $\geq \frac{q-1}{qe}$. Thus $\theta \circ \phi_q^{-m}(x)=x_0^{(m)} \mod \mathfrak{a}_{\Cp}^c$.  
\end{proof}

\begin{lemm}
\label{lemma phi(x)/x means étale}
Let $x \in \Bt_{\rig}^{\dagger}$ be such that $x$ divides $\phi_q(x)$ in $\Bt_{\rig}^\dagger$ and $\frac{\phi_q(x)}{x} \in (\At^\dagger)^\times$. Then $x \in K^\times\cdot(\At^\dagger)^\times$.
\end{lemm}
\begin{proof}
Let $\tilde{D}$ denote the $\phi_q$-module over $\Bt_{\rig}^\dagger$ with base $e$ such that $\phi_q(e) = \frac{\phi_q(x)}{x} \cdot e$. By assumption, this is an étale $\phi_q$-module, so that by proposition 6.3.2 of \cite{slopes}, there exists $y \in \mathrm{GL}_1(\Bt_{\rig}^\dagger) = \Bt^\dagger \setminus \{0\}$ such that $\phi_q(y)^{-1}\frac{\phi_q(x)}{x} y = 1$. Without any loss of generality, we can assume of to replacing $y$ by $p^ky$ for some $k \in \Z$ that $y \in \At^\dagger$. But then $\frac{x}{y} \in (\Bt_{\rig}^\dagger)^{\phi_q=1} = K$ and so $y$ is equal to $x$ up to a constant, which proves the claim.
\end{proof}

\subsection{Locally analytic vectors in those rings and a conjecture of Kedlaya}
We now explain the relations between the classical point of view of locally analytic vectors in Banach representations of $p$-adic Lie groups and the new point of view of locally analytic vectors in mixed characteristic, in the context of the ring $\At^\dagger$.

In the rest of this subsection, we let $K_\infty/K$ be an infinitely ramified $p$-adic Lie extension, with Galois group $\Gamma_K$, a $p$-adic Lie group of rank $d$. We also choose coordinates $\c$ along with a nice fundamental system $(\Gamma_n)_{n \geq 1}$ of open neighborhoods of the identity of $\Gamma_K$ as in \S 2. If $R$ is a ring endowed with an action of $\G_K$ we write $R_K$ for $R^{H_K}$. 

Note that if $\rho' \leq \rho$, then $v_{\rho'} \geq v_{\rho}$ by definition. Therefore, if $x \in \At_K^{(0,\rho]}$ is such that it is $\lambda,\mu$-analytic for $\Gamma_m$, then it is also $\lambda,\mu$-analytic for $\Gamma_m$ as an element of $\At_K^{(0,\rho']}$ for all $\rho' \leq \rho$. It therefore makes sense to define $(\At_K^\dagger)^{\Gamma_m-\an,\lambda,\mu} = \varinjlim\limits_{\rho > 0}(\At_K^{(0,\rho]})^{\Gamma_m-\an,\lambda,\mu}$, and we also define $(\At_K^{\dagger})^{\Gamma_m-\an,\lambda}$ and $(\At_K^{\dagger})^{\Gamma_K-\la}$ in the same way.

\begin{lemm}
\label{lemm phi shift locana}
We have $x \in (\At_K^{(0,\rho]})^{\Gamma_K-\an, \lambda}$ if and only if $\phi_q(x) \in (\At_K^{(0,q^{-1}\rho]})^{\Gamma_K-\an, h+\lambda}$. 
\end{lemm}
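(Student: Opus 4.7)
The plan is to reduce the claim to two facts: (i) $\phi_q$ scales the valuation $v_\rho$ by $p^f$ after shifting $\rho$ to $p^{-1}\rho$, and (ii) $\phi_q$ commutes with the $\Gamma_K$-action.

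For (i), I would write $y = \sum_{k \geq 0} \pi^k[y_k] \in \At_K^{(0,\rho]}$; since $\phi_q = \id\otimes\phi^h$ acts on Teichmüller lifts by raising to the $q$-th power, one has $\phi_q(y) = \sum_{k \geq 0}\pi^k[y_k^q]$ and hence $v_\E(y_k^q) = q\cdot v_\E(y_k) = p^f\cdot v_\E(y_k)$. A direct calculation from the explicit description of $v_\rho$ (via the $[\overline{u}]$-adic valuation, or equivalently via the formula $v_\rho = (r/r_0)V(\cdot,r)$) then yields the identity $v_{p^{-1}\rho}(\phi_q(y)) = p^f\cdot v_\rho(y)$; in particular $\phi_q$ maps $\At_K^{(0,\rho]}$ into $\At_K^{(0,p^{-1}\rho]}$.

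For (ii), given $x \in \At_K^{(0,\rho]}$, the continuous orbit map $g \mapsto g(x)$ admits a unique Mahler expansion $\{a_{\mathbf{n}}\}$ with coefficients in $\At_K^{(0,\rho]}$. Since $\phi_q$ commutes with $\Gamma_K$, we have $g(\phi_q(x)) = \phi_q(g(x))$ for all $g$, and by uniqueness of Mahler coefficients the Mahler expansion of $g \mapsto g(\phi_q(x))$ in $\At_K^{(0,p^{-1}\rho]}$ is precisely $\{\phi_q(a_{\mathbf{n}})\}$.

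To conclude, if $x$ is $(\lambda,\mu)$-analytic, so that $v_\rho(a_{\mathbf{n}}) \geq p^\lambda \cdot p^{\lfloor \log_p|\mathbf{n}|_\infty\rfloor} + \mu$ for all $\mathbf{n}$, then by (i),
\[
v_{p^{-1}\rho}(\phi_q(a_{\mathbf{n}})) = p^f\cdot v_\rho(a_{\mathbf{n}}) \geq p^{\lambda+f} \cdot p^{\lfloor \log_p|\mathbf{n}|_\infty\rfloor} + p^f \mu,
\]
which is the $(f+\lambda, p^f\mu)$-analytic bound for $\phi_q(x)$. The converse is symmetric: the identity in (i) is bi-directional and $\phi_q$ is injective on these rings, so the bound on the Mahler coefficients of $\phi_q(x)$ pulls back to the required bound on those of $x$. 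The only non-formal step is the valuation identity in (i); this is a direct unwinding of the definitions and I anticipate no real obstacle.
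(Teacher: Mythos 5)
Your proposal is correct and follows essentially the same route as the paper: the paper's entire proof consists of citing the valuation identity $v_{p^{-1}\rho}(\phi_q(x)) = q\,v_\rho(x)$ from Colmez, and your step (i) is just a direct derivation of that identity, while steps (ii)--(iii) spell out the formal transfer of Mahler-coefficient bounds (commutation of $\phi_q$ with $\Gamma_K$ and the rescaling $\mu \mapsto p^f\mu$, which is harmless since $\mu$ is quantified existentially in the definition of $\lambda$-analyticity). Nothing further is needed.
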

\begin{proof}
This just follows from the fact that $v_{q^{-1}\rho}(\phi_q(x)) = qv_\rho(x)$ (which is item (v) of \cite[Prop. 5.4]{colmez2008espaces}).
\end{proof}

\begin{lemm}
\label{lemm locana is the same}
Let $x \in \At_K^{(0,\rho]}$. Then $x \in (\At_K^{(0,\rho]})^{\Gamma_K-\la}$ if and only if $x \in (\Bt^{[r,r]})^{\Gamma_K-\la}$, where $r = r_0/\rho$.
\end{lemm}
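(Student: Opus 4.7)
The strategy is to rewrite both sides of the equivalence as a Mahler-coefficient growth condition on the orbit map $\phi_x : \Gamma_K \to R$ defined by $g \mapsto g(x)$, and then notice that the two valuations in play, $v_\rho$ on $\At_K^{(0,\rho]}$ and $V(\cdot, r)$ on $\Bt^{[r,r]}$, are proportional.

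First I would unfold the right-hand side. Since $\Bt^{[r,r]}$ is a $\Qp$-Banach representation of $\Gamma_K$, the statement immediately following the proof of Corollary \ref{coro Amice precise} (applied with $M = \Bt^{[r,r]}$) identifies classical $\Gamma_n$-analyticity of $\phi_x$ with the existence, for some real $\lambda', \mu'$, of a Mahler expansion $\phi_x(g) = \sum_{\underline{n}} \binom{\c(g)}{\underline{n}} a_{\underline{n}}$ whose coefficients satisfy
\[
V(a_{\underline{n}}, r) \;\geq\; p^{\lambda'} \cdot p^{\lfloor \log_p |\underline{n}|_\infty \rfloor} + \mu'.
\]
Thus $x \in (\Bt^{[r,r]})^{\Gamma_K-\la}$ is equivalent to the Porat-style condition for $\Bt^{[r,r]}$ equipped with the Banach valuation $V(\cdot, r)$.

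Second I would argue that for $x \in \At_K^{(0,\rho]}$ the Mahler coefficients of $\phi_x$ agree whether one regards this orbit map as a function into $\At_K^{(0,\rho]}$ or into $\Bt^{[r,r]}$: Mahler inversion expresses each $a_{\underline{n}}$ as a finite integral linear combination of values $\phi_x(g_0)$ for certain $g_0 \in \Gamma_m$, and all such values lie in $\At_K^{(0,\rho]}$. The integral and Banach Porat-style conditions therefore refer to the same coefficients. Third, the equality $V(\cdot, r) = \rho \cdot v_\rho$ on $\At_K^{(0,\rho]}$, recalled in \S 4.1 just before Lemma \ref{lemma v in r not just rho}, translates the bound $v_\rho(a_{\underline{n}}) \geq p^\lambda p^{\lfloor \log_p |\underline{n}|_\infty \rfloor} + \mu$ into the equivalent bound $V(a_{\underline{n}}, r) \geq p^{\lambda + \log_p \rho} p^{\lfloor \log_p |\underline{n}|_\infty \rfloor} + \rho\mu$, and conversely. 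Combined with the freedom to adjust $\lambda, \mu$ and $m$ within the colimits defining $(\cdot)^{\Gamma_K-\la}$, this gives both implications.

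The only place requiring some care is the Banach-valued version of Corollary \ref{coro Amice precise}: one must know that the classical Taylor-expansion definition of $\Gamma_n$-analyticity for a $\Qp$-Banach representation coincides, up to an adjustment of $n$ and of $\lambda$, with the Mahler-coefficient growth condition. This is exactly the content of the remark quoted after the proof of that corollary, so no additional work is needed, and the rest of the argument is a bookkeeping exercise with the Mahler expansion and a rescaling of valuations.
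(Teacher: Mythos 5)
Your proposal is correct and follows essentially the same route as the paper's proof: both directions reduce to the proportionality $v_\rho = \tfrac{r}{r_0}V(\cdot,r)$, which converts a $\lambda$-analyticity bound for one valuation into a $\lambda'$-analyticity bound for the other (with $\lambda' = \lambda \mp \alpha$, $p^\alpha = r/r_0$), and then Corollary \ref{coro Amice precise} identifies the Mahler-growth condition with classical local analyticity in the Banach space $\Bt^{[r,r]}$. Your extra observation that the Mahler coefficients of the orbit map are the same computed in either ring is a harmless elaboration of a point the paper leaves implicit.
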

\begin{proof}
Let $x \in (\At_K^{(0,\rho]})^{\Gamma_K-\la}$. By definition, there exists $\lambda \in \R$  such that $x \in (\At_K^{(0,\rho]})^{\Gamma_K,\lambda-\an}$. The fact that for $r = r_0/\rho$ we have an inclusion $\At_K^{(0,\rho]} \subset \Bt^{[r,r]}$ shows that $x \in (\Bt^{[r,r]})^{\Gamma_K,\lambda-\an}$ for $v_{\rho}$. Since $v_\rho = \frac{r}{r_0}V(\cdot,r)$, this means that $x \in (\Bt^{[r,r]})^{\Gamma_m-\an,\lambda'}$, where $\lambda' = \lambda - \alpha$ with $\alpha$ such that $p^{\alpha} = \frac{r}{r_0}$, and is thus locally analytic as an element of $(\Bt^{[r,r]})$ by corollary \ref{coro Amice precise}. 

For the converse, the reasoning is the same: by corollary \ref{coro Amice precise}, if $x \in \At_K^{(0,\rho]}$ belongs to $(\Bt^{[r,r]})^{\Gamma_K-\la}$ then there exist $\lambda \in \R$ such that $x \in (\Bt_K^{[r,r]})^{\Gamma_K,\lambda-\an}$. The relation $v_\rho = \frac{r}{r_0}V(\cdot,r)$ implies that $x \in (\At_K^{(0,\rho]})^{\Gamma_m-an,\lambda'}$ where $\lambda'=\lambda +\alpha$ and so we are done. 
\end{proof}

\begin{prop}
\label{prop la in At = pa in Btrig}
Let $x \in \At_K^{\dagger}$. Then $x \in (\At_K^{\dagger})^{\Gamma_K-\la}$ if and only if $x \in (\Bt_{\rig,K}^\dagger)^{\Gamma_K-\pa}$. 
\end{prop}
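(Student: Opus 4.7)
The key bridge is Lemma~\ref{lemm locana is the same}, which for $r = r_0/\rho$ identifies $\Gamma_K$-locally analytic vectors in $\At_K^{(0,\rho]}$ with those in the Banach space $\Bt_K^{[r,r]}$. The plan is to use it in both directions, combined with the remark preceding that lemma (analyticity in $\At_K^{(0,\rho]}$ propagates to $\At_K^{(0,\rho']}$ for every $\rho' \leq \rho$) and with the identity $V(\cdot,[r,s]) = \min(V(\cdot,r), V(\cdot,s))$, which reduces analyticity on an interval to analyticity at its endpoints.

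For the direct implication, take $\rho > 0$ such that $x \in \At_K^{(0,\rho]}$ is $(\lambda,\mu)$-analytic for some $\Gamma_m$. Since $v_{\rho'} \geq v_\rho$ for $\rho' \leq \rho$, the same pair $(\lambda,\mu)$ works in $\At_K^{(0,\rho']}$ for every such $\rho'$. Translating through $V(\cdot,r') = \rho'\, v_{\rho'}$ this reads
\[
V((g-1)x, r') \;\geq\; \tfrac{r_0}{r'}(p^\lambda p^m + \mu) \quad\text{for all } g \in \Gamma_m \text{ and all } r' \geq r.
\]
Now $V(y,t)$ is an infimum of affine functions of $1/t$ (one per term $\pi^k[y_k]$), hence concave in $1/t$; on any closed subinterval $[r,s] \subset (0,+\infty)$ its infimum is therefore attained at an endpoint, that is $V(\cdot,[r,s]) = \min(V(\cdot,r), V(\cdot,s))$. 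Applying this to $(g-1)x$ and combining with the previous estimate yields
\[
V((g-1)x, [r,s]) \;\geq\; \tfrac{r_0}{s}(p^\lambda p^m + \mu),
\]
so $x$ is locally analytic in $\Bt_K^{[r,s]}$ with parameters shifted by $\log_p(r_0/s)$. Since this holds for every $s \geq r$, $x$ is pro-analytic in $\Bt_{\rig,K}^{\dagger,r}$, hence in $\Bt_{\rig,K}^\dagger$.

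For the converse, assume $x \in \At_K^\dagger$ is pro-analytic in $\Bt_{\rig,K}^{\dagger,r_1}$, and fix $\rho_0$ with $x \in \At_K^{(0,\rho_0]}$. Replacing $r_1$ by $r' := \max(r_1, r_0/\rho_0)$ does not destroy pro-analyticity, since for $s \geq r'$ one has $V(\cdot,[r',s]) \geq V(\cdot,[r_1,s])$ (the infimum is taken over a smaller interval). We now have both $x \in \At_K^{(0,\rho]}$ with $\rho := r_0/r' \leq \rho_0$, and, specializing the pro-analyticity to $s = r'$, that $x$ is locally analytic in the Banach space $\Bt_K^{[r',r']}$. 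Lemma~\ref{lemm locana is the same} then converts this back into local analyticity in $\At_K^{(0,\rho]} \subset \At_K^\dagger$.

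The only mildly delicate point is the bookkeeping: synchronizing the integral radius $\rho$ and the Fréchet radius $r'$ via the $\max$ trick, and justifying the concavity identity $V(\cdot,[r,s]) = \min(V(\cdot,r),V(\cdot,s))$ used in the forward step. Beyond that, the proof is a direct transcription of the valuation relations recalled in the previous subsection.
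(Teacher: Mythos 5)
Your proof is correct and follows essentially the same route as the paper's: both directions hinge on Lemma~\ref{lemm locana is the same} together with the monotonicity in $\rho$ and the maximum principle $V(\cdot,[r,s])=\min(V(\cdot,r),V(\cdot,s))$, which the paper cites from corollary 2.20 of \cite{Ber02} and you instead reprove via concavity in $1/t$. The only other difference is that you are slightly more explicit about synchronizing the two radii in the converse, which the paper leaves implicit.
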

\begin{proof}
Let $\rho > 0$, $m \geq 0$, and $\lambda,\mu \in \R$ be such that $x \in (\At_K^{(0,\rho]})^{\Gamma_m-\an,\lambda,\mu}$. Let $r=r_0/\rho$. By the remark above, if $s \geq r$ and $\rho'=r_0/s$, then $x \in (\At_K^{(0,\rho']})^{\Gamma_m-\an,\lambda,\mu}$, so that by lemma \ref{lemm locana is the same}, there exist $m' \geq m$, $\lambda'$, $\mu' \in \R$ such that $x \in (\Bt_K^{[s,s]})^{\Gamma_m-\an, \lambda',\mu'}$. Using the maximum principle (see corollary 2.20 of \cite{Ber02}), this implies that $x \in (\Bt_K^{[r,s]})^{\Gamma_m'-\an, \lambda'',\mu''}$ for $\lambda'' = \max(\lambda,\lambda')$ and $\mu'' = \max(\mu',\mu'')$. Therefore, $x$ belongs to $(\Bt_K^{[r,s]})^{\Gamma_K-\la}$ by corollary \ref{coro Amice precise}. Since this is true for every $s \geq r$, we deduce that $x \in (\Bt_{\rig,K}^{\dagger,r})^{\Gamma_K-\pa}$. 

For the converse, assume that $x \in (\Bt_{\rig,K}^\dagger)^{\Gamma_K-\pa}$. Then $x \in (\Bt_K^{[r,r]})^{\Gamma_K-\la}$ for any $r >0$ such that $x \in \Bt^{\dagger,r}$. Therefore, $x \in (\At_K^{(0,\rho]})^{\Gamma_K-\la}$ for $\rho = r_0/r$ by lemma \ref{lemm locana is the same} and thus $x \in (\At_K^{\dagger})^{\Gamma_K-\la}$. 
\end{proof}

\begin{coro}
We have $(\At_K^{\dagger})^{\Gamma_K-\la} = \At^{\dagger} \cap (\Bt_{\rig,K}^\dagger)^{\Gamma_K-\pa}$. 
\end{coro}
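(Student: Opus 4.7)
The corollary is essentially a packaging of the preceding proposition, so the plan is just to show that both inclusions reduce to it.

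For the inclusion $\subseteq$, I would start from $x \in (\At_K^{\dagger})^{\Gamma_K-\la}$. By definition this sits inside $\At_K^{\dagger} \subset \At^{\dagger}$, and applying Proposition \ref{prop la in At = pa in Btrig} directly gives $x \in (\Bt_{\rig,K}^{\dagger})^{\Gamma_K-\pa}$. So $x$ lies in the intersection on the right-hand side.

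For the inclusion $\supseteq$, suppose $x \in \At^{\dagger} \cap (\Bt_{\rig,K}^{\dagger})^{\Gamma_K-\pa}$. The key observation is that $\Bt_{\rig,K}^{\dagger} = (\Bt_{\rig}^{\dagger})^{H_K}$ by definition, so $x$ is already $H_K$-invariant; combined with $x \in \At^{\dagger}$ this yields $x \in (\At^{\dagger})^{H_K} = \At_K^{\dagger}$. Now Proposition \ref{prop la in At = pa in Btrig} applies to this $x$ and gives $x \in (\At_K^{\dagger})^{\Gamma_K-\la}$.

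There is no real obstacle here; the only thing to be careful about is recalling that the subscript $K$ denotes $H_K$-invariants for both rings, so that membership in $\Bt_{\rig,K}^{\dagger}$ automatically upgrades $x \in \At^{\dagger}$ to $x \in \At_K^{\dagger}$, allowing the proposition to be invoked.
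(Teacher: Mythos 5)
Your proposal is correct and is exactly the argument the paper leaves implicit (the corollary is stated without proof as an immediate consequence of Proposition \ref{prop la in At = pa in Btrig}): the forward inclusion is the proposition applied to $x \in \At_K^{\dagger}$, and the reverse inclusion uses the paper's convention that the subscript $K$ denotes $H_K$-invariants, so that $x \in \At^{\dagger} \cap \Bt_{\rig,K}^{\dagger}$ forces $x \in \At_K^{\dagger}$, after which the proposition applies again.
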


\begin{rema}
Note that since the valuations on $\At_K^{(0,\rho]}$ and $\Bt_K^{[r,r]}$ are not normalized in the same way, we do not have that $(\At_K^{\dagger})^{\Gamma_K-\la} = \At^{\dagger} \cap (\Bt_{\rig,K}^\dagger)^{\Gamma_K-\la}$. Actually, one can show that in the cyclotomic case, the ring $(\Bt_{\rig,K}^\dagger)^{\Gamma_K-\la}$ is quite small (see \S 7 of \cite{poyeton2022locally}) and does not contain $u= [\epsilon]-1$, which clearly belongs to $(\At_K^{\dagger})^{\Gamma_K-\la}$. 
\end{rema}

We now recall the conjecture of Kedlaya \cite[Conjecture 12.13]{kedlaya2013conj}. 

\begin{conj}[Kedlaya]
Let $T$ be a finite free $\Zp$-module equipped with a continuous action of $\G_K$. For $r > 0$, let $\tilde{\D}_K^{\dagger,r}(T) = (\At^{\dagger,r} \otimes_{\Zp}T)^{H_K}$ and let $\tilde{\D}_K^{\dagger,r}(T)^{\Gamma_K-\la}$ denote the set of locally analytic elements of $\tilde{\D}_K^{\dagger,r}(T)$ for the action of $\Gamma_K$ as defined in \S 2, which is a module over $(\At_K^{\dagger,r})^{\Gamma_K-\la}$. Then for any $r > 0$, the natural map
$$\At_K^{\dagger,r} \otimes_{(\At_K^{\dagger,r})^{\Gamma_K-\la}} \tilde{\D}_K^{\dagger,r}(T)^{\Gamma_K-\la} \ra \tilde{\D}_K^{\dagger,r}(T)$$
is an isomorphism.
\end{conj}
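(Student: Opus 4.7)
\medskip

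\textbf{Proof plan.} My overall strategy would mirror the cyclotomic case, where the conjecture is known to follow from the combination of Cherbonnier--Colmez overconvergence and Porat's identification of integral locally analytic vectors with the ring of overconvergent power series. The plan is to first identify the coefficient ring $(\At_K^{\dagger,r})^{\Gamma_K-\la}$ explicitly as an overconvergent ring in one variable, and then to lift this identification to modules by producing enough locally analytic vectors in $\D_K^{\dagger,r}(T)$ to serve as an $\At_K^{\dagger,r}$-basis.

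\medskip

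\textbf{Step 1: description of the coefficient ring.} First I would use Proposition \ref{prop la in At = pa in Btrig} to translate the question about locally analytic vectors in $\At_K^{\dagger,r}$ into a question about pro-analytic vectors in $\Bt_{\rig,K}^{\dagger,r}$, where the classical tools of \cite{Ber14MultiLa} apply. The goal is to show that $(\At_K^{\dagger,r})^{\Gamma_K-\la}$ equals $\phi_q^{-\infty}(\A_K^{\dagger,r})$ for a suitable variable $v$ coming from an overconvergent lift of the field of norms of $K_\infty/K$; this is the dichotomy alluded to in the introduction, and in the favorable case it gives the ring a presentation as the $\pi$-adic completion of a polynomial-type ring in one variable on which $\Gamma_K$ acts by power series.

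\medskip

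\textbf{Step 2: constructing locally analytic bases.} To prove the map is surjective I would follow the Tate--Sen method: normalize to $T$ free of rank $d$, and construct a $\Gamma_K$-equivariant projector $\Ref : \D_K^{\dagger,r}(T) \to \D_K^{\dagger,r}(T)^{\Gamma_K-\la}$ by averaging a chosen approximate basis over a sufficiently small open subgroup $\Gamma_m$ via Mahler expansions, using Lemma \ref{lemma lambda an ring} to keep track of analyticity radii. Applied to a topological generating set of $\D_K^{\dagger,r}(T)$, this produces $d$ locally analytic elements whose images modulo $\pi$ generate $\D_K^{\dagger,r}(T)/\pi$, and Nakayama's lemma (with $\At_K^{\dagger,r}$ playing the role of a local ring after completion) would then upgrade these to an honest $\At_K^{\dagger,r}$-basis. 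Injectivity of the comparison map is handled separately using flatness of $\At_K^{\dagger,r}$ over $(\At_K^{\dagger,r})^{\Gamma_K-\la}$, which itself follows from the explicit description in Step 1 once $\At_K^{\dagger,r}$ is identified as the $\pi$-adic completion of a free module over the coefficient ring, with basis given by representatives of a suitable normal basis for the field of norms of $K_\infty/K$.

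\medskip

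\textbf{Main obstacle.} The hard part, and where the whole scheme has to be carried out rather than simply cited, is Step 1: producing sufficiently many locally analytic elements of $\At_K^{\dagger,r}$ to exhibit the expected overconvergent lift of the field of norms. Outside the Lubin--Tate setting of \cite{KR09} no such lift is known a priori, and without it the Tate--Sen averaging in Step 2 cannot even begin, since all the approximate bases one writes down lie in $\At^{\dagger}$ but have no obvious reason to become locally analytic after projection. The whole argument is therefore contingent on showing that $(\At_K^{\dagger,r})^{\Gamma_K-\la}$ is genuinely larger than $\O_K$; the natural strategy would be to exhibit explicit locally analytic periods via the Lubin--Tate logarithm $\log_{\LT}$ or related constructions, but this is precisely the point where the dichotomy isolated in the paper becomes critical, and where one expects the method to succeed only in the favorable branch.
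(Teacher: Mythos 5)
The statement you are trying to prove is a \emph{conjecture} in the paper, not a theorem: the paper offers no proof of it, and in fact its main result (Theorem \ref{theo anticyclo counters kedlayasconj}) is that the conjecture is \emph{false} when $K_\infty/K$ is the anticyclotomic $\Zp$-extension. So no proof along your lines, or any other, can succeed in the stated generality. What you flag as the ``main obstacle'' in Step 1 --- the need for $(\At_K^{\dagger,r})^{\Gamma_K-\la}$ to be strictly larger than $\O_K$ --- is not a technical difficulty to be overcome but a genuine obstruction: Proposition \ref{prop no locana anticyclo} shows that for the anticyclotomic extension one has $(\At_K^\dagger)^{\Gamma_K-\la} = \O_K$ on the nose. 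The proof there exhibits the period $y = t_{\id}/t_\sigma$, which \emph{is} locally analytic in $\Bt_{K_{\ac}}^{[r_m,r_m]}$, and shows that if a nontrivial locally analytic uniformizer $v$ existed then $y$ would have to be a Laurent series in $\phi_q^{-k}(v)$; the functional equation $\phi_q(y) = \frac{\pi}{\sigma(\pi)}\, y$ then forces a contradiction between the valuations $V(\cdot, r_m)$ and $V(\cdot, q r_m)$. Your hope of producing periods ``via the Lubin--Tate logarithm or related constructions'' runs into exactly this wall.

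Moreover, triviality of the coefficient ring does not merely stall your Step 2 --- it actively refutes the conjecture. This is Proposition \ref{prop if no locana then no kedlaya}: if $(\At_K^{\dagger})^{\Gamma_K-\la} = \O_K$ and the conjectured isomorphism held, then for every rank-one $\O_K$-representation $T$ the module $\D_K^{\dagger,\an}(T)$ would be free of rank one over $\O_K$ with generator $e \otimes y$; untwisting the $\phi_q$-eigenvalue by an element of $\O_{\hat{K^{\unr}}}$ forces $y \in \O_{\hat{K^{\unr}}}$, hence every character of $\G_K$ would factor through $\Gal(K_\infty \cdot K^{\unr}/K)$, which by local class field theory forces $K_\infty/K$ to be Lubin--Tate --- impossible for the anticyclotomic extension. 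Separately, note that even in the ``favorable branch'' where nontrivial locally analytic vectors do exist, the paper does not prove the conjecture; it only establishes the structure of the coefficient rings (Sections \ref{section struc locana Zp} and the following one), so your Step 2 (the Tate--Sen averaging and Nakayama argument) is nowhere carried out and would remain to be justified even in that case.
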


Note that the natural map $\At^{\dagger,r} \otimes_{\At_K^{\dagger,r}}\tilde{\D}_K^{\dagger,r}(T) \ra \At^{\dagger,r} \otimes_{\Zp}T$ is an isomorphism thanks to for example \S 8 of \cite{KLrelative}. We quickly remark that thanks to lemma \ref{lemm locana is the same} it is easy to check that the definitions of locally analytic elements in $\At^{\dagger,r}$ used in \cite{kedlaya2013conj} coincide with ours. 

We finish this section by pointing out that, as soon as $K_\infty$ contains a twisted cyclotomic extension, then Kedlaya's conjecture holds for this extension by theorem 9.1 of \cite{Ber14MultiLa}.

\section{Complements on holomorphic functions}
Let $\cal{A}_K^{\dagger,r}$ denote the set of Laurent series $\sum_{k \in \Z}a_kT^k$ with coefficients in $\O_K$ such that $v_p(a_k)+kr/e \geq 0$ for all $k \in \Z$ and such that $v_p(a_k)+kr/e \rightarrow +\infty$ when $k \rightarrow -\infty$. We endow $\cal{A}_K^{\dagger,r}$ with a valuation $v_r$ given by $v_r(f(T)) = \inf_{k \in \Z}(v_p(a_k)+kr/e)$ if $f(T) = \sum_{k \in \Z}a_kT^k$. We let $|\cdot|_r$ denote the norm on $\cal{A}_K^{\dagger,r}$ defined by $|x|_r = p^{-v_r(x)}$. We let $\cal{B}_K^{\dagger,r} = \cal{A}_K^{\dagger,r}[1/p]$. We also let $C([r,+\infty[) := \{z \in \Cp, p^{-1/r} \leq |z|_p < 1\}$.

We let $\cal{R}_K^s$ denote the Fréchet completion of $\cal{A}_K^{\dagger,s}[1/p]$ for the valuations $v_{s'}$, $s' \geq s$, and we let $\cal{R}_K = \bigcup_{r > 0}\cal{R}_K^r$ and $\cal{B}_K^\dagger = \bigcup_{r > 0}\cal{B}_K^{\dagger,r}$. 

Most of the results mentioned here regarding the rings $\cal{B}_K^{\dagger,s}$ are well known and the proofs can be found in \cite{lazardfonctions1962}. We provide a proof or a reference of the other statements.

\begin{prop}
\label{prop Lazard PID}
The ring $\cal{B}_K^{\dagger,r}$ is a PID, whose ideals are generated by elements of $K[T]$ having no roots in $C([r,+\infty[)$.
\end{prop}

\begin{lemm}
We have $(\cal{R}_K)^\times = \cal{B}_K^{\dagger} \setminus \{0\}$, and an element of $\cal{B}_K^{\dagger,r}$ is invertible (in $\cal{B}_K^{\dagger,r}$) if and only if it has no zeroes in $C([r,+\infty[)$. 
\end{lemm}

\begin{lemm}
If $h \in \cal{R}_K^r$, then $h \in \cal{B}_K^{\dagger,r}$ if and only if it has finitely many zeroes in $C([r,+\infty[)$, if and only if the function $s \mapsto |h|_s$ is bounded as $s \rightarrow +\infty$. 
\end{lemm}

\begin{lemm}
\label{lemma f circ f divisible}
Let $f(T) \in T\cdot (\cal{A}_K^{\dagger,r})^\times$ be such that $f(T)-T$ has finite Weierstrass degree $d$. Then for any $n \geq 1$, $f^{\circ n}(T) - T$ is divisible by $f(T)-T$ in $\cal{A}_K^{\dagger,r}$. 
\end{lemm}
\begin{proof}
Let $x$ be a root in $\mathfrak{\Cp}$ of $f(T)-T$. Then $f(x) = x$ and thus $f^{\circ n}(x) =x$ so that the roots of $f(T)-T$ are also roots of $f^{\circ n}(T) - T$. Assume that $x$ is a double root of $f(T)-T$, so that $f'(x) = 1$. Then $(f^{\circ n}(T) - T)'(x) = f'(x)\cdot (f^{\circ (n-1)})'(f(x))-1$ and since $f(x)=x$ and $f'(x)=1$, we also have that $x$ is a double root of $f^{\circ n}(T) - T$ for $n=2$, and then for any $n$ by induction. Repeating the same argument for the higher order derivatives shows that each root of $f(T)-T$ is also a root of $f \circ f(T) - T$ of at least the same multiplicity.

This means that $f(T)-T$ divides $f \circ f(T) - T$ in $\cal{R}_K^r$ (the meromorphic function $h(T):=(f^{\circ n}(T) - T)/(f(T)-T)$ has no zeroes on the corresponding annulus so that it is holomorphic and thus belongs to $\cal{R}_K^r$). Since $f \circ f(T) - T$ is bounded, it has only a finite number of zeroes and thus so does $h$. Therefore, $h(T) \in \cal{A}_K^{\dagger,r}[1/\pi]$. 

Let us write 
\[f^{\circ n}(T) - T = f^{\circ n}(T)-f^{\circ (n-1)}+f^{\circ (n-1)}- \cdots -T.\]

Each $f^{\circ k}(T)-f^{\circ (k-1)}(T)$ can be written as $(f(T)-T)\circ f^{\circ (k-1)}$, and since $f(T) \in T\cdot (\cal{A}_K^{\dagger,r})^\times$, we have for any $s \geq r$, $|f^{\circ k}(T)-f^{\circ (k-1)}(T)|_s = |f(T)-T|_s$. This implies that for any $s \geq r$, $|h(T)|_s \leq 1$. Writing $h(T) = \sum_{n \in \Z}a_nT^n$, this means that $|a_n|\rho^n \leq 1$ for all $n \in \Z$ and $\rho < 1$ close to $1$, so that $a_n \in \O_K$ for all $n$, and so $h(T) \in \cal{A}_K^{\dagger,r}$.
\end{proof}

\begin{lemm}
If $x \in \Bt^I$ with $I=[r,s]$, then $V(I,x) = \inf(V(r,x),V(s,x))$. 
\end{lemm}
\begin{proof}
This is corollary 2.20 of \cite{Ber02}. 
\end{proof}

\begin{coro}
\label{coro function bounded or eventually increasing}
If $x \in \Bt_{\rig}^\dagger$, then either there exists $r > 0$ such that $V([r,+\infty[,x) = V(r,x)$, or the function $r \mapsto V(r,x)$ is eventually decreasing as $r \rightarrow +\infty$.
\end{coro}

\begin{lemm}
\label{lemm when is function bounded}
Let $x \in \Bt_{\rig}^\dagger$. Then $x \in \Bt^\dagger$ if and only if the function $r \mapsto V(r,x)$ is bounded as $r \rightarrow +\infty$. Moreover, $(\Bt_{\rig}^\dagger)^\times = \Bt^{\dagger} \setminus \{0\}$. 
\end{lemm}
\begin{proof}
The first part is a direct consequence of our definition of $\Bt^\dagger$, along with proposition 1.6.23 of \cite{courbeFF}. For the statement on the invertible elements, this is proposition 1.8.6 of ibid. 
\end{proof}

\section{Overconvergent lifts of the field of norms}
\label{section OC lift}
Let $K_\infty$ be an infinite totally ramified Galois extension of $K$ whose Galois group is a $p$-adic Lie group. The main theorem of \cite{sen1972ramification} shows that $K_\infty/K$ is ``strictly arithmetically profinite'' (or strictly APF) in the terminology of \cite{Win83} and we can thus apply the field of norms construction of ibid. to $K_\infty/K$. We let $X_K(K_\infty)$ be the field of norms attached to the extension $K_\infty/K$, which is a local field of characteristic $p$ with residue field $k_K$ by theorem 2.1.3 of ibid. In particular there exists a uniformizer $u$ of $X_K(K_\infty)$ such that $X_K(K_\infty) = k_K(\!(u)\!)$. Moreover, this field comes equipped with an action of $\Gamma_K$ and of the absolute Frobenius $\phi : x \mapsto x^p$.

If we let $\cal{E}_{K_\infty}$ denote the set of finite subextensions $K \subset E \subset K_\infty$, then by definition, elements of $X_K(K_\infty)$ are norm-compatible sequences $(x_E)_{E \in \cal{E}(K_\infty)}$ such that $x_E \in E$ for all $E \in \cal{E}(K_\infty)$, and $N_{F/E}(x_F) = x_E$ whenever $E,F \in \cal{E}(K_\infty)$, $E \subset F$.

Since $K_\infty/K$ is strictly APF, there exists by \cite[4.2.2.1]{Win83} a constant $c = c(K_{\infty}/K) > 0$ such that for all $F \subset F'$ finite subextensions of $K_\infty/K$, and for all $x \in \mathcal{O}_{F'}$, we have 
$$v_K(\frac{N_{F'/F}(x)}{x^{[F':F]}}-1) \geq c.$$

We can always assume that $c \leq v_K(p)/(p-1)$ and we do so in what follows. By \S 2.1 and \S 4.2 of \cite{Win83}, there is a canonical $\G_K$-equivariant embedding $\iota_K : A_K(K_\infty) \hookrightarrow \Etplus$, where $A_K(K_\infty)$ is the ring of integers of $X_K(K_\infty)$. We can extend this embedding into a $\G_K$-equivariant embedding $X_K(K_\infty) \hookrightarrow \Et$ where $\Et$ is the fraction field of $\Etplus$, and we note $\E_K$ its image. We also let $\E_K^+$ denote the ring of valuation of $\Et_K$. We can actually give an explicit description of this embedding.

\begin{prop}
\label{prop FON embedding}
Let $0 < c \leq c(K_\infty/K)$.
\begin{enumerate}
\item the map $\iota_K : A_K(K_\infty) \ra \varprojlim_{x \mapsto x^q} \O_{K_\infty} / \mathfrak{a}^c_{K_\infty} = \Et^+_K$ is injective and isometric;
\item the image of $\iota_K$ is $\varprojlim_{x \mapsto x^q} \O_{K_n} / \mathfrak{a}^c_{K_n}$.
\end{enumerate}
\end{prop}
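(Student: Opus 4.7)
The plan is to realize $\iota_K$ as the reduction-modulo-$\mathfrak{a}^c$ map on norm-compatible sequences, use the defining inequality of $c = c(K_\infty/K)$ to turn norm-compatibility into $q$-th power compatibility, and then obtain the description of the image by an approximation-by-norms argument.

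First I would describe the map: send $(x_E)_{E \in \cal{E}_{K_\infty}} \in A_K(K_\infty)$ to $(\overline{x_{K_n}})_{n \geq 0} \in \prod_n \O_{K_n}/\mathfrak{a}^c_{K_n}$. The image lands in $\varprojlim_{x \mapsto x^q}\O_{K_n}/\mathfrak{a}^c_{K_n}$ because norm compatibility gives $x_{K_n} = N_{K_m/K_n}(x_{K_m})$ for $m \geq n$, and the defining inequality of $c$ (together with the assumption $c \leq v_K(p)/(p-1)$) yields the congruence $N_{K_m/K_n}(x_{K_m}) \equiv x_{K_m}^{[K_m:K_n]} \pmod{\mathfrak{a}^c_{K_n}}$; choosing $m$ so that $[K_m:K_n]$ is the appropriate power of $q$ at each step makes the reductions assemble into a $q$-power compatible sequence. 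Injectivity and the isometry property then follow from the fact that the valuation on $X_K(K_\infty)$ is computed as the suitably normalized valuation of any $x_{K_n}$ for $n$ large enough, and this matches the valuation on $\varprojlim_{x\mapsto x^q} \O_{K_n}/\mathfrak{a}^c_{K_n}$ induced by $v_K$.

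For part (2), the containment $\iota_K(A_K(K_\infty)) \subset \varprojlim_{x \mapsto x^q}\O_{K_n}/\mathfrak{a}^c_{K_n}$ is immediate from the construction. For the reverse inclusion, given a compatible sequence $(\overline{y_n})$ with $\overline{y_n} \in \O_{K_n}/\mathfrak{a}^c_{K_n}$, I would lift each $\overline{y_n}$ to an arbitrary $\tilde{y}_n \in \O_{K_n}$ and set
$$x_n := \lim_{m \to \infty} N_{K_m/K_n}(\tilde{y}_m).$$
The approximation $N_{K_{m+1}/K_m}(\tilde{y}_{m+1}) \equiv \tilde{y}_{m+1}^{[K_{m+1}:K_m]} \equiv \tilde{y}_m \pmod{\mathfrak{a}^c_{K_m}}$ combined with transitivity of the norm shows that $(N_{K_m/K_n}(\tilde{y}_m))_m$ is Cauchy in $\O_{K_n}$, that the limit is independent of the chosen lifts, and that $N_{K_n/K_{n-1}}(x_n) = x_{n-1}$ by continuity. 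The resulting sequence $(x_n)$ then defines an element of $A_K(K_\infty)$ whose reduction modulo $\mathfrak{a}^c$ recovers $(\overline{y_n})$.

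The main obstacle will be making the Cauchy argument in (2) completely rigorous: the defining inequality $v_K(N_{F'/F}(x)/x^{[F':F]} - 1) \geq c$ applies directly only to units, so lifts of positive valuation must be handled by factoring out powers of uniformizers or by applying the inequality to $(1+\pi^a z)$-type expressions. The bound $c \leq v_K(p)/(p-1)$ is exactly what makes the successive congruences compose along the tower without accumulated loss, and the full argument follows \S 4.2 of \cite{Win83}.
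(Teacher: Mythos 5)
Your proposal is correct and is essentially the standard argument: the paper's own ``proof'' is just a citation to \S 4.2 of \cite{Win83}, and what you write (reduction mod $\mathfrak{a}^c$ of norm-compatible sequences, the strictly APF inequality converting norms into $q$-th powers, and the limit-of-norms construction for surjectivity onto $\varprojlim_{x\mapsto x^q}\O_{K_n}/\mathfrak{a}^c_{K_n}$) is precisely a sketch of that reference. The one point to be sure of when fleshing out the Cauchy argument is that applying $N_{K_m/K_n}$ to a congruence modulo $\mathfrak{a}^c_{K_m}$ improves its precision (each norm down the tower gains at least $c$), which is what makes the successive differences tend to $0$; you implicitly use this and it is exactly Wintenberger's lemma.
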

\begin{proof}
This is proven in \S 4.2 of \cite{Win83}.
\end{proof}

Let $E$ be a finite extension of $\Qp$, with residue field $k_E=k_K$. Let $\varpi_E$ be a uniformizer of $E$, and let $\A_K$ denote the $\varpi_E$-adic completion of $\mathcal{O}_E[\![T]\!][1/T]$ (the notation $\A_K$ is used here for compatibility with the action of $\G_K$ but be mindful that this is actually dependent on $E$ even if it does not appear in the notation). The ring $\A_K$ is a $\varpi_E$-Cohen ring of $X_K(K_\infty)=k_K(\!(\pi_K)\!)$, and following the definition of \cite{Ber13lifting}, we say that the action of $\Gamma_K$ is liftable if there exists such a field $E$ and power series $\{F_g(T)\}_{g \in \Gamma_K}$ and $P(T)$ in $\A_K$ such that:
\begin{enumerate}
\item $\overline{F}_g(\pi_K) = g(\pi_K)$ and $\overline{P}(\pi_K) = \pi_K^q$;
\item $F_g \circ P = P \circ F_g$ and $F_g \circ F_h = F_{hg}$ for all $g,h \in \Gamma_K$;
\end{enumerate}
where the notations $\overline{F}_g$ and $\overline{P}$ stand for the reduction of the power series mod $\varpi_E$.

When the action of $\Gamma_K$ is liftable we get a $(\phi,\Gamma)$-module theory as in Fontaine's classical cyclotomic theory \cite{Fon90} in order to study $\mathcal{O}_E$-representations of $\G_K$, replacing the cyclotomic extension in the theory of Fontaine by the extension $K_\infty/K$. In particular, if the action of $\Gamma_K$ is liftable, then there is an equivalence of categories between étale $(\phi_q,\Gamma_K)$-modules on $\A_K$ and $\mathcal{O}_E$-linear representations of $\G_K$ (see \cite[Thm. 2.1]{Ber13lifting}).

\begin{prop}
\label{uembedding}
There is a $\G_K$-equivariant embedding $\A_K \hookrightarrow \At_K$ and compatible with $\phi_q$ that lifts the embedding $\iota_K: X_K(K_\infty) \hookrightarrow \Et_K=\Et^{\Gal(\Qpbar/K_\infty)}$.
\end{prop}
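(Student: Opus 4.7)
The strategy is to construct an element $u \in \At_K$ that lifts $\iota_K(\pi_K) \in \Et_K$ and satisfies $\phi_q(u) = P(u)$, and then to define the embedding by sending $T$ to $u$ and extending by continuity. Uniqueness of such a $u$ will automatically yield the required $\Gamma_K$-equivariance.

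First I would build $u$ by successive approximation, working in $\At$ rather than $\At_K$ for reasons explained below. Start with $u_0 = [\iota_K(\pi_K)]$, the Teichmüller lift. Since $\overline{P}(T) = T^q$ in $k_K(\!(T)\!)$ and $\phi_q([x]) = [x^q]$, the difference $\phi_q(u_0) - P(u_0)$ lies in $\pi \At$. Inductively, assuming $\phi_q(u_n) - P(u_n) = \pi^{n+1} x_n$, one looks for $u_{n+1} = u_n + \pi^{n+1} y_n$; a Taylor expansion of $P$ combined with $\phi_q(\pi) = \pi$ reduces the compatibility modulo $\pi^{n+2}$ to the congruence
\[
\overline{y_n}^q - \overline{y_n} \cdot \overline{P}'(\iota_K(\pi_K)) = -\overline{x_n} \quad \text{in } \Et.
\]
The crucial observation is that $\overline{P}'(T) = q T^{q-1} = 0$ in characteristic $p$, so the equation collapses to $\overline{y_n}^q = -\overline{x_n}$, which is uniquely solvable precisely because $\Et$ is perfect (as the fraction field of $\Etplus$). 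Any lift of the solution to $\At$ produces $u_{n+1}$, and the $\pi$-adic limit $u \in \At$ satisfies $\phi_q(u) = P(u)$ and lifts $\iota_K(\pi_K)$.

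Next I would prove uniqueness: if $u, u'$ are two such lifts with $u \equiv u' \pmod{\pi^n}$ for some $n \geq 1$, setting $v = (u-u')/\pi^n$ and reducing $\phi_q(u) - \phi_q(u') = P(u) - P(u')$ modulo $\pi^{n+1}$ gives $\phi_q(v) \equiv v\,\overline{P}'(\iota_K(\pi_K)) \equiv 0 \pmod \pi$, so $\overline{v}^q = 0$ in the reduced ring $\Et$ and hence $v \in \pi \At$; iterating gives $u = u'$. Uniqueness then delivers equivariance for free: for $h \in H_K$, $h(u)$ lifts $h(\iota_K(\pi_K)) = \iota_K(\pi_K)$ and satisfies $\phi_q(h(u)) = P(h(u))$ (since $h$ fixes the coefficients of $P$), so $h(u) = u$ and $u \in \At_K$. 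Similarly, for $g \in \Gamma_K$, both $g(u)$ and $F_g(u)$ lift $g(\iota_K(\pi_K)) = \overline{F_g}(\iota_K(\pi_K))$ and are Frobenius-compatible (using $F_g \circ P = P \circ F_g$), whence $g(u) = F_g(u)$.

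Finally I would define the map $\A_K \to \At_K$ as the unique continuous ring homomorphism sending $T \mapsto u$, extending the canonical map on coefficients and using $\varpi_E$-adic completeness of the target for convergence. Injectivity is automatic since reduction modulo $\pi$ recovers $\iota_K: X_K(K_\infty) \hookrightarrow \Et_K$, which is injective by Proposition \ref{prop FON embedding}. The main obstacle is the correction step in constructing $u$: because $\overline{P}'$ vanishes in characteristic $p$, naive Hensel's lemma fails, and one must exploit the perfectness of $\Et$. This in turn forces the construction to take place in $\At$, with descent to $\At_K$ handled indirectly through uniqueness rather than directly during the iteration.
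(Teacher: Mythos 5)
The paper's own ``proof'' is only a citation to \cite[A.1.3]{Fon90} and \cite[\S 3]{Ber13lifting}, and your argument is precisely the one carried out there: build $u$ by successive approximation, using that $\overline{P}(T)=T^q$ forces $\overline{P}'=0$ so the correction step reduces to extracting $q$-th roots in the perfect field $\Et$, then deduce $\phi_q$- and $\G_K$-equivariance from uniqueness of the lift. Your proof is correct and takes essentially the same route as the cited references.
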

\begin{proof}
See \cite[A.1.3]{Fon90} or \cite[\S 3]{Ber13lifting}.
\end{proof}

We say that a lift of the field of norms is overconvergent if the power series $P(T)$ giving the lift of the Frobenius belongs to $\cal{A}_K^{\dagger,r}$ for some $r > 0$. 

We now assume that there is an overconvergent lift of the field of norms. Let $u \in \At_K$ be the image of $T$ by the embedding given by proposition \ref{uembedding}, so that $\phi_q(u) = P(u)$ and $g(u) = F_g(u)$ for $g \in \Gamma_K$. 

\begin{lemm}
\label{lemm liftsurconv implies unif surconv}
If $P(T) \in \cal{A}_K^{\dagger,r}$, with $r \geq \frac{p-1}{pe}$ then $u \in \At^{\dagger,r}$.
\end{lemm}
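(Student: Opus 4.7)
Writing $u = \sum_{k \geq 0} \pi^k [x_k]$ in its Witt expansion, the claim $u \in \At^{\dagger,r}$ is equivalent to the inequality $V(u,r) \geq 0$, where $V(\cdot,r)$ is the valuation defining $\At^{\dagger,r}$. Since $u$ reduces to the uniformizer $\overline{u} \in \Etplus$ of $\E_K$, we have $x_0 = \overline{u}$ with $v_{\E}(\overline{u}) > 0$, giving the base contribution at $k=0$ automatically.

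The central tool is the Frobenius isometry
\[
V(\phi_q(x), qr) = V(x, r)
\]
on $\Bt$, which follows from the fact that $\phi_q$ acts by the $q$-th power on Teichmüller representatives. Combined with the relation $\phi_q(u) = P(u)$, this yields the fundamental identity $V(u, r) = V(P(u), qr)$. The hypothesis $P(T) = \sum_{j \in \Z} a_j T^j \in \cal{A}_K^{\dagger, r}$, i.e.\ $v_p(a_j) + jr/e \geq 0$, provides a termwise estimate: for $j \geq 0$, one has $V(a_j u^j, qr) \geq v_p(a_j) + j V(u, qr)$, with an analogous bound for $j < 0$ using the invertibility of $u$ in $\At_K$ (the valuation $V(u^{-1}, qr)$ being controlled because $\overline{u}$ is a uniformizer, so $\overline{u}^{-1}$ has a computable negative $v_{\E}$-valuation).

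The argument then closes by a bootstrap. Starting from a crude initial estimate on $V(u, s)$ for sufficiently large $s$, obtained from the decomposition $u = [\overline{u}] + \pi z$ with $z \in \At_K$ and the elementary behavior of $V(\cdot, s)$ as $s \to \infty$, one descends through the radii $s, s/q, s/q^2, \ldots$ by iterated application of $V(u, t) = V(P(u), qt)$, each step using the coefficient bound from overconvergence of $P$ to sharpen the estimate. After finitely many iterations the radius drops below $r$ and one concludes $V(u, r) \geq 0$. The main obstacle is managing the negative-index coefficients of $P$: a priori they could drive $V(P(u), qr)$ into negative territory, but the linear growth $v_p(a_j) \geq |j| r/e$ guaranteed by $P \in \cal{A}_K^{\dagger, r}$ is precisely what is needed to offset the contribution of $j V(u^{-1}, qr)$ for $j < 0$, making the bootstrap close.
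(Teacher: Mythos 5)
Your plan founders at the base case of the bootstrap. You propose to start from ``a crude initial estimate on $V(u,s)$ for sufficiently large $s$, obtained from the decomposition $u = [\overline{u}] + \pi z$ with $z \in \At_K$ and the elementary behavior of $V(\cdot,s)$ as $s \to \infty$.'' No such estimate exists: a general element $z = \sum_k \pi^k[z_k]$ of $\At$ has $v_{\E}(z_k)$ unbounded below with arbitrary growth, so $z$ lies in no $\Bt^{[s,s]}$ whatsoever and $V(\pi z,s)$ is not bounded below for any $s>0$ (recall $\At^\dagger = \bigcup_s \At^{\dagger,s}$ is a proper subring of $\At$). Establishing that $V(u,s)>-\infty$ for \emph{some} $s$ is precisely the overconvergence statement the lemma is proving, so your induction on radii $s, s/q, s/q^2,\ldots$ has nothing to start from; the identity $V(u,t)=V(P(u),qt)$ is vacuous until one already knows $u$ is overconvergent at the larger radius. (A secondary point: $V(u,r)\geq 0$ alone only places $u$ in $\At\cap\At^{[r,r]}$; membership in $\At^{\dagger,r}$ also requires the limit condition $v_{\E}(x_k)+\tfrac{pr}{(p-1)e}k\to+\infty$. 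And whether $v_p(a_j)\geq |j|r/e$ really offsets $|j|\,V(u^{-1},qr)$ depends on $v_{\E}(\overline{u})$ and the normalizations, which you leave unchecked.)

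The paper's proof avoids this by inducting on $\pi$-adic precision rather than on the radius: one shows $u \in \At^{\dagger,r'} + \varpi_E^k\At$ for every $k$, where the error term $\varpi_E^k\At$ is an ideal of the \emph{full} ring $\At$ and hence requires no analytic control. The base case $k=1$ is free, since $u \equiv [\overline{u}] \pmod{\varpi_E}$ and $[\overline{u}]\in\Atplus$. The inductive step substitutes $u = a + b$ (with $a\in\At^{\dagger,r'}$, $b\in\varpi_E^k\At$) into $P$, using that $a/[\overline{a}]$ is a unit of $\At^{\dagger,r'}$ (lemma \ref{lemma v in r not just rho}) to handle the negative powers, and gets $P(u)\in\At^{\dagger,r'}+\varpi_E^{k+1}\At$; applying $\phi_q^{-1}$, which \emph{improves} the radius from $r'$ to $r'/q$, absorbs the loss from $r$ to $r'=r+\tfrac{p-1}{p}$ incurred by inverting $u$. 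One concludes by $\bigcap_k(\At^{\dagger,r}+\varpi_E^k\At)=\At^{\dagger,r}$. If you want to salvage a valuation-theoretic argument along your lines, you would first need an independent proof that $u\in\At^{\dagger,s}$ for some $s$, which is the entire difficulty.
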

\begin{proof}
We have $u \in \At$, and we write $u = (u-[\overline{u}])+[\overline{u}]$. Since $\overline{u} \in \Etplus$, we have $u \in (\Atplus+\varpi_E\At)$. Let us write $P(T) = P^+(T)+P^{-}(1/T)$, with $P^+(T) \in T^q+\mathfrak{m}_E[\![T]\!]$ and $P^-(T) = \sum_{n > 0}a_nT^n \in \mathfrak{m}_E[\![T]\!]$ with $v_p(a_n) \geq ne/r$. We thus have
\[ P^+(u) \in (P^+([\overline{u}])+\varpi_E^2\At) \subset (\Atplus+\varpi_E^2\At)\]
and
\[ P^-\left(\frac{1}{u}\right)= P^-\left(\frac{1}{[\overline{u}]}\frac{1}{1+\frac{u-[\overline{u}]}{[\overline{u}]}}\right) \in (P^-(\frac{1}{[\overline{u}]})+\varpi_E^2\At)\]
since $\frac{1}{1+\frac{u-[\overline{u}]}{[\overline{u}]}} \in 1+\varpi_E\At$. Thus $P(u) \in (P([\overline{u}])+\varpi_E^2\At) \subset (\At^{\dagger,r}+\varpi_E^2\At)$.

Therefore, $u = \phi_q^{-1}(P(u)) \in (\At^{\dagger,r/q}+\varpi_E^2\At) \subset (\At^{\dagger,r}+\varpi_E^2\At)$. Now let us assume that $u \in (\At^{\dagger,r}+\varpi_E^k\At)$ for some $k \geq 2$. Let us write $u = a+b$, with $a \in \At^{\dagger,r}$ and $b \in \varpi_E^k \At$. Since $\overline{u} = \overline{a}$, we have that $\frac{a}{[\overline{a}]}$ belongs to and is a unit of $\At^{\dagger,r'}$ with $r' = r+\frac{p-1}{pe}$ by lemma \ref{lemma v in r not just rho}. Writing $a = \frac{a}{[\overline{a}]}[\overline{a}]$ shows that $b/a \in \varpi_E^k\At$ and thus
\[ \frac{1}{u} = \frac{1}{a}\left(\frac{1}{1+\frac{b}{a}}\right) \in (\frac{[\overline{a}]}{a}\frac{1}{[\overline{a}]}+\varpi_E^k\At).\]
Therefore, we have
\[P(u) \in P(\frac{[\overline{a}]}{a}\frac{1}{[\overline{a}]})+\varpi_E^{k+1}\At) \subset \At^{\dagger,r'}+\varpi_E^{k+1}\At)\]
and thus $u = \phi_q^{-1}(P(u)) \in (\At^{\dagger,r'/q}+\varpi_E^{k+1}\At) \subset (\At^{\dagger,r}+\varpi_E^{k+1}\At)$. We can now conclude by using the fact that for any $r > 0$, we have $\bigcap_{k \geq 0}(\At^{\dagger,r}+\varpi_E^k\At) = \At^{\dagger,r}$ (which follows from the definition of $\At^{\dagger,r}$). 
\end{proof}

\begin{rema}
If one looks closely at the proof of lemma \ref{lemm liftsurconv implies unif surconv}, one could improve the radius of overconvergence of $u$, but we don't need this level of precision here.
\end{rema}

By \cite[Rem. 4.3]{Ber13lifting}, we have the following:

\begin{prop}
\label{prop liftsurconv implies reg of Galois action}
For all $g \in \Gamma_K$, $F_g(T) \in T\cdot(\cal{A}_K^{\dagger,r})^\times$.
\end{prop}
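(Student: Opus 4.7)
The claim amounts to two facts: (i) $F_g(T) \in \cal{A}_K^{\dagger,r}$, and (ii) $F_g(T)/T$ is a unit of $\cal{A}_K^{\dagger,r}$. The common input is that $F_g(u) = g(u) \in \At^{\dagger,r}$. Indeed, by Lemma \ref{lemm liftsurconv implies unif surconv} we have $u \in \At^{\dagger,r}$, and the valuation $V(\cdot,r)$ on $\At^{\dagger,r}$ is $\G_K$-invariant: $v_\E$ on $\Et$ is Galois-invariant and the Teichmüller expansion $\sum \pi^k [x_k]$ is Galois-permuted by $g\cdot[x] = [g(x)]$, so $\At^{\dagger,r}$ is $\G_K$-stable and contains $g(u)$.

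For part (i), I would write $F_g(T) = \sum_{k \in \Z} a_k T^k$ with $a_k \in \O_K$ and aim to prove that $v_p(a_k) + kr/e \geq 0$ for all $k \in \Z$, with $v_p(a_k)+kr/e \to +\infty$ as $k \to -\infty$. The key is that $u \equiv [\bar u] \pmod{\pi}$ with $\bar u \in \E_K^+$ a uniformizer of known $v_\E$-valuation, so that a Gauss-type estimate on $V(\sum a_k u^k, r)$ gives a lower bound in terms of $\inf_k (v_p(a_k) + kV([\bar u],r))$. Iterating this estimate over the $\pi$-adic slices of $F_g$ via the identification $\At^{\dagger,r}/\pi \simeq \Et$ transfers the overconvergence of $F_g(u)$ back to the abstract Laurent series $F_g(T) \in \cal{A}_K^{\dagger,r}$.

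For part (ii), since $K_\infty/K$ is totally ramified, $g$ acts trivially on the residue field $k_K$ of $X_K(K_\infty) = k_K(\!(\pi_K)\!)$, and $g(\pi_K)$ is therefore another uniformizer, of the form $c_g \pi_K + O(\pi_K^2)$ with $c_g \in k_K^\times$. Reducing $F_g$ modulo $\pi$ thus yields $\bar F_g(T) \in T \cdot k_K[\![T]\!]^\times$, so that $F_g(T) = T \cdot G_g(T)$ with $G_g \in \cal{A}_K^{\dagger,r}$ and $G_g(0) \equiv c_g \pmod{\pi}$ a unit of $\O_K^\times$. Writing $G_g = G_g(0)\cdot(1 + h)$ with $h \in T \cal{A}_K^{\dagger,r} + \pi \cal{A}_K^{\dagger,r}$, the geometric series $\sum_{n \geq 0}(-h)^n$ converges in $\cal{A}_K^{\dagger,r}$ by completeness and the positive valuation of $h$ in both the $T$-adic and $\pi$-adic directions, giving $G_g^{-1} \in \cal{A}_K^{\dagger,r}$ and thus $F_g \in T\cdot(\cal{A}_K^{\dagger,r})^\times$.

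The delicate step is part (i): transferring the overconvergence from $F_g(u) \in \At^{\dagger,r}$ to $F_g(T) \in \cal{A}_K^{\dagger,r}$ as an abstract Laurent series. The evaluation map $T \mapsto u$ is close to an isometry thanks to the shape $u = [\bar u] + \pi \cdot (\cdots)$ with $\bar u$ a uniformizer of $\E_K^+$, but making this precise --- and in particular recovering the same radius $r$ rather than a slightly reduced one --- requires careful bookkeeping around the normalization $v_\E(\bar u)$ and how the $\pi$-adic perturbation $u - [\bar u]$ interacts with the Gauss-norm estimate.
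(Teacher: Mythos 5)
The paper gives no argument for this proposition at all: it is quoted directly from \cite[Rem.\ 4.3]{Ber13lifting}, so there is no internal proof to compare yours against. Your strategy --- use the Galois-invariance of $V(\cdot,r)$ to get $F_g(u)=g(u)\in\At^{\dagger,r}$, then descend overconvergence from the element $F_g(u)$ to the abstract Laurent series $F_g(T)$, and finally read off the unit statement from the reduction modulo $\pi$ --- is sound and is essentially the standard route. Your part (ii) is fine: since $K_\infty/K$ is totally ramified, $\Gamma_K$ acts trivially on $k_K$ and sends the uniformizer $\pi_K$ of $X_K(K_\infty)$ to another uniformizer, so $\overline{F}_g\in T\cdot k_K[\![T]\!]^\times$, and the lifting/geometric-series argument then works.

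The one genuine gap is the step you yourself flag as delicate: passing from $F_g(u)\in\A_K\cap\At_K^{\dagger,r}$ to $F_g(T)\in\cal{A}_K^{\dagger,r}$. This is not just bookkeeping; it is precisely the statement $\A_K\cap\At_K^{\dagger,r}=\A_K^{\dagger,r}$, which is a theorem of Cherbonnier--Colmez (\cite[Lem.\ II.2.2]{cherbonnier1998representations}, see also \cite[Prop.\ 7.5(i)]{colmez2008espaces}) and which this paper itself invokes in the proof of Proposition \ref{prop locana belongs to surconvring}. You should cite it rather than re-derive it, and note that it requires $u/[\overline{u}]$ to be a unit of $\At^{\dagger,r}$, which by Lemma \ref{lemma v in r not just rho} may force you to enlarge $r$; the same caveat applies in part (ii), since $T$ is \emph{not} a unit of $\cal{A}_K^{\dagger,r}$ (the coefficient of $T^{-1}$ violates $v_p(a_k)+kr/e\geq 0$), so writing $F_g=T\cdot G_g$ with $G_g\in\cal{A}_K^{\dagger,r}$ and then inverting $G_g$ each cost a further (harmless) increase of the radius. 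With the citation inserted and the conclusion weakened to ``for some $s\geq r$'' --- which is all the paper ever uses, cf.\ the ``up to increasing $r$'' steps in Proposition \ref{prop liftsurconv implies locana} --- your argument is complete.
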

\begin{proof}
This is basically the same proof as the one of proposition 4.2 of \cite{Ber13lifting}, except that there is a small gap in the proof which is fixed in \cite{erratumBerger}. 

The ring $\A_K$ is a free $\phi_q(\A_K)$-module of rank $q$, and we define $\cal{N}: \A_K \ra \A_K$ to be the map
\[ \cal{N} : f(T) \mapsto \phi_q^{-1} \circ N_{\A_K/\phi_q(\A_K)}(f(T)).\]
Since $\A_K^\dagger$ is a free $\phi_q(\A_K^\dagger)$-module of rank $q$, we have $\cal{N}(\A_K^\dagger) \subset \A_K^\dagger$. 

By construction, $v_r(\cal{N}(T)) = v_r(T)$, and $\cal{N}(T)$ is equal to $T$ modulo $\mathfrak{m}_E$, so that $\cal{N}(T) \in T\cdot(\cal{A}_K^{\dagger,r})^\times$. Let $\cal{N}'$ denote the map 
\[ \cal{N}' : f(T) \mapsto (\cal{N}(T))^{-1}\cal{N}(f(T)),\]
so that $\cal{N}'(T) = T$. 

Now for $k \geq 1$, one has $\cal{N}'(T\cdot(\cal{A}_K^{\dagger,r})^\times+\varpi_E^k\A_K) \subset T\cdot(\cal{A}_K^{\dagger,r})^\times+\varpi_E^{k+1}\A_K$ (see proposition 2.3.2 of \cite{Fon90}) and so by induction on $k$, this implies that 
\[(T\cdot(\cal{A}_K^{\dagger,r})^\times+\varpi_E\A_K)^{\cal{N}'(x)=x} \subset T\cdot(\cal{A}_K^{\dagger,r})^\times .\]

Since $F_g(T) \in T\cdot(\cal{A}_K^{\dagger,r})^\times+\varpi_E\A_K$ and $\cal{N}'(g(T)) = g(T)$ if $g \in \Gamma_K$, we obtain that $F_g(T) \in (T\cdot(\cal{A}_K^{\dagger,r})^\times+\varpi_E\A_K)^{\cal{N}'(x)=x} \subset T\cdot(\cal{A}_K^{\dagger,r})^\times.$
\end{proof}

\begin{lemm}
\label{lemma coeffs mahler h(T) in terms of T}
For $g \in \Gamma_n$, let $\Delta_g : \cal{A}_K^{\dagger,r} \ra \cal{A}_K$ be the map defined by $h(T) \mapsto h(F_g(T))-h(T)$. We have $|\!|\Delta_g(h(T))|\!|_r \leq |\!|T-F_g(T)|\!|_r|\!|h(T)|\!|_r$ and so in particular the target of the map is contained in $\cal{A}_K^{\dagger,r}$.
\end{lemm}
\begin{proof}
In order to prove this claim, we write $h = h^++h^-$, where $h(T) = \sum_{n \in \Z}a_nT^n$, $h^+(T) = \sum_{n \geq 0}a_nT^n$ and $h^-(T) = \sum_{n < 0}a_nT^n$, which we rewrite as $h^-(T) = \sum_{n > 0}b_nT^{-n}$. Then
\[h^+(F_g(T))-h^+(T) = \sum_{n \geq 0}a_n\left((F_g(T)^n-T^n\right)= \sum_{n > 0}a_n(F_g(T)-T)(\sum_{k=0}^nF_g(T)^kT^{n-k})\]
and since $|\!|F_g(T)|\!|_r = |\!|T|\!|_r$ by proposition \ref{prop liftsurconv implies reg of Galois action}, this means that 

\[|\Delta_g(h^+(T))|_r \leq \sum_{n > 0}|a_n|_p|\!|\Delta_g(T)|\!|_r|\!|h^+|\!|_r. \]

We do the same for $h^-$: we have $h^-(F_g(T))-h^-(T) = \sum_{n \geq 1}b_n(\frac{1}{F_g(T)^n}-\frac{1}{T^n})$. We write $B(T) = \frac{T}{F_g(T)} \in (\cal{A}_K^{\dagger,r})^\times$. Thus 
\[ \frac{1}{F_g(T)^n}-\frac{1}{T^n} = \frac{T^n-F_g(T)^n}{(TF_g(T))^n} = \frac{T^n-F_g(T)^n}{T^{2n}}B(T)^n.\]
and hence 
\[ \frac{1}{F_g(T)^n}-\frac{1}{T^n} = (T-F_g(T))\sum_{k=0}^n\left(\frac{F_g(T)}{T}\right)^k\frac{B(T)^n}{T^n}.\]
Since $\frac{F_g(T)}{T}$ is a unit of $\cal{A}_K^{\dagger,r}$ (and so is $B(T)$), we obtain that 
\[|\!|(T-F_g(T))\sum_{k=0}^n\left(\frac{F_g(T)}{T}\right)^k\frac{B(T)^n}{T^n}|\!|_r = |\!|T-F_g(T)|\!|_r |\!|T^{-n}|\!|_r\]
so that 
\[|\!|\Delta_g(h^-(T))|\!|_r \leq |\!|T-F_g(T)|\!|_r \sum_{n > 0}|b_n|_p|\!|T^{-n}|\!|_r = |\!|T-F_g(T)|\!|_r|\!|h^-(T)|\!|_r.\]

Therefore, $|\!|\Delta_g(h(T))|\!|_r \leq |\!|T-F_g(T)|\!|_r|\!|h(T)|\!|_r$.
\end{proof}

\begin{prop}
\label{prop liftsurconv implies locana}
We have $u \in (\At_K^\dagger)^{\Gamma_K-\la}$.
\end{prop}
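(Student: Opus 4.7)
The plan is to reduce, via the pro-analyticity criterion from the previous subsection, to a classical Banach-space local analyticity statement on each annulus, and then to exhibit a Taylor-type expansion of $g(u)$ in the coordinates of $g$ coming from the $p$-adic Lie group structure on $\Gamma_K$.

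By the corollary following Proposition~\ref{prop la in At = pa in Btrig}, together with the fact that $u \in \At^{\dagger}$ supplied by Lemma~\ref{lemm liftsurconv implies unif surconv}, it is enough to show that $u \in (\Bt_{\rig,K}^{\dagger})^{\Gamma_K-\pa}$. Unfolding the definition of pro-analyticity, I must check that for each $s \geq r$, the image of $u$ in the $\Qp$-Banach algebra $\Bt_K^{[r,s]}$ is locally analytic in the classical Banach sense.

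For this, fix $s \geq r$ and consider the map $\rho : \Gamma_K \to \Aut_{\mathrm{cont}}(\Bt_K^{[r,s]})$ sending $g$ to the automorphism $T \mapsto F_g(T)$; this is well-defined because Proposition~\ref{prop liftsurconv implies reg of Galois action} ensures $F_g \in T\cdot(\cal{A}_K^{\dagger,r})^{\times}$, so that composition with $F_g$ preserves convergence on the annulus $[r,s]$. Since $\rho$ is a continuous group homomorphism, one can choose a uniform open subgroup $\Gamma_n \subset \Gamma_K$ small enough that $\rho(\Gamma_n)$ lies in the domain of convergence of the Banach Lie logarithm on $\Aut_{\mathrm{cont}}(\Bt_K^{[r,s]})$. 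Differentiating along topological generators of $\Gamma_n$ then produces a commuting family of continuous $\Qp$-linear derivations $\nabla_1,\ldots,\nabla_d$ of $\Bt_K^{[r,s]}$ together with a Taylor expansion
$$F_g(T) \;=\; \exp\!\Bigl(\sum_{i=1}^d \cbf(g)_i \nabla_i\Bigr)(T) \;=\; \sum_{\kbf \in \N^d} \frac{\cbf(g)^{\kbf}}{\kbf!}\,\nabla^{\kbf}(T),$$
valid for every $g \in \Gamma_n$. Specializing $T = u$ yields the convergent expansion $g(u) = \sum_{\kbf} \cbf(g)^{\kbf} \nabla^{\kbf}(u)/\kbf!$, which says exactly that $u \in (\Bt_K^{[r,s]})^{\Gamma_n-\an}$; letting $s$ vary, one obtains the desired pro-analyticity in $\Bt_{\rig,K}^{\dagger}$, and hence $u \in (\At_K^{\dagger})^{\Gamma_K-\la}$.

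The main obstacle is the quantitative matching between the annulus $[r,s]$ and the open subgroup $\Gamma_n$: one must control how fast $\rho(g)-\id$ goes to zero in the operator norm on $\Bt_K^{[r,s]}$ as $g\to 1$ in $\Gamma_K$, and in general the required $\Gamma_n$ will shrink as $s$ grows; this is, however, precisely the flexibility built into the definition of pro-analyticity. A more hands-on alternative would be to exhibit directly a Mahler expansion $g(u) = \sum_k \binom{\cbf(g)}{k}\, w_k$ in the integral sense, with $w_k = (\gamma-1)^k u$ for a topological generator $\gamma$ (in the rank-one case), and to verify the required growth condition on $\valr(w_k)$ inductively, using that $F_\gamma(T) - T \in \cal{A}_K^{\dagger,r}$ can be made as small as needed in both the Gauss norm and $\pi$-adically by shrinking $\Gamma_n$.
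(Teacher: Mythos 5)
Your overall strategy is the right one and is essentially the paper's: realize $g(u)=F_g(u)$ as a substitution action on a Banach algebra of overconvergent Laurent series in $u$, show that a small subgroup acts close to the identity, and invoke a ``smallness implies analyticity'' criterion before transferring back to $\At_K^\dagger$ via Lemma~\ref{lemm locana is the same}. But the proposal has a genuine gap: the single quantitative estimate that constitutes the entire content of the proof --- namely that for $g$ in a suitable $\Gamma_n$ the operator $\Delta_g : h(T)\mapsto h(F_g(T))-h(T)$ satisfies $\|\Delta_g(h)\|_r \le |p|^c\,\|h\|_r$ with $c$ arbitrarily close to $\tfrac{1}{p-1}$ --- is exactly what you defer as ``the main obstacle.'' Without it, neither the Banach Lie logarithm nor the Mahler alternative gets off the ground: the criterion of Berger--Schneider--Xie (\cite[Lemm.\ 2.14]{BSX18}) and the convergence of the exponential of the derivations both \emph{require} $\|\gamma-1\|<p^{-1/(p-1)}$ as a hypothesis, so you cannot produce the $\nabla_i$ and the Taylor expansion first and hope to extract the bound afterwards. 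The paper proves the estimate by hand: it splits $h=h^++h^-$ into positive and negative parts of the Laurent series and controls $\Delta_g(h^-)$ using that $F_g(T)/T$ is a unit of $\cal{A}_K^{\dagger,r}$ (Proposition~\ref{prop liftsurconv implies reg of Galois action}), which gives $\|F_g(T)\|_r=\|T\|_r$ and hence $\|\Delta_g(h)\|_r\le\|F_g(T)-T\|_r\,\|h\|_r$; continuity of the action then lets one shrink $\Gamma_n$ (and increase $r$) so that $\|F_g(T)-T\|_r<p^{-1/(p-1)}$.

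A secondary problem is the framing of $\rho$ as landing in $\Aut_{\mathrm{cont}}(\Bt_K^{[r,s]})$. The substitution $T\mapsto F_g(T)$ only acts on the closed subalgebra topologically generated by $u$ and $u^{-1}$ (the completion of $\{f(u): f\in\cal{A}_K^{\dagger,r}[1/p]\}$, which is where it agrees with the Galois action since $g(u)=F_g(u)$); on the full ring $\Bt_K^{[r,s]}$ the Galois action is \emph{not} uniformly close to the identity in operator norm no matter how small $\Gamma_n$ is, so the smallness claim must be made on the subalgebra. Finally, note that the detour through pro-analyticity in $\Bt_{\rig,K}^{\dagger}$ over all annuli $[r,s]$ is unnecessary: once the estimate is in place on a single annulus $[r,r]$, one gets $u\in(\Bt_K^{[r,r]})^{\Gamma_K-\la}$ and Lemma~\ref{lemm locana is the same} concludes directly.
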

\begin{proof}
By lemma \ref{lemma coeffs mahler h(T) in terms of T}, we obtain that  $|\!|\Delta_g^n(u))|\!|_r \leq |\!|\Delta_g(u)|\!|_r^n$ and we can now apply proposition 2.3 of \cite{porat2024locally}, which shows that $u$ is locally analytic for the action of $\Gamma_K$.
\end{proof}

\section{Structure of locally analytic vectors in $\At^\dagger$ for $\Z_p$-extensions}
\label{section struc locana Zp}
In this section, we assume that $K_\infty/K$ is a totally ramified $\Zp$-extension, with Galois group $\Gamma_K \simeq \Zp$. The goal of this section is to prove that if there are nontrivial locally analytic vectors in $\At_K^\dagger$, that is if $(\At_K^\dagger)^{\Gamma_K-\la} \neq \O_{K}$, then everything behaves just as if $K_\infty/K$ was the cyclotomic extension. 

Let $K_\infty/K$ be a totally ramified $\Zp$-extension, with Galois group $\Gamma_K \simeq \Zp$. We assume furthermore that $(\At_K^\dagger)^{\la} \neq \O_{K}$, which means that it contains a nontrivial locally analytic vector. For $n \geq 1$ we let $K_n/K$ be the subextension of $K_\infty/K$ such that $\Gal(K_n/K) = \Z/p^n\Z$ and we let $\Gamma_n = \Gal(K_\infty/K_n) \subset \Gamma_K$. We also let $H_K = \Gal(\overline{K}/K_\infty)$. Note that, up to extending the field $K$, we can always assume without loss of generality that $K/\Qp$ is Galois, and we do so in what follows.

We let $s : \At \ra \At/\pi\At \simeq \Et$ denote the projection map given by the reduction modulo $\pi$. Note that it induces by restriction projections that we will still denote by $s$: $\At_K \ra \Et_K$, $\At^\dagger \ra \Et$ and $\At_K^\dagger \ra \Et_K$ and whose kernel is still generated by $\pi$. 

\begin{prop}
\label{prop exists lambda included in FON}
We have $(\Et_K)^{\Gamma_K,0-\an} \subset \E_K$, and we have $(\Et_K)^{\Gamma_K-\la} = \bigcup_{n \geq 0}\phi_q^{-n}(\E_K)$.
\end{prop}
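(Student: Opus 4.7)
The main tool is the identity $(\gamma-1)^{p^n} = \gamma^{p^n}-1$, valid in characteristic $p$ (since $p$ is odd, the middle binomials vanish in $\F_p$). Fix a topological generator $\gamma$ of $\Gamma_K$ with coordinate $\cbf(\gamma) = 1$; the $n$-th Mahler coefficient of the orbit map $g \mapsto g(x)$ is then $x_n = (\gamma-1)^n(x)$, and the $0$-analyticity of $x$, specialized at $n = p^k$, reads
\[ v_\E\bigl(\gamma^{p^k}(x) - x\bigr) \geq p^k + \mu \qquad (k \geq 0), \]
for some $\mu \in \R$. For the inclusion $(\Et_K)^{\Gamma_K, 0-\an} \subset \E_K$, I argue by contradiction. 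The ring $\Et_K$ is the $v_\E$-adic completion of the perfection of $\E_K$, namely $\bigcup_{m \geq 0}\phi^{-m}(\E_K)$, which coincides with $\bigcup_{n \geq 0}\phi_q^{-n}(\E_K)$ since $\phi_q = \phi^f$. Any $x \in (\Et_K)^{\Gamma_K, 0-\an} \setminus \E_K$ can therefore be approximated to arbitrary $v_\E$-accuracy by an element $y \in \phi^{-m}(\E_K) \setminus \phi^{-(m-1)}(\E_K)$ with $m \geq 1$, say $y = \phi^{-m}(w)$ with $w \in \E_K \setminus \phi(\E_K)$, i.e.\ $w'(u) \neq 0$ (formal derivative). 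Since $\gamma$ acts by isometries and $v_\E \circ \phi^{-m} = p^{-m} v_\E$, one gets $v_\E(\gamma^{p^k}(y) - y) = p^{-m} v_\E(\gamma^{p^k}(w) - w)$. The formal-derivative expansion $\gamma^{p^k}(w) - w \equiv w'(u) \cdot (F_{\gamma^{p^k}}(u) - u)$ modulo $(F_{\gamma^{p^k}}(u)-u)^2$, combined with a linear upper bound $v_\E(F_{\gamma^{p^k}}(u)-u) \leq p^k + O(1)$ on the Galois action on $u$, gives $v_\E(\gamma^{p^k}(y) - y) \leq p^{k-m} + O(1)$. For $m \geq 1$ and $k \to \infty$ this contradicts the required bound $p^k + \mu$, so no such $y$ exists and $x \in \E_K$.

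The equality $(\Et_K)^{\Gamma_K-\la} = \bigcup_{n \geq 0} \phi_q^{-n}(\E_K)$ then follows from a characteristic-$p$ analog of lemma \ref{lemm phi shift locana}: the relation $v_\E \circ \phi_q = q \cdot v_\E$ translates into the equivalence that $x \in \Et_K$ is $\lambda$-analytic for $\Gamma_K$ if and only if $\phi_q(x)$ is $(\lambda + f)$-analytic. Any locally analytic $x$ thus has $\phi_q^n(x)$ which is $0$-analytic for $n$ large, which the first inclusion places in $\E_K$, so $x \in \phi_q^{-n}(\E_K)$. Conversely, one verifies $\E_K \subset (\Et_K)^{\Gamma_K, 0-\an}$ by showing the uniformizer $u$ of $\E_K$ is itself $0$-analytic---using the matching \emph{lower} bound $v_\E(\gamma^{p^k}(u) - u) \geq p^k + \mu_u$, and lemma \ref{lemma lambda an ring} to propagate $0$-analyticity from $u$ to the whole field of norms---and the Frobenius-shift equivalence then exhibits $\phi_q^{-n}(\E_K)$ as consisting of $(-nf)$-analytic vectors.

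The main obstacle is the two-sided bound $v_\E(\gamma^{p^k}(u) - u) = p^k + O(1)$: its lower half forces $\E_K$ into the $0$-analytic vectors, while its upper half rules out additional $0$-analytic vectors coming from the perfection. This two-sided control on the $\Gamma_K$-action on $u$ is where the standing hypothesis $(\At_K^\dagger)^{\la} \neq \O_K$ of section \ref{section struc locana Zp} is essential: reducing a nontrivial locally analytic vector of $\At_K^\dagger$ modulo $\pi$ produces structural information on the action of $\Gamma_K$ on the uniformizer $u$, pinning down its speed of approach to the identity as exactly linear in $p^k$. Without this hypothesis, the $\Z_p$-extension could be ``too wild'' (so $\E_K$ contains no nontrivial $0$-analytic vectors) or the analytic hierarchy $\phi_q^{-n}(\E_K)$ could fail to exhaust $(\Et_K)^{\Gamma_K-\la}$.
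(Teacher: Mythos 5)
The paper does not actually prove this statement: its ``proof'' is a citation to Theorem 2.2.3 of Berger--Rozensztajn \cite{berger2022super}, so your proposal has to be judged as an attempt to reprove that theorem. Your ingredients are the right ones, and they are the ones used there: the characteristic-$p$ identity $(\gamma-1)^{p^k}=\gamma^{p^k}-1$ (which holds for $p=2$ as well --- all middle binomial coefficients $\binom{p^k}{i}$ are divisible by $p$), reducing everything to estimates on $v_{\E}(\gamma^{p^k}(x)-x)$; a two-sided bound $v_{\E}(\gamma^{p^k}(u)-u)=p^k\cdot c+O(1)$; and the Frobenius shift $v_{\E}\circ\phi_q=q\,v_{\E}$. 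The easy inclusion $\bigcup_n\phi_q^{-n}(\E_K)\subset(\Et_K)^{\Gamma_K-\la}$, and the exclusion of $\phi^{-m}(\E_K)\setminus\E_K$ from the $0$-analytic vectors, are sound at this level of detail.

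There are two genuine problems. First, the hard inclusion breaks exactly where you pass from the perfection $\bigcup_m\phi^{-m}(\E_K)$ to its completion $\Et_K$. The $0$-analyticity of $x$ only yields $v_{\E}(\gamma^{p^k}(y)-y)\geq\min(p^k+\mu,N)$ for an approximant $y$ with $v_{\E}(x-y)\geq N$, so the lower bound you want to contradict is available only for $p^k\lesssim N$; meanwhile your upper bound $p^{k-m}+O(1)$ for $y=\phi^{-m}(w)$ has constants --- the level $m$, the valuation $v_{\E}(w'(u))$, and the threshold in $k$ past which the formal-derivative linearization dominates the quadratic terms --- all depending on $y$, and none of them is uniformly controlled as $N\to\infty$ (for instance $v_{\E}(w'(u))$ can be arbitrarily large when $w$ is a deep tail of $x$). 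Moreover, for $x=\sum_j x_j u^j$ summed over $j\in\Z[1/p]\cap[0,1)$, the element $(\gamma^{p^k}-1)(x_ju^j)$ does not live in the single graded piece $\E_K u^j$, so contributions from different levels can cancel; ruling this out is precisely the delicate part of Berger--Rozensztajn's argument, and your sketch does not address it. Second, a conceptual error: the two-sided bound on $v_{\E}(\gamma^{p^k}(u)-u)$ does \emph{not} come from the standing hypothesis $(\At_K^\dagger)^{\la}\neq\O_K$ of \S\ref{section struc locana Zp}. It is an unconditional consequence of Sen's ramification theorem and Wintenberger's theory of strictly APF extensions, valid for every totally ramified $\Zp$-extension; accordingly the proposition itself is unconditional and holds, for example, for the anticyclotomic extension, where the hypothesis fails. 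Your closing paragraph, which presents the hypothesis as essential to pinning down the speed of the $\Gamma_K$-action on $u$, inverts the logic of the paper: the proposition is an external input (from \cite{berger2022super}), not a consequence of the existence of locally analytic vectors upstairs.
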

\begin{proof}
This is theorem 2.2.3 of \cite{berger2022super} since $\dim \Gamma_K = 1$. 
\end{proof}

In what follows, we choose the smallest integer $\lambda$ such that $(\Et_K)^{\Gamma_K,\lambda-\an} \subset \E_K$. In particular, $\lambda \leq 0$.

\begin{coro}
\label{lemma image mod p of AtKdaggerla}
We have $s((\At_K^\dagger)^{\Gamma_K,\lambda-\an}) \subset \E_K$ and $s((\At_K^\dagger)^{\Gamma_K-\la}) \subset \phi_q^{-\infty}(\E_K)$.
\end{coro}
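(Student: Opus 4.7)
The plan is to reduce the statement to the corresponding assertions for $\Et_K$ (Proposition 5.3) by showing that the reduction map $s$ sends $\lambda$-analytic vectors of $\At_K^\dagger$ to $\lambda$-analytic vectors of $\Et_K$ with the same parameter. Both inclusions then follow from Proposition 5.3 and the defining property of $\lambda$.

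First, I would use the fact that $s : \At_K^{(0,\rho]} \to \Et_K$ is a continuous, $\Gamma_K$-equivariant ring homomorphism. If $x \in (\At_K^{(0,\rho]})^{\Gamma_K, \lambda-\an, \mu}$ with Mahler expansion $g \cdot x = \sum_{n \geq 0} \binom{\c(g)}{n} a_n(x)$, then applying $s$ termwise yields $g \cdot s(x) = \sum_{n \geq 0} \binom{\c(g)}{n} s(a_n(x))$. By uniqueness of Mahler coefficients, the $s(a_n(x))$ are precisely the Mahler coefficients of $s(x) : \Gamma_K \to \Et_K$.

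Next, I would establish the valuation comparison $v_\E(s(y)) \geq v_\rho(y)$ for $y \in \At^{(0,\rho]}$. Writing $y = \sum_{k \geq 0} \pi^k [y_k]$, restricting the defining infimum of $V(\cdot, r)$ to the $k = 0$ term gives $V(y,r) \leq \frac{p-1}{pr} v_\E(y_0)$; combined with the normalization $v_\rho = \frac{r}{r_0} V(\cdot,r)$ and $r_0 = (p-1)/p$, this gives the claimed inequality. Therefore $v_\E(s(a_n(x))) \geq v_\rho(a_n(x)) \geq p^\lambda \cdot p^{\lfloor \log_p n \rfloor} + \mu$, so $s(x) \in (\Et_K)^{\Gamma_K, \lambda-\an}$, and the defining property of $\lambda$ (as the smallest integer with $(\Et_K)^{G_0, \lambda-\an} \subset \E_K$) gives $s(x) \in \E_K$.

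For the second inclusion, an element $x \in (\At_K^\dagger)^{\Gamma_K-\la}$ lies in $(\At_K^{(0,\rho]})^{\Gamma_m, \lambda'-\an, \mu'}$ for some $\rho, m$ and some $\lambda', \mu' \in \R$ (with $\lambda'$ possibly larger than $\lambda$). Running the same argument gives $s(x) \in (\Et_K)^{\Gamma_m, \lambda'-\an} \subset (\Et_K)^{\Gamma_K-\la} = \phi_q^{-\infty}(\E_K)$, the last equality being part of Proposition 5.3. I do not anticipate any real obstacle: the crux is the normalization check and the transfer of the Mahler expansion through $s$, both of which are routine.
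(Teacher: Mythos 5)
Your proof is correct and follows essentially the same route as the paper: reduce modulo $\pi$, check that the induced valuation on $\Et_K$ dominates $v_\rho$ so that $s$ preserves $\lambda$-analyticity, and conclude by Proposition \ref{prop exists lambda included in FON}. The only (cosmetic) difference is that for the second inclusion you invoke the identity $(\Et_K)^{\Gamma_K-\la}=\phi_q^{-\infty}(\E_K)$ directly, whereas the paper reduces to the first inclusion via the $\phi_q$-shift of Lemma \ref{lemm phi shift locana}; the two are equivalent.
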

\begin{proof}
Let $x \in (\At_K^\dagger)^{\Gamma_K,\lambda-\an}$. Then $s(x) \in \At_K^\dagger/\pi\At_K^\dagger \simeq \Et_K$ is $\lambda$-analytic (for $\Gamma_K$) for the valuation induced on $\Et_K$ by the one on $\At_K^\dagger$. Proposition \ref{prop exists lambda included in FON} shows that $s(x) \in \E_K$, so this proves the first part of the corollary. The second part comes from the fact that $x \in \At^{(0,\rho]}$ is $\kappa$-analytic (for $\Gamma_K$) if and only if $\phi_q^\ell(x)$ is $(\kappa-f\ell$)-analytic (for $\Gamma_K$) by lemma \ref{lemm phi shift locana}.   
\end{proof}

\begin{lemm}
\label{lemm exists nontrivial element}
There exists $x \in (\At_K^\dagger)^{\Gamma_K,\lambda-\an}$ whose image by $s$ belongs to $\E_K^+ \setminus k_K$.
\end{lemm}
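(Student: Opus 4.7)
The plan is to start from a nontrivial $z_0 \in (\At_K^\dagger)^{\Gamma_K-\la} \setminus \O_K$ (which exists by hypothesis) and modify it in two stages: first to push its image under $s$ outside $k_K$, then to apply a Frobenius iterate to reach $\lambda$-analyticity.

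For the first stage, I would iteratively set $z_{n+1} = (z_n - [s(z_n)])/\pi$ whenever $s(z_n) \in k_K$. Each such step stays in $\At_K^\dagger$ (the difference $z_n - [s(z_n)]$ has trivial image mod $\pi$, hence lies in $\pi\At_K^\dagger$), preserves $H_K$-invariance (as $[s(z_n)] \in W(k_K) \subset \O_K$ is $H_K$-fixed), and preserves the $(\Gamma_K,\lambda)$-analyticity class (Teichmüller lifts of $k_K$-elements are $\Gamma_K$-fixed, and dividing by $\pi$ only translates the parameter $\mu$ without changing $\lambda$). The procedure must terminate at some $z_N$ with $s(z_N) \notin k_K$: otherwise $z_0 = \sum_{n \geq 0}\pi^n[s(z_n)]$ would converge $\pi$-adically to an element of $\O_K$ (each Teichmüller lies in $W(k_K) \subset \O_K$, which is $\pi$-adically complete, while $\At_K^\dagger$ is $\pi$-adically separated), contradicting $z_0 \notin \O_K$. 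Next, I would apply $\phi_q^n$ for $n$ large enough, using Lemma \ref{lemm phi shift locana}, so that $x := \phi_q^n(z_N)$ becomes $\lambda$-analytic (the analyticity index is boosted by $nf$). Corollary \ref{lemma image mod p of AtKdaggerla} then forces $s(x) \in \E_K$, and $s(x) = s(z_N)^{q^n}$ still avoids $k_K$ since $k_K$ is perfect of characteristic $p$ (any $q^n$-th root of an element of $k_K$ lies in $k_K$).

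The step I expect to be the main obstacle is arranging $v_E(s(x)) \geq 0$, which is needed to conclude $s(x) \in \E_K \cap \Et_K^+ = \E_K^+$. The natural route is to choose $z_0$ in the integral subring $\At_K^{\dagger,r}$ (without $[\overline{u}]$-denominators) rather than in $\At_K^{(0,\rho]} = \At_K^{\dagger,r}[1/[\overline{u}]]$: then $s(z_0) = a_0 \in \Et_K^+$ by Lemma \ref{lemma v in r not just rho}, and one would verify that each iteration $z_n \mapsto z_{n+1}$ remains in $\At_K^{\dagger,r'}$ for some possibly enlarged radius $r'$, so that $v_E(s(z_N)) \geq 0$ and hence $v_E(s(x)) = q^n v_E(s(z_N)) \geq 0$. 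Should $z_N$ instead have $v_E(s(z_N)) < 0$ (which can only happen if essential $[\overline{u}]$-denominators intervene), the fallback is to replace $z_N$ by its inverse: $z_N^{-1}$ is a unit in some $\At_K^{(0,\rho']}$ and remains locally analytic via the geometric series for $(1+\pi w)^{-1}$, while $v_E(s(z_N^{-1})) = -v_E(s(z_N)) > 0$ provides the required positivity. In either case, combining the construction yields the desired $x \in (\At_K^\dagger)^{\Gamma_K,\lambda-\an}$ with $s(x) \in \E_K^+ \setminus k_K$.
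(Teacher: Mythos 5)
Your proposal is correct and follows essentially the same route as the paper: starting from a nontrivial locally analytic element, repeatedly subtracting a constant from $\O_K$ and dividing by $\pi$ until the reduction leaves $k_K$, applying a large power of $\phi_q$ to reach $\lambda$-analyticity, and passing to the inverse if needed to land in $\E_K^+$. Your termination argument via $\pi$-adic separatedness and your observation that $s(x)=s(z_N)^{q^n}$ stays outside the perfect field $k_K$ are just more explicit versions of what the paper leaves implicit.
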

\begin{proof}
Since $(\At_K^\dagger)^{\la}$ is non trivial, there exists $z \in (\At_K^\dagger)^{\la}$ not in $\O_K$. In particular, writing $z = \sum_{k \geq 0}\pi^k[z_k]$, at least one of the $z_k$ does not belong to $k_K$. Let $j$ denote the smallest such $k$ and let $\tilde{z} = \sum_{k = 0}^{j-1}\pi^k[z_k]$, so that $\tilde{z} \in \O_K$ by assumption. Then $z-\tilde{z}$ is divisible by $\pi^j$, and locally analytic, so that $y:=\frac{z-\tilde{z}}{\pi^j}$ is also locally analytic, and its image modulo $\pi$ is $z_k$ which does not belong to $k_K$. 

We can assume up to replacing $y$ by its inverse that $s(y)$ has nonnegative valuation so that it belongs to $\Etplus$: since $s(y) \neq 0$, the inverse of $y$ belongs to $\At^\dagger$ and not just $\Bt^\dagger$, and its inverse is locally analytic by lemma \ref{lemma invertible loc ana}. 

By lemma \ref{lemma Gulotta cangobackto GammaK}, there exists $\kappa \in \R$ such that $y \in (\At_K^\dagger)^{\Gamma_K,\kappa-\an}$. By applying $\phi_q^\ell$ to this element for $\ell \gg 0$, we find using lemma \ref{lemm phi shift locana} that there exists $x \in (\At_K^\dagger)^{\Gamma_K,\lambda-\an}$ whose image by $s$ is an element of $\E_K \setminus k_K$, and since $y \in \Etplus$ then $s(x) = \phi_q^{\ell}(s(y))$ belongs to $\E_K \cap \Etplus = \E_K^+$. 
\end{proof}

\begin{defi}
\label{def of lift of unif of A_K}
We let $\alpha := \min\{v_{\E}(s(x)): x \in (\At_K^\dagger)^{\Gamma_K,\lambda-\an} \textrm{ and } v_{\E}(s(x)) > 0\}$. 

Note that the set of elements $x$ in $(\At_K^\dagger)^{\Gamma_K,\lambda-\an}$ such that the valuation of $s(x)$ is nonzero is nonempty by lemma \ref{lemm exists nontrivial element}, and that the set of $s(x)$ for $x \in (\At_K^\dagger)^{\Gamma_K,\lambda-\an}$ is included in $\E_K^+$ by corollary \ref{lemma image mod p of AtKdaggerla}. Since the valuation on $\E_K^+$ is discrete, this means that $\alpha$ is well defined, and that the minimum is reached for some element in $(\At_K^\dagger)^{\Gamma_K,\lambda-\an}$ which will be denoted by $v$.

Since $\alpha = v_{\E}(s(v)) > 0$, the sequence $(v^n)_{n \geq 0}$ goes to $0$ in $\At_K$ for the $(\pi,[s(v)])$-adic topology (for which $\At$ and $\At_K$ are complete), and thus $\O_{K}(\!(v)\!)$ is naturally a subring of $\At_K$. We let $\A_K$ denote the $\pi$-adic completion of $\O_{K}(\!(v)\!)$ in $\At_K$ (we recall that $\At_K$ is $\pi$-adically complete).
\end{defi}

In the definition \ref{def of lift of unif of A_K} above, we can thanks to lemma \ref{lemma v in r not just rho} assume that our choice of $v$ satisfies the additional assumption that there exists $n \geq 0$ such that $v \in \At^{\dagger,r_n}$ and such that $\frac{v}{[s(v)]}$ belongs to $\At^{\dagger,r_n}$ and is a unit of this ring. In order to avoid additional notations we write $r$ for $r_n$ in the rest of this section.

\begin{lemm}
\label{lemm s(ana) is power series in v}
We have $s((\At_K^\dagger)^{\Gamma_K,\lambda-\an}) \subset k_K(\!(s(v))\!)$.
\end{lemm}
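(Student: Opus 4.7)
The plan is a greedy $s(v)$-adic expansion argument, driven by the minimality built into the definition of $\alpha$.

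\textbf{Reduction.} Let $y \in (\At_K^\dagger)^{\Gamma_K,\lambda-\an}$. By Corollary \ref{lemma image mod p of AtKdaggerla}, $s(y) \in \E_K$. Since $v \in (\At_K^\dagger)^{\Gamma_K,\lambda-\an}$ and this space is a ring by Lemma \ref{lemma lambda an ring}, one has $v^N y \in (\At_K^\dagger)^{\Gamma_K,\lambda-\an}$ for every $N \geq 0$, with $v_{\E}(s(v^N y)) = N\alpha + v_{\E}(s(y))$. Choosing $N$ large enough I reduce to the case $s(y) \in \E_K^+$; if one then establishes $s(v^N y) \in k_K[\![s(v)]\!]$, dividing by $s(v)^N$ inside the field $k_K(\!(s(v))\!)$ gives the lemma in general.

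\textbf{Greedy construction.} Starting from $y_0 := y$ with $v_{\E}(s(y_0)) \geq 0$, I inductively construct elements $y_n \in (\At_K^\dagger)^{\Gamma_K,\lambda-\an}$ and scalars $a_n \in k_K$ of the form $y_n = y - \sum_{k<n}[a_k]v^k$, maintaining the induction hypothesis that $s(y_n) \in \E_K^+$ with $v_{\E}(s(y_n)) \geq n\alpha$. When $v_{\E}(s(y_n)) = n\alpha$, the element $s(y_n)/s(v)^n$ is a unit of $\E_K^+$ whose reduction in $k_K$ is some $a_n \in k_K^\times$, which I lift to $[a_n] \in \O_K$ and subtract off as $y_{n+1} := y_n - [a_n]v^n$. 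When $v_{\E}(s(y_n)) > n\alpha$, I set $a_n := 0$ and $y_{n+1}:=y_n$. Passing to the limit (for the discrete valuation topology on $\E_K^+$) yields a series $\sum_{n\geq 0} a_n s(v)^n$ converging to $s(y)$, hence $s(y) \in k_K[\![s(v)]\!]$ as desired.

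\textbf{Main obstacle.} The inductive step requires $v_{\E}(s(y_{n+1})) \geq (n+1)\alpha$, i.e., that no element of $s\bigl((\At_K^\dagger)^{\Gamma_K,\lambda-\an}\bigr)$ has $v_{\E}$-valuation in the open gap $(n\alpha,(n+1)\alpha)$. For $n=0$ this is exactly the minimality defining $\alpha$. For $n\geq 1$, the natural approach is by contradiction: a hypothetical offender $z \in (\At_K^\dagger)^{\Gamma_K,\lambda-\an}$ with $v_{\E}(s(z))$ in $(n\alpha,(n+1)\alpha)$ would produce through $z \cdot v^{-n}$ a $\lambda$-analytic element of $s$-valuation strictly between $0$ and $\alpha$, contradicting the definition of $\alpha$. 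Realizing $z \cdot v^{-n}$ concretely inside $(\At_K^\dagger)^{\Gamma_K,\lambda-\an}$ is the hard part: using the factorization $v = [s(v)] \cdot w$ with $w \in (\At^{\dagger,r})^\times$ provided by Lemma \ref{lemma v in r not just rho}, together with the fact that $[s(v)^{-1}]$ lies in $\At^{(0,\rho]}$ for $\rho$ large enough (since a sufficiently high power of the invertible $[\overline{u}]$ brings $[s(v)^{-1}]$ into $\Atplus$), one obtains $v^{-n} \in \At_K^\dagger$. Preserving $\lambda$-analyticity under inversion of $w$ and under the Teichmüller shift $[s(v)]^{-n}$, however, requires an integral analogue of \cite[Lemma 2.5]{Ber14SenLa} on locally analytic inverses, possibly combined with the freedom in Lemma \ref{lemma v in r not just rho}(2) to choose $v/[s(v)]$ as a unit of $\At^{\dagger,r}$; this is the main technical subtlety of the argument.
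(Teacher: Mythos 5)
Your proposal is correct and follows essentially the same route as the paper: the paper also performs a greedy $s(v)$-adic expansion, ruling out valuations in the gaps $(n\alpha,(n+1)\alpha)$ by dividing by powers of $v$ and invoking the minimality of $\alpha$. The ``main technical subtlety'' you flag (that multiplication by $v^{-q_0}$ preserves $\lambda$-analyticity) is dispatched in the paper simply by Lemma \ref{lemma lambda an ring} together with the fact, already used in Lemma \ref{lemm exists nontrivial element}, that the inverse of $v$ lies in $\At_K^\dagger$ and is again locally analytic.
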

\begin{proof}
Let $x \in (\At_K^\dagger)^{\Gamma_K,\lambda-\an}$. By corollary \ref{lemma image mod p of AtKdaggerla}, we know that $s(x) \in \E_K$. Let $k = v_{\E}(s(x))$, and let $k=q_0\alpha+r$ be the euclidean division of $k$ by $\alpha$. We have $v^{-q_0}x \in (\At_K^\dagger)^{\Gamma_K,\lambda-\an}$ by lemma \ref{lemma lambda an ring}, and $0 \leq v_{\E}(s(v^{-q_0}x)) = r < \alpha$, so that $r=0$ by definition of $\alpha$. There exists therefore $c_0 \in k_K$ such that $z:=[c_0]v^{q_0}$ satisfies $v_{\E}(s(x)-s(z)) > v_{\E}(s(x))$. Now $x-z \in (\At_K^\dagger)^{\Gamma_K,\lambda-\an}$ and thus we can apply the same reasoning to $x-z$ instead of $x$. This yields $c_1 \in k_K$ and $q_1 > q_0$ such that $z_1:=x-[c_0]v^{q_0}+[c_1]v^{q_1}$ is such that $v_{\E}(s(z_1)) > v_{\E}(s(x-z))$. Applying the same process inductively gives us $(q_i)_{i \geq 0} \in \Z^\N$ an increasing sequence and $(c_i)_{i \geq 0} \in k_K^\N$ such that $s(x) = \sum_{i=0}^{+\infty} c_is(v)^{q_i}$ and thus $s(x) \in k_K(\!(s(v))\!)$. 
\end{proof}

\begin{lemm}
\label{lemma locana inside padic compl of OK((v))}
We have $(\At_K^\dagger)^{\Gamma_K,\lambda-\an} \subset \A_K$.
\end{lemm}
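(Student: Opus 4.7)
The plan is to construct $x$ as a $\pi$-adically convergent expansion $x = \sum_{n \geq 0} \pi^n y_n$ with each $y_n \in \A_K$, by iterating Lemma \ref{lemm s(ana) is power series in v} on successive remainders. Given $x \in (\At_K^\dagger)^{\Gamma_K,\lambda-\an}$, Lemma \ref{lemm s(ana) is power series in v} yields $s(x) \in k_K(\!(s(v))\!)$; following the inductive construction in that proof, write $s(x) = \sum_{i \geq 0} c_i s(v)^{q_i}$ with $c_i \in k_K$ and $q_i \in \Z$ strictly increasing to $+\infty$, and set $y_0 := \sum_{i \geq 0}[c_i] v^{q_i}$. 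Because $v/[s(v)]$ is a unit of $\At^{\dagger,r}$ by the choice of $v$, one has $V(v,r) = V([s(v)],r) > 0$, so $V([c_i] v^{q_i}, r) \geq q_i V(v,r) \to +\infty$ and the series converges in $\At_K^{\dagger,r}$; its limit lies in $\A_K$ and reduces modulo $\pi$ to $s(x)$.

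The main obstacle is to verify that $y_0$ belongs to $(\At_K^\dagger)^{\Gamma_K-\la}$. Each partial sum is a finite Laurent polynomial in $v$ and hence $\lambda$-analytic by Lemma \ref{lemma lambda an ring}. To pass to the limit one needs a quantitative strengthening of that lemma: following Gulotta's argument in \cite[Lem. 3.3.1]{gulotta2019equidimensional}, the product of two $(\lambda,\mu)$-analytic elements is $(\lambda,\mu')$-analytic with an explicit $\mu'$, from which it follows that $v^{q_i}$ lies in $(\At_K^{(0,\rho']})^{\Gamma_K,\lambda-\an,\mu_i}$ (shrinking $\rho$ if needed) with $\mu_i$ growing linearly in $q_i$. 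Since the $(\lambda,\mu)$-analytic vectors form a Banach space that is closed in $\At_K^{(0,\rho']}$, the partial sums converge there and $y_0$ is itself $\lambda$-analytic.

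With $y_0 \in \A_K \cap (\At_K^\dagger)^{\Gamma_K-\la}$ in hand, set $x_1 := (x - y_0)/\pi$. Then $x_1 \in \At_K$ because $s(x - y_0) = 0$, and $x_1$ remains in $(\At_K^\dagger)^{\Gamma_K,\lambda-\an}$ since division by $\pi$ only shifts the $\mu$-bound while preserving the $\lambda$-class. Iterating the same procedure on $x_1, x_2, \ldots$ produces sequences $(y_n)_{n \geq 0} \subset \A_K \cap (\At_K^\dagger)^{\Gamma_K-\la}$ and $(x_n)_{n \geq 0} \subset (\At_K^\dagger)^{\Gamma_K,\lambda-\an}$ satisfying $x = \sum_{j=0}^{n-1} \pi^j y_j + \pi^n x_n$ for all $n \geq 0$. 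Since $\A_K$ is closed in $\At_K$ for the $\pi$-adic topology, the partial sums converge in $\A_K$ to $x$, so $x \in \A_K$.
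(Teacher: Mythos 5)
Your argument is correct and follows essentially the same route as the paper: at each stage you peel off a Laurent series in $v$ with coefficients in $\O_K$ lifting the reduction modulo $\pi$ via Lemma \ref{lemm s(ana) is power series in v}, divide the remainder by $\pi$, and conclude by $\pi$-adic completeness of $\A_K$ inside $\At_K$. Your second paragraph merely makes explicit (via uniform $\mu$-constants and closedness of the $(\lambda,\mu)$-analytic vectors) a point the paper's proof passes over silently, namely that the infinite series $y_0=P_0(v)$ is itself $\lambda$-analytic so that the quotient $x_1$ stays in the $\lambda$-analytic class.
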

\begin{proof}
Let $x \in (\At_K^\dagger)^{\Gamma_K,\lambda-\an}$. By lemma \ref{lemm s(ana) is power series in v}, we know that $s(x) \in k_K(\!(s(v))\!)$. Therefore, there exists $P_0(T) \in O_K(\!(T)\!)$ such that $x-P_0(v) \in \pi\At_K^{\dagger}$. Moreover, since $x,v \in (\At_K^\dagger)^{\Gamma_K,\lambda-\an}$, this implies that $x-P_0(v) \in (\At_K^\dagger)^{\Gamma_K,\lambda-\an} \cap \pi\At_K^\dagger = \pi(\At_K^\dagger)^{\Gamma_K,\lambda-\an}$. Let $z = \frac{x-P_0(v)}{\pi} \in (\At_K^\dagger)^{\Gamma_K,\lambda-\an}$. Then applying the same process for $z$ instead of $x$ yields $P_1(T) \in O_K(\!(T)\!)$ such that $x-P_0(v)-\pi\cdot P_1(v) \in \pi^2\At_K^{\dagger}$. Inductively, we find a sequence $(P_i(T))_{i \geq 0}$ of elements of $O_K(\!(T)\!)$ such that $x = \sum_{i=0}^{\infty}\pi^i\cdot P_i(v)$, and this series converges in $\At_K$ since it is $\pi$-adically complete, to an element of $\A_K$ by definition of $\A_K$. 
\end{proof}

\begin{lemm}
We have $(\At_K^\dagger)^{\Gamma_K-\la} \subset \phi_q^{-\infty}(\A_K)$. 
\end{lemm}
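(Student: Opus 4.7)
The plan is to reduce the statement to the already established Lemma \ref{lemma locana inside padic compl of OK((v))}, which only treats vectors that are $\lambda$-analytic for the specific integer $\lambda$ fixed after Proposition \ref{prop exists lambda included in FON}. Given a general locally analytic vector $x \in (\At_K^\dagger)^{\Gamma_K-\la}$, there exist $\rho > 0$ and $\kappa \in \R$ such that $x \in (\At_K^{(0,\rho]})^{\Gamma_K,\kappa-\an}$. The idea is to apply iterated Frobenius to push the analyticity index of $x$ up to the threshold $\lambda$ at the cost of replacing $x$ by $\phi_q^\ell(x)$, which does not matter since we are content to land in $\phi_q^{-\infty}(\A_K)$.

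Concretely, I would iterate Lemma \ref{lemm phi shift locana} to deduce that for every $\ell \geq 0$,
\[
\phi_q^\ell(x) \in \bigl(\At_K^{(0,p^{-\ell}\rho]}\bigr)^{\Gamma_K,(\kappa+f\ell)-\an}.
\]
Next I would invoke the monotonicity of the analyticity spaces: because the defining condition $\valM(a_{\underline{n}}(f)) \geq p^\mu \cdot p^{\lfloor \log_p |\underline{n}|_\infty\rfloor} + \nu$ becomes more restrictive as $\mu$ grows, one has $\cal{C}^{\an-\mu'} \subset \cal{C}^{\an-\mu}$ whenever $\mu' \geq \mu$, and the same inclusion holds after pulling back to $\Gamma_K$ via $\c$. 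Choosing $\ell$ large enough so that $\kappa + f\ell \geq \lambda$, it follows that $\phi_q^\ell(x)$ belongs to $(\At_K^\dagger)^{\Gamma_K,\lambda-\an}$. Lemma \ref{lemma locana inside padic compl of OK((v))} then yields $\phi_q^\ell(x) \in \A_K$, and therefore $x \in \phi_q^{-\ell}(\A_K) \subset \phi_q^{-\infty}(\A_K)$, which is what we want.

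There is essentially no obstacle here beyond a careful bookkeeping of sign conventions: one must verify that $\phi_q$ shifts the analyticity index upward (as recorded by Lemma \ref{lemm phi shift locana}, which itself comes from $v_{p^{-1}\rho}(\phi_q(x)) = q v_\rho(x)$), and that the inclusion of the $\cal{C}^{\an-\mu}$ spaces in $\mu$ goes in the direction claimed. All of the genuine content — the use of Berger--Rozensztajn's description of the locally analytic vectors of $\Et_K$, the construction of the uniformizer $v$ via the minimal valuation $\alpha$, and the bootstrap argument upgrading a mod-$\pi$ power series expansion to an element of the $\pi$-adic completion $\A_K$ — has already been absorbed into Lemma \ref{lemma locana inside padic compl of OK((v))}, so the extension to arbitrary locally analytic vectors is purely formal.
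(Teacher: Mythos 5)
Your argument is essentially identical to the paper's: iterate Lemma \ref{lemm phi shift locana} to raise the analyticity index past $\lambda$, then apply Lemma \ref{lemma locana inside padic compl of OK((v))} and pull back by $\phi_q^{-\ell}$. The only elision is at the very start: membership in $(\At_K^\dagger)^{\Gamma_K-\la}$ a priori only gives $\kappa$-analyticity for some open subgroup $\Gamma_m$, and passing to $\kappa'$-analyticity for the full group $\Gamma_K$ (which your argument assumes) requires the equivalence the paper cites from Berger--Rozensztajn (lemma 1.10 of that reference); with that reference added your proof matches the paper's.
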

\begin{proof}
Let $x \in (\At_K^\dagger)^{\Gamma_K-\la}$. Therefore there exists $m \geq 0$ and $\mu \in \R$ such that $x \in (\At_K^\dagger)^{\Gamma_m,\mu-\an}$. Note that by lemma 1.10 of \cite{BerRozdecompletion}, this is equivalent to the existence of $\mu' \in \R$ such that $x \in (\At_K^\dagger)^{\Gamma_K,\mu'-\an}$. If $k$ is an integer such that $kh+\mu' \geq \lambda$, then $\phi_q^k(x) \in (\At_K^\dagger)^{\Gamma_K,(\mu'+kf)-\an} \subset (\At_K^\dagger)^{\Gamma_K,\lambda-\an}$ so that $\phi_q^k(x) \in \A_K$ by lemma \ref{lemma locana inside padic compl of OK((v))}, and thus $x \in \phi_q^{-k}(\A_K)$. 
\end{proof}

Recall that $\cal{A}_K^{\dagger,s}$ denotes the set of Laurent series $\sum_{k \in \Z}a_kT^k$ with coefficients in $\O_K$ such that $v_p(a_k)+ks/e \geq 0$ for all $k \in \Z$ and such that $v_p(a_k)+ks/e \rightarrow +\infty$ when $k \rightarrow -\infty$. 

Recall also that there exists some $r > 0$ such that $v \in \At^{\dagger,r}$ and such that $\frac{v}{[s(v)]}$ is a unit in $\At^{\dagger,r}$. For $s \geq r$, we let $\A_K^{\dagger,s}$ denote the set of $P(v) \in \At$ such that $P \in \cal{A}_K^{\dagger,s/\alpha}$. We also let $\A_K^\dagger = \cup_{s \geq r}(\A_K^{\dagger,s}[1/v])$. 

\begin{prop}
\label{prop locana belongs to surconvring}
For $s \geq r$, we have $(\At^{\dagger,s}_K)^{\Gamma_K,\lambda-\an} = \A_K^{\dagger,s}$. 
\end{prop}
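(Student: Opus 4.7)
The plan is to prove the two inclusions separately, handling the reverse inclusion first so that the forward inclusion can use its consequences.

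For the reverse inclusion $(\At_K^{\dagger,s})^{\Gamma_K,\lambda-\an} \subseteq \A_K^{\dagger,s}$, given $x$ in the left-hand side, lemma \ref{lemma locana inside padic compl of OK((v))} provides a unique expansion $x = \sum_{k \in \Z} a_k v^k$ with $a_k \in \O_K$ bounded and $a_k \to 0$ $\pi$-adically as $k \to -\infty$. I would then upgrade this to the overconvergence conditions $v_p(a_k) + ks/e \geq 0$ for all $k$ and $v_p(a_k) + ks/e \to +\infty$ as $k \to -\infty$ by using the factorization $v = [s(v)] \cdot u$ with $u = v/[s(v)]$ a unit in $\At^{\dagger,r}$. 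This factorization implies that the leading Teichmüller term of $a_k v^k$ is $a_k \cdot [s(v)^k]$, whose contribution to $V(\cdot,s)$ is controlled by $v_\E(s(v)^k) = k\alpha$. The hypothesis $V(x,s) \geq 0$ (equivalent to $x \in \At^{\dagger,s}$), once matched against the leading-term estimate after standard rescaling, forces precisely the required bounds on $v_p(a_k)$.

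With the reverse inclusion established, for any $g \in \Gamma_K$ one has $g(v) \in (\At_K^{\dagger,s})^{\Gamma_K,\lambda-\an}$, since $\At_K^{\dagger,s}$ is $\Gamma_K$-stable and $v$ is $\lambda$-analytic, and hence $g(v) = F_g(v)$ for some $F_g \in \cal{A}_K^{\dagger,s}$. I would then prove the forward inclusion $\A_K^{\dagger,s} \subseteq (\At_K^{\dagger,s})^{\Gamma_K,\lambda-\an}$ by the Banach-space argument of proposition \ref{prop liftsurconv implies locana}: for $g \in \Gamma_n$ with $n$ large, the operator $\Delta_g : h \mapsto h \circ F_g - h$ on $\cal{A}_K^{\dagger,s}$ satisfies the estimate $\|\Delta_g\|_s \leq |p|^c$ with $c < 1/(p-1)$, exactly as in the proof of that proposition. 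By \cite[Lemm. 2.14]{BSX18}, the $\Gamma_K$-action on the $V(\cdot,s)$-Banach completion of $\A_K^{\dagger,s}[1/p]$ is locally analytic in the classical Banach sense. Finally, corollary \ref{coro Amice precise} and lemma \ref{lemm locana is the same} convert this into integral $\lambda$-analyticity for the specific rate $\lambda$, provided $n$ is taken large enough that the rate of Banach-analyticity exceeds $\lambda$.

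The main obstacle will be the first step: one must verify carefully that cross-terms and possible cancellations among the contributions $a_k v^k$ do not spoil the leading-term estimate in the Teichmüller expansion, and that the normalization factor relating $V(\cdot,s)$ to the $ks/e$ appearing in the definition of $\cal{A}_K^{\dagger,s}$ works out correctly without any loss of radius. This calculation is essentially the reverse of the one carried out in the proof of lemma \ref{lemm liftsurconv implies unif surconv} and requires similar care.
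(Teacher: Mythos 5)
For the containment $(\At_K^{\dagger,s})^{\Gamma_K,\lambda-\an} \subseteq \A_K^{\dagger,s}$ your route is the same as the paper's: lemma \ref{lemma locana inside padic compl of OK((v))} puts $x$ in $\A_K$, and one then needs the identity $\A_K \cap \At_K^{\dagger,s} = \A_K^{\dagger,s}$. The paper gets this identity by observing that $v/[s(v)]$ is a unit of $\At^{\dagger,r}$ and invoking the proof of \cite[Prop.~7.5(i)]{colmez2008espaces}, which is precisely the Teichm\"uller leading-term computation you propose to redo by hand; the cancellation issue you flag is handled there by using that unit to reduce to series in $[s(v)]$, for which the Teichm\"uller expansion is unambiguous. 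So this half matches the paper.

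The gap is in your argument for the reverse containment $\A_K^{\dagger,s} \subseteq (\At_K^{\dagger,s})^{\Gamma_K,\lambda-\an}$ (which, incidentally, the paper's own proof does not write out). The chain ``single contraction estimate on $\Delta_g$ for $g \in \Gamma_n$, then \cite[Lemm.~2.14]{BSX18}, then corollary \ref{coro Amice precise}'' only yields $\lambda'$-analyticity for \emph{some} exponent $\lambda'$ determined by the index $n$ at which the action becomes sufficiently contracting, and the dependence goes the wrong way for you: classical analyticity only on the smaller subgroup $\Gamma_n$ translates under the Amice correspondence into a \emph{more negative} super-H\"older exponent as $n$ grows. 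So ``taking $n$ large enough'' moves you away from the fixed $\lambda$ of proposition \ref{prop exists lambda included in FON} rather than towards it, and you cannot conclude membership in the $\lambda$-analytic vectors for that specific $\lambda$. To repair this you should use that $v$ itself belongs to $(\At_K^{\dagger})^{\Gamma_K,\lambda-\an}$ by definition \ref{def of lift of unif of A_K}, so that $V(g(v)-v,s)$ satisfies the full super-H\"older lower bound $p^{\lambda}p^{j}+\mu$ for $g \in \Gamma_j$ and all $j$; feeding this quantitative estimate (rather than a single contraction constant) through your computation of $\Delta_g(h)$ gives $V(g(h(v))-h(v),s) \geq p^{\lambda}p^{j}+\mu+V(h,s)$ uniformly in $h \in \cal{A}_K^{\dagger,s}$, which is the $\lambda$-analyticity required, with a uniform $\mu$ since $V(h,s)\geq 0$.
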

\begin{proof} 
By lemma \ref{lemma v in r not just rho} and the choice of $r$ we made, $v$ belongs to $\At^{\dagger,r}$ and is such that $\frac{v}{[s(v)]}$ is a unit in $\At^{\dagger,r}$.

Now the proof of item (i) of \cite[Prop. 7.5]{colmez2008espaces} carries over and shows that $\A_K \cap \At^{\dagger,r}_K = \A_K^{\dagger,r}$ for $s \geq r$, using the fact that $v_{\E}(v) = \alpha$. To finish the proof, it suffices to notice that if $x \in (\At^{\dagger,r}_K)^{\Gamma_K,\lambda-\an}$, then $x \in \A_K$ by lemma \ref{lemma locana inside padic compl of OK((v))}, and $x$ belongs to $\At^{\dagger,r}_K$. 
\end{proof}

\begin{coro}
\label{coro la implies lift}
There exists $Q(T)$ in $\cal{A}_K^{\dagger,qr/\alpha}$ such that $\phi_q(v) = Q(v)$ and for each $g \in \Gamma_K$, there exists a series $F_g(T)$ in $\cal{A}_K^{\dagger,r/\alpha}$ such that $g(v) = F_g(v)$. 
\end{coro}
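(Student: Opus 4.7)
The plan is to derive both statements from Proposition \ref{prop locana belongs to surconvring}, which identifies $(\At^{\dagger,s}_K)^{\Gamma_K, \lambda-\an}$ with $\A_K^{\dagger,s}$ for every $s \geq r$. Accordingly, it is enough to show that $g(v) \in (\At^{\dagger,r}_K)^{\Gamma_K, \lambda-\an}$ and that $\phi_q(v) \in (\At^{\dagger,qr}_K)^{\Gamma_K, \lambda-\an}$; the series $F_g$ and $P$ will then come directly from the very definition of $\A_K^{\dagger,s}$ as $\{Q(v) : Q \in \cal{A}_K^{\dagger,s}\}$.

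For the Galois part, the crucial fact is that $\Gamma_K \simeq \Z_p$ is abelian. Any $g \in \Gamma_K$ therefore commutes with the $\Gamma_K$-action defining $\lambda$-analyticity, and acts by isometry on $\At^{\dagger,r}_K$; applying $g$ term-by-term to the Mahler-type expansion of $v$ produces an expansion for $g(v)$ whose coefficients are the $g$-translates of those of $v$ and thus satisfy the same valuation bounds. Hence $(\At^{\dagger,r}_K)^{\Gamma_K, \lambda-\an}$ is stable under $\Gamma_K$ and $g(v)$ lies in it.

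For the Frobenius part, the identity $\phi_q([\overline{u}]) = [\overline{u}]^q$ gives $\phi_q(\At^{\dagger,r}) \subseteq \At^{\dagger,qr}$, so $\phi_q(v) \in \At^{\dagger,qr}_K$. By Lemma \ref{lemm phi shift locana}, $\phi_q(v)$ is $(f+\lambda)$-analytic in $\At^{(0, p^{-1}\rho]}_K$ with $\rho = r_0/r$; since $q^{-1}\rho \leq p^{-1}\rho$, the comparison $v_{\rho'} \geq v_\rho$ for $\rho' \leq \rho$ noted at the start of the subsection transfers this to $(f+\lambda)$-analyticity, hence a fortiori $\lambda$-analyticity (as $f \geq 1$), in $\At^{(0, q^{-1}\rho]}_K = \At^{\dagger,qr}_K[1/[\overline{u}]]$. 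Combined with the integrality of $\phi_q(v)$, this places $\phi_q(v)$ in $(\At^{\dagger,qr}_K)^{\Gamma_K, \lambda-\an}$, and Proposition \ref{prop locana belongs to surconvring} applied at $s = qr \geq r$ closes the argument.

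The only step that requires a bit of care is tracking the analyticity radius through the Frobenius, but Lemma \ref{lemm phi shift locana} does essentially all the work; the Galois case is immediate from the abelianness of $\Gamma_K$, and in both cases the conclusion is a direct appeal to Proposition \ref{prop locana belongs to surconvring}.
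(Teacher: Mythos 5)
Your proof is correct and follows exactly the route the paper intends: the corollary is stated without proof as an immediate consequence of Proposition \ref{prop locana belongs to surconvring}, and your verification that $g(v)$ remains $\lambda$-analytic in $\At_K^{\dagger,r}$ (via commutativity of $\Gamma_K$ and the isometric action on Mahler coefficients) and that $\phi_q(v)$ lands in $(\At_K^{\dagger,qr})^{\Gamma_K,\lambda-\an}$ (via Lemma \ref{lemm phi shift locana} and the monotonicity of the valuations $v_\rho$) supplies precisely the hypotheses needed to invoke that proposition at $s=r$ and $s=qr$. The only cosmetic remark is that the bookkeeping of the radius through $\phi_q$ could be stated once as $\phi_q(\At^{(0,\rho]})\subset\At^{(0,q^{-1}\rho]}$, but your argument via $q^{-1}\rho\leq p^{-1}\rho$ is valid.
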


\begin{coro}
\label{coro Atdaggerla in phi-infAKdagger}
We have $(\At_K^\dagger)^{\Gamma_K-\la} = \phi_q^{-\infty}(\A_K^{\dagger})$. 
\end{coro}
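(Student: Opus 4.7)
The plan is to prove both inclusions, using Proposition \ref{prop locana belongs to surconvring} as the key identification at a fixed analyticity level, and the Frobenius-shift of Lemma \ref{lemm phi shift locana} to reduce every locally analytic vector to the level $\lambda$.

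For the inclusion $\phi_q^{-\infty}(\A_K^\dagger) \subset (\At_K^\dagger)^{\Gamma_K-\la}$, the starting point is that $v \in (\At_K^{\dagger,r})^{\Gamma_K,\lambda-\an}$ by construction. Proposition \ref{prop locana belongs to surconvring} therefore gives $\A_K^{\dagger,s} \subset (\At_K^{\dagger,s})^{\Gamma_K,\lambda-\an} \subset (\At_K^\dagger)^{\Gamma_K-\la}$ for all $s \geq r$, and inverting $v$ (which is a unit in some $\At^{\dagger,s'}$ after enlarging $s$, by lemma \ref{lemma v in r not just rho}) preserves local analyticity by lemma 2.5 of \cite{Ber14SenLa} together with lemma \ref{lemm locana is the same}. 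This gives $\A_K^\dagger \subset (\At_K^\dagger)^{\Gamma_K-\la}$. Since $\phi_q$ and $\phi_q^{-1}$ both preserve local analyticity (Lemma \ref{lemm phi shift locana} only shifts the analyticity constant by $\pm f$, which is harmless once one takes the inductive limit defining $(\cdot)^{\Gamma_K-\la}$), we conclude that $\phi_q^{-\infty}(\A_K^\dagger) \subset (\At_K^\dagger)^{\Gamma_K-\la}$.

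For the reverse inclusion, let $x \in (\At_K^\dagger)^{\Gamma_K-\la}$. By lemma 1.10 of \cite{BerRozdecompletion}, there exists $\mu' \in \R$ such that $x \in (\At_K^\dagger)^{\Gamma_K,\mu'-\an}$. Choose any integer $k \geq 0$ with $\mu' + kf \geq \lambda$; iterating Lemma \ref{lemm phi shift locana} then yields $\phi_q^k(x) \in (\At_K^\dagger)^{\Gamma_K,(\mu'+kf)-\an} \subset (\At_K^\dagger)^{\Gamma_K,\lambda-\an}$. Pick $s \geq r$ large enough that $\phi_q^k(x) \in \At_K^{\dagger,s}$. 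Proposition \ref{prop locana belongs to surconvring} then identifies $\phi_q^k(x)$ as an element of $\A_K^{\dagger,s} \subset \A_K^\dagger$, so $x \in \phi_q^{-k}(\A_K^\dagger) \subset \phi_q^{-\infty}(\A_K^\dagger)$.

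There is no real obstacle here: the substance is in Proposition \ref{prop locana belongs to surconvring} (matching locally analytic vectors at level $\lambda$ with the overconvergent ring) and in the Frobenius-shift lemma. The only mildly delicate point is checking that one is permitted to pass from $\Gamma_m$-analyticity for some open subgroup to $\Gamma_K$-analyticity at the cost of shifting the analyticity parameter, which is precisely lemma 1.10 of \cite{BerRozdecompletion}; once this is in hand, the two inclusions are formal consequences of the preceding results.
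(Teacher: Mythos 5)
Your overall strategy is the same as the paper's (the paper proves the reverse inclusion with values in $\A_K$ in the unlabelled lemma preceding Proposition \ref{prop locana belongs to surconvring}, using exactly your Frobenius-shift argument via lemma 1.10 of \cite{BerRozdecompletion}, and the corollary is then meant to follow by upgrading $\A_K$ to $\A_K^\dagger$ via Proposition \ref{prop locana belongs to surconvring}). The forward inclusion as you write it is fine.

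There is, however, a genuine gap in your reverse inclusion, at the step ``pick $s \geq r$ large enough that $\phi_q^k(x) \in \At_K^{\dagger,s}$.'' By definition $(\At_K^\dagger)^{\Gamma_K-\la} = \varinjlim_{\rho}(\At_K^{(0,\rho]})^{\Gamma_K-\la}$ and $\At^{(0,\rho]} = \At^{\dagger,r}[1/[\ubar]]$, so a locally analytic element of $\At_K^\dagger$ may have a denominator whose reduction modulo $\pi$ has negative valuation, and such an element lies in \emph{no} $\At_K^{\dagger,s}$: the rings $\At^{\dagger,s}$ require $v_{\E}(x_0)\geq 0$ for the zeroth Teichm\"uller coefficient, a condition that applying $\phi_q^k$ does not repair. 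The element $1/v$ is a concrete counterexample: it is locally analytic (it is the inverse of a locally analytic unit of $\At_K^\dagger$), it belongs to $\A_K^{\dagger,s}[1/v] \subset \A_K^\dagger$, yet $\phi_q^k(1/v)$ is not in any $\At_K^{\dagger,s}$, so your argument cannot even reach the elements of the right-hand side. The fix is short but necessary: after reducing to $\phi_q^k(x) \in (\At_K^{(0,\rho]})^{\Gamma_K,\lambda-\an}$, multiply by $v^M$ for $M \gg 0$ so that $v^M\phi_q^k(x)$ lands in some $\At_K^{\dagger,s}$ (this uses that $v/[s(v)]$ is a unit of $\At^{\dagger,r}$ and that $v_{\E}(s(v))>0$, so a large power of $[s(v)]$ clears the $[\ubar]$-denominator inside $\Atplus$); the product $v^M\phi_q^k(x)$ is still $\lambda$-analytic by Lemma \ref{lemma lambda an ring}, Proposition \ref{prop locana belongs to surconvring} then places it in $\A_K^{\dagger,s}$, and you divide by $v^M$ inside $\A_K^{\dagger,s}[1/v] \subset \A_K^\dagger$ — this is precisely why the paper defines $\A_K^\dagger$ as $\cup_{s}(\A_K^{\dagger,s}[1/v])$ rather than $\cup_s \A_K^{\dagger,s}$. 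Alternatively, you can route through Lemma \ref{lemma locana inside padic compl of OK((v))}, whose Laurent-series argument is insensitive to these denominators, as the paper does.
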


\begin{prop}
\label{prop v actually uniformizer upto phi}
There exist $k \geq 0$ and $w \in (\At_K^\dagger)^{\Gamma_K-\la}$ such that $\phi^{-k}(s(w))$ is a uniformizer of $\E_K$. 
\end{prop}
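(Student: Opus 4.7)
The plan is to reduce the statement to showing that the extension $\E_K/k_K(\!(y)\!)$ (where $y = s(v)$) is purely inseparable of degree $p^a$ for some $a \geq 0$. Granting this, any uniformizer $\pi_K$ of $\E_K$ satisfies $\pi_K^{p^a} \in k_K(\!(y)\!) \cap \E_K^+ = k_K[\![y]\!]$. Since every element of $k_K[\![y]\!]$ can be lifted coefficient-wise (e.g. via Teichmüller representatives) to an element of $\A_K^{\dagger,s}$, and $\A_K^{\dagger,s} \subset (\At_K^\dagger)^{\Gamma_K,\lambda-\an}$ by Proposition \ref{prop locana belongs to surconvring}, I can find $w \in (\At_K^\dagger)^{\Gamma_K-\la}$ with $s(w) = \pi_K^{p^a}$. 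Then $\phi^{-a}(s(w)) = \pi_K$ is a uniformizer of $\E_K$, proving the statement with $k = a$.

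The main obstacle is establishing the pure inseparability of $\E_K/k_K(\!(y)\!)$, which I would prove by contradiction. Let $F$ denote the separable closure of $k_K(\!(y)\!)$ in $\E_K$, and suppose $m' := [F : k_K(\!(y)\!)] > 1$. Choose a uniformizer $\tilde y$ of $F$; it satisfies a separable Eisenstein polynomial $P(T) \in k_K[\![y]\!][T]$ of degree $m'$. Lifting its coefficients coordinate-wise yields $\tilde P(T) \in \A_K^{\dagger,s}[T]$ whose coefficients are $\lambda$-analytic. Since $\At_K^\dagger$ is Henselian and $\tilde P$ is separable modulo $\pi$ with simple root $\tilde y$, Hensel's lemma yields a unique root $\tilde v \in \At_K^\dagger$ with $s(\tilde v) = \tilde y$.

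Local analyticity of $\tilde v$ follows from the $p$-adic implicit function theorem applied to the equation $\tilde P(T) = 0$: since $\tilde P'(\tilde v)$ is a unit in $\At_K^\dagger$, the root depends analytically on the coefficients, and these coefficients depend locally analytically on $g \in \Gamma_K$, whence $\tilde v \in (\At_K^\dagger)^{\Gamma_K,\kappa-\an}$ for some $\kappa$. By Lemma \ref{lemm phi shift locana}, for $k$ large enough that $\kappa + kf \geq \lambda$, we have $\phi_q^k(\tilde v) \in (\At_K^\dagger)^{\Gamma_K,\lambda-\an}$, with reduction $\tilde y^{q^k} \in F$. However, the $m' > 1$ distinct Galois conjugates of $\tilde y$ over $k_K(\!(y)\!)$ remain distinct after the injective Frobenius $x \mapsto x^{q^k}$ in characteristic $p$, so $\tilde y^{q^k}$ is not Galois-invariant and hence $\tilde y^{q^k} \notin k_K(\!(y)\!)$; since $\tilde y^{q^k}$ has positive valuation, this forces $\tilde y^{q^k} \notin k_K[\![y]\!]$, contradicting the inclusion $s((\At_K^\dagger)^{\Gamma_K,\lambda-\an}) \subset k_K[\![y]\!]$ from Lemma \ref{lemm s(ana) is power series in v}. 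Therefore $m' = 1$ and $\E_K/k_K(\!(y)\!)$ is purely inseparable, completing the argument.
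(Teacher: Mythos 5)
Your argument is correct, and it shares its technical heart with the paper's proof -- namely, lifting a separable Eisenstein equation through Hensel's lemma in $\At_K^\dagger$ and deducing local analyticity of the root from the implicit function theorem -- but the logical organization is genuinely different. The paper argues directly: it writes $\E_K/k_K(\!(s(v))\!)$ as purely inseparable over separable, observes that $\phi^k(u)$ (for $u$ a uniformizer of $\E_K$ and suitable $k$) is then a separable uniformizer of the intermediate field, and takes $w$ to be its Henselian, locally analytic lift; no contradiction is needed and pure inseparability is not established at this stage. You instead use the Hensel/implicit-function machinery only to rule out a nontrivial separable subextension, by playing the lifted root off against Lemma \ref{lemm s(ana) is power series in v} (the image of $\lambda$-analytic vectors lands in $k_K(\!(s(v))\!)$, whereas $\tilde y^{q^k}$ cannot, since Frobenius preserves the distinctness of its conjugates); you then conclude by the elementary Teichm\"uller lift of $\pi_K^{p^a} \in k_K[\![s(v)]\!]$. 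Your detour is longer but buys the stronger statement that $\E_K/k_K(\!(s(v))\!)$ is purely inseparable, which the paper only obtains later, in the proof of Corollary \ref{coro s(v) is unif}, by a different argument involving $\iota_\tau$. One imprecision to fix: $\tilde P'(\tilde v)$ is in general \emph{not} a unit of $\At_K^\dagger$ (the derivative of an Eisenstein polynomial at its root has positive valuation); it is only invertible in $\Bt_K^{\dagger,r}$ for suitable $r$, which is exactly what the paper uses and is still sufficient to run the implicit function theorem in the Banach space $\Bt_K^{[r,r]}$ and transfer back via Lemma \ref{lemm locana is the same}.
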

\begin{proof}
Let $u$ be a uniformizer of $\E_K$, and let us write $\overline{v}$ for $s(v)$. Note that $k_K(\!(u)\!)/k_K(\!(\overline{v})\!)$ is a finite extension of local fields of characteristic $p$. It can thus be decomposed as a purely inseparable extension of a separable extension of $k_K(\!(\overline{v})\!)$, so that there exists $k \geq 0$ and a separable monic polynomial $P$ with coefficients in $k_K(\!(\overline{v})\!)$ such that $\phi^k(u)$ is a root of $P$. Now let $y:=\phi^k(u)$ and let $\tilde{P}(T) \in \O_K(\!(v)\!)[T] \subset \Bt_K^\dagger$ be a lift of $P$ which is monic. Since $\B_K^\dagger:=\A_K^{\dagger}[1/p]$ is a Henselian field (cf \S 2 of \cite{matsuda1995local}), and since $\Bt^\dagger$ is absolutely unramified and has $\Et$ as a residue field which contains $\E_K$, there exists $\tilde{y} \in \Bt_K^\dagger$ lifting $y$ such that $\tilde{P}(\tilde{y}) = 0$ and by construction $\tilde{y} \in \At_K^\dagger$ and $\tilde{P}'(\tilde{y}) \neq 0$. 

Since $\tilde{P}'(\tilde{y}) \neq 0$ and since $\Bt_K^\dagger$ is a field, there exists $r > 0$ such that $\tilde{P}'(\tilde{y})$ is invertible in $\Bt_K^{\dagger,r}$ and such that all the coefficients of $\tilde{P}$ belong to $\B_K^{\dagger,r} \subset \Bt_K^{\dagger,r}$ (up to increasing $r$ if needed for the last inclusion to make sense). Since the coefficients of $\tilde{P}$ belong to $\B_K^{\dagger,r}$, they are locally analytic for the action of $\Gamma_K$ as elements of $\Bt^{[r,r]}$ by lemma \ref{lemm locana is the same}. Thus there exists $k \gg 0$ such that for $g \in G_k$, we have that the coefficients of $gP$ are analytic functions of $G_k$. Moreover, we have the equality $(g\tilde{P})(g(\tilde{y})=0$ and $\tilde{P}'(\tilde{y})$ is invertible in $\Bt_K^{[r,r]}$ so that $\tilde{y} \in (\Bt_K^{[r,r]})^{\Gamma_K-\la}$ by the implicit function theorem for analytic functions (which follows from the inverse function theorem given on page 73 of \cite{serre2009lie}). Using once again lemma \ref{lemm locana is the same}, this shows that $\tilde{y} \in (\At_K^\dagger)^{\Gamma_K-\la}$ and thus $w = \tilde{y}$ satisfies the claim.
\end{proof}

\begin{coro}
\label{coro s(v) is unif}
In definition \ref{def of lift of unif of A_K}, $s(v)$ is actually a uniformizer of $\E_K$.
\end{coro}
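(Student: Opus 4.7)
The plan is to combine Proposition \ref{prop v actually uniformizer upto phi} with Corollary \ref{coro Atdaggerla in phi-infAKdagger} and the minimality of $\alpha$. Fix a uniformizer $u$ of $\E_K$ and write $\alpha=d\cdot v_\E(u)$ for some positive integer $d$; the task is to prove $d=1$.

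First I would invoke Proposition \ref{prop v actually uniformizer upto phi} to get $w\in(\At_K^\dagger)^{\Gamma_K-\la}$ and $k\geq 0$ with $s(w)=u^{p^k}$. Next, using Corollary \ref{coro Atdaggerla in phi-infAKdagger}, there exists $m\geq 0$ such that $\phi_q^m(w)\in\A_K^\dagger$, which by Proposition \ref{prop locana belongs to surconvring} is genuinely $\lambda$-analytic. Since $\A_K$ is the $\pi$-adic completion of $\O_K(\!(v)\!)$, writing $\phi_q^m(w)$ as a Laurent series in $v$ yields $s(\phi_q^m(w))=u^{p^{k+fm}}\in k_K(\!(s(v))\!)$. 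Comparing $v_\E$-valuations (those of nonzero elements of $k_K(\!(s(v))\!)$ are multiples of $\alpha$) then forces $d\mid p^{k+fm}$, so $d=p^j$ for some $j\geq 0$.

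The last and main step is to rule out $j\geq 1$. The idea is that, in this case, the proof of Proposition \ref{prop v actually uniformizer upto phi} can be sharpened to produce a $w$ living already in $\A_K^\dagger$ (and hence, by Proposition \ref{prop locana belongs to surconvring}, in $(\At_K^\dagger)^{\Gamma_K,\lambda-\an}$) with $s(w)$ a uniformizer of $\E_K$. Concretely, when $\E_K/k_K(\!(s(v))\!)$ is purely inseparable, the separable polynomial appearing in the Hensel lift is of degree one, so $\tilde y\in\A_K^\dagger$; when the extension has a nontrivial separable part, one uses the Henselian property of $\A_K^\dagger$ together with $\phi_q$-compatibility to extract a root landing in $\A_K^\dagger$ modulo a compatible Frobenius shift, then adjusts by a suitable unit of $\A_K^\dagger$ coming from $v$ itself to produce an element of $(\At_K^\dagger)^{\Gamma_K,\lambda-\an}$ whose image has valuation $v_\E(u)$. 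The minimality of $\alpha$ then gives $\alpha\leq v_\E(u)$, and combined with $\alpha\geq v_\E(u)$ (discreteness of the valuation on $\E_K^+$) yields $\alpha=v_\E(u)$, so $d=1$.

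The main obstacle is carrying out this last step rigorously: one has to track both the overconvergence and the $\lambda$-analyticity of the element produced by the Hensel lift, while combining it with $v$ in $\A_K^\dagger$ to lower the valuation of its reduction. The key technical points are the ring structure from Lemma \ref{lemma lambda an ring} (allowing manipulations within the $\lambda$-analytic ring) and the fact, implicit in Corollary \ref{coro Atdaggerla in phi-infAKdagger}, that every $\lambda$-analytic vector is already a Laurent series in $v$ over $\O_K$ (up to a fixed Frobenius shift), so any such element whose reduction has small valuation must exist as a well-controlled Laurent polynomial in $v$.
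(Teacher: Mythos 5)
Your first two steps are sound and run parallel to the opening of the paper's argument: combining Proposition \ref{prop v actually uniformizer upto phi} with Corollary \ref{coro Atdaggerla in phi-infAKdagger} to write a lift of $\phi^k(u)$ as a Laurent series in $v$ up to a Frobenius twist, and deducing that $v_{\E}(s(v))=p^j v_{\E}(u)$ for some $j\geq 0$, i.e.\ that $k_K(\!(u)\!)/k_K(\!(s(v))\!)$ is purely inseparable. The gap is entirely in your ``main step''. You propose to rule out $j\geq 1$ by sharpening the Hensel-lifting construction so as to produce an element of $(\At_K^\dagger)^{\Gamma_K,\lambda-\an}$ whose reduction is a uniformizer of $\E_K$, and then to invoke the minimality of $\alpha$. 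But Lemma \ref{lemm s(ana) is power series in v} says that every element of $(\At_K^\dagger)^{\Gamma_K,\lambda-\an}$ reduces into $k_K(\!(s(v))\!)$, whose nonzero elements have valuation in $\alpha\Z=p^j v_{\E}(u)\Z$; if $j\geq 1$ there is simply no $\lambda$-analytic element whose reduction has valuation $v_{\E}(u)$, so the object you are trying to construct exists if and only if the statement you are trying to prove holds. Adjusting by units (which do not change the valuation of the reduction) or by powers of $v$ (which shift it only by multiples of $\alpha$) cannot bridge this. The Hensel lift of Proposition \ref{prop v actually uniformizer upto phi} genuinely only yields a \emph{locally} analytic element lifting $\phi^k(u)$; extracting the $p^k$-th root to get down to $u$ costs a Frobenius twist, which degrades the analyticity level from $\lambda$ to $\lambda-k$ by Lemma \ref{lemm phi shift locana}, so the resulting element is not $\lambda$-analytic and the minimality of $\alpha$ is never threatened.

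The paper's contradiction is instead with the minimality of $\lambda$, not of $\alpha$. If $s(v)$ is not a uniformizer, then $\phi^{-h}(s(v))$ is one for some $h\geq 1$ and $k_K(\!(s(v))\!)=\phi^h(\E_K)$; twisting by $\iota_\tau^{-h}$, where $\iota_\tau=\tau\otimes\phi$ for a lift $\tau\in\Gal(K/\Qp)$ of the Frobenius (after enlarging $K$ so that $K/\Qp$ is Galois), one finds that $(\lambda-h)$-analytic vectors already reduce into $\E_K$, contradicting the choice of $\lambda$ as the smallest integer with $(\Et_K)^{G_0,\lambda-\an}\subset\E_K$ in Proposition \ref{prop exists lambda included in FON}. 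This exploitation of the normalization of $\lambda$ is the idea missing from your proposal, and without it the final step does not go through.
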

\begin{proof}
Let $w$ be as in proposition \ref{prop v actually uniformizer upto phi}. Since we assumed at the beginning of the section that $K/\Qp$ is Galois, we can find $\tau \in \Gal(K/\Qp)$ whose image in $\Gal(k_K/\F_p)$ is the absolute Frobenius $\phi$. We let $\iota_\tau : \At = \O_K \otimes_{\O_{K_0}} W(\Et) \ra \At$ be the map defined by $(\tau \otimes \phi)$. Note that this map preserves locally analytic vectors, but that there is a shift in terms of ``level of analyticity'' coming from lemma \ref{lemm phi shift locana}.

We have $\iota_\tau^{-k}(w) \in (\At_K^\dagger)^{\la}$ and thus by corollary \ref{coro Atdaggerla in phi-infAKdagger} $\iota_\tau^{-k}(w) \in \phi_q^{-\ell}(\A_K^\dagger)$ for some $\ell \geq 0$. Therefore there exists $r > 0$ and $R(T) \in \cal{A}^{\dagger,r}[1/T]$ such that $\iota_\tau^{-k}(w) = \phi_q^{-\ell}(R(v))$. 

We also know by proposition \ref{prop v actually uniformizer upto phi} that $\iota_\tau^{-k}(w)$ lifts a uniformizer $u$ of $\E_K$, so that if $\overline{R} \in k_K(\!(T)\!)$ denotes the Laurent series obtained by reducing the coefficients of $R$ modulo $p$, we have $u = \phi_q^{-\ell}(\overline{R}(s(v)))$. Since $s(v) \in \E_K^+$ and since $u$ is a uniformizer of $\E_K$, there exists $f(T) \in k_K[T]$ such that $s(v) = f(u)$. This means that we have the inclusions
$$ k_K(\!(\phi_q^{\ell}(u))\!) \subset k_K(\!(s(v))\!) \subset k_K(\!(u)\!)$$
and thus since the extension $k_K(\!(u)\!)/k_K(\!(\phi^{\ell}(u))\!)$ is purely inseparable, so is $k_K(\!(u)\!)/k_K(\!(s(v))\!)$. This means that there exists $h \geq 0$ such that $\phi^{-h}(s(v))$ is a uniformizer of $k_K(\!(u)\!)$. 

But now $s\left(\iota_\tau^{-h}(\At_K^{\dagger})^{\Gamma_K,\lambda-\an}\right) \subset k_K(\!(\phi^{-h}(s(v)))\!) = \E_K$ by lemma \ref{lemm s(ana) is power series in v}, and thus $\lambda' = \lambda-h$ satisfies proposition \ref{prop exists lambda included in FON}. However, this contradicts our choice of $\lambda$, so that $h = 0$ and $s(v)$ is a uniformizer of $\E_K$.
\end{proof}

\begin{rema}
\label{rema locana implies lift}
In particular, corollaries \ref{coro la implies lift} and \ref{coro s(v) is unif} show that the existence of a nontrivial locally analytic vector implies the existence of an overconvergent lift of the field of norms as defined in \S \ref{section OC lift}. Note that this only holds \textit{a priori} for $\Zp$-extensions, because super-Hölder vectors in this case recover exactly the perfectization of the corresponding field of norms. As pointed out in remark 2.2.4 of \cite{berger2022super}, as soon as $K_\infty/K$ is a $p$-adic Lie extension whose Galois group is of dimension (as a $p$-adic Lie group) at least $2$, then the set of super-Hölder vectors of $\Et_K$ contains the field of norms $X_K(L_\infty)$ of any $p$-adic Lie extension $L_\infty/K$ contained in $K_\infty$ and is thus no longer generated by a single element over $k_K$.
\end{rema}

The following theorem summarizes most of the results of the section (note that $\alpha = 1$ by corollary \ref{coro s(v) is unif}):

\begin{theo}
\label{theo locana implies oc lift}
Let $K_\infty/K$ be a totally ramified $\Zp$-extension, and assume that $(\At_K^\dagger)^{\Gamma_K-\la} \neq \O_K$. Then there exists $\lambda \in \R_{\leq 0}$ and $r > 0$ such that for $s \geq r$, $(\At_K^{\dagger,r})^{\Gamma_K,\lambda-\an} \simeq \cal{A}_K^{\dagger,r}$. Moreover, we have $(\At_K^{\dagger})^{\Gamma_K-\la} = \phi_q^{-\infty}(\A_K^{\dagger})$. 
\end{theo}

\section{The operator $\nabla$ and the kernel of the maps $\theta \circ \phi_q^{-n}$}

In what follows, we still assume that $K_\infty/K$ is a $\Zp$-extension, so that it is abelian and by local class field theory, there exists a Lubin-Tate extension $K_{\LT}/K$ such that $K_\infty \subset K_{\LT}$. We let $\Gamma_{\LT} = \Gal(K_\LT/K)$ and $H_{\LT} = \Gal(\Qpbar/K_\LT)$ and we keep the notations from \S 1 and from the previous section. 

In particular, there exists $n \geq 0$ and $v \in \At_K^{\dagger,r_n}$ such that $v$ lifts a uniformizer of the field of norms of $K_\infty/K$ and is a locally analytic vector of $\At_K^{\dagger,r_n}$ for $\Gamma_K$. Since $v \in \At^{\dagger,r_n}$ and since $\theta : \At^{\dagger,r_0} \ra \O_{\Cp}$ is well defined, we can consider $v_m:=\theta \circ \phi_q^{-m}(v)$ for all $m \geq n$. By lemma \ref{lemma theta circ phi-n is same as mod c} and proposition \ref{prop FON embedding}, we have $v_K(\phi_q^{-n}(v)) \ra 0$ when $n \ra +\infty$, so up to increasing $n$ we can always assume that for all $m \geq n$, $v_{\E}(\theta \circ \phi_q^{-m}(v)) < c$ where $c = c(K_\infty/K)$ is as in \S \ref{section OC lift}, and we do so in what follows. In particular, $v_K(v_m) = \frac{1}{q^m}$ for $m \geq n$ by lemma \ref{lemma theta circ phi-n is same as mod c}.

\begin{prop}
\label{prop proan belongs to Robba}
If $s \geq r_n$, then a power series $R(T) = \sum_{n \in \Z}a_nT^n$, $a_n \in K$ is such that $R(v) \in \Bt_{\rig,K}^{\dagger,s}$ if and only if $R(T) \in \cal{R}_K^s$. 
\end{prop}
\begin{proof}
This follows directly from the proof of proposition \ref{prop locana belongs to surconvring}, using the same arguments as in proposition 7.5 and 7.6 of \cite{colmez2008espaces}. 
\end{proof}

In what follows, if $s \geq r_n$, we write $\B_{\rig,K}^{\dagger,s}$ for the set of $f(v)$, where $f \in \cal{R}_K^s$, and we let $\B_{\rig,K}^\dagger = \bigcup_{s \geq r_n}\B_{\rig,K}^{\dagger,s}$.

Since $v \in (\At_K^{\dagger,r_n})^{\Gamma_K,\lambda-\an}$, it is a pro-analytic element of $\Bt_{\rig,K}^{\dagger,r_n}$ for the action of $\Gamma_K$ by proposition \ref{prop la in At = pa in Btrig}. The operator $\nabla:= \log g$, for $g \in \Gamma_K$ close enough to $1$ is well defined on $(\Bt_{\rig,K}^{\dagger,r_n})^{\Gamma_K-\pa}$ so that $\nabla(v) \in (\Bt_{\rig,K}^{\dagger,r_n})^{\Gamma_K-\pa}$. If $\gamma \in \Gamma_K$ is a topological generator, then we also have
\[ \nabla(v)= \lim\limits_{n \ra +\infty}\frac{\gamma^{p^n}(v)-v}{p^n}. \]
In particular, by proposition \ref{prop proan belongs to Robba} the sequence $\frac{F_{\gamma^{p^n}}(T)-T}{p^n}$ converges in $\cal{R}_K^s$ for $s \geq r_n$ to an element $H(T)$ such that $H(v) = \nabla(v)$. 

We can rewrite $\frac{F_{\gamma^{p^n}}(T)-T}{p^n} = (F_{\gamma}(T)-T)\prod_{k=1}^n\frac{1}{p}\left(\frac{F_{\gamma^{p^k}}(T)-T}{F_{\gamma^{p^{k-1}}}(T)-T}\right)$. Since $F_{\gamma}(T)-T$ belongs to $\cal{A}_K^{\dagger,r}$ and is nonzero, it is invertible in the Robba ring $\cal{R}_K := \cup_{s > 0}\cal{R}_K^s$, and the convergence of the sequence $\frac{F_{\gamma^{p^n}}(T)-T}{p^n}$ in $\cal{R}_K$ thus implies the convergence in $\cal{R}_K$ of the infinite product 

\[\prod_{k \geq 1}\frac{1}{p}\left(\frac{F_{\gamma^{p^k}}(T)-T}{F_{\gamma^{p^{k-1}}}(T)-T}\right).\]
Let us write $H_k(T):= \frac{1}{p}\left(\frac{F_{\gamma^{p^k}}(T)-T}{F_{\gamma^{p^{k-1}}}(T)-T}\right)$, so that $p\cdot H_k \in \cal{A}_K^{\dagger,r}$ by lemma \ref{lemma f circ f divisible}. The convergence in $\cal{R}_K$ of the infinite product above is equivalent to the fact that, for $s \gg 0$, we have $H_k(T) \ra 1$ when $k \ra +\infty$ for $|\cdot|_s$.

\begin{lemm}
\label{lemma hatKinfty Gammam-an}
For $m \geq 1$, we have $(\hat{K_\infty})^{\Gamma_m-\an} = K_m$.
\end{lemm}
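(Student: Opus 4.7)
The inclusion $K_m \subseteq (\hat{K_\infty})^{\Gamma_m-\an}$ is immediate: every $x \in K_m$ is fixed by $\Gamma_m$, so its classical Taylor expansion collapses to the constant term $x_0 = x$, with $x_k = 0$ for $k \geq 1$. The content of the lemma is the reverse inclusion.

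The plan for the reverse inclusion is to first reduce to the case $x \in K_\infty$ via a Sen-type identification, and then conclude by a direct periodicity argument. For the reduction, I would invoke the identification $(\hat{K_\infty})^{\Gamma_K-\la} = K_\infty$: in the cyclotomic case this is a classical theorem of Sen and Tate, and for a general $\Zp$-extension it follows from the Berger--Colmez formalism (which applies since a totally ramified $\Zp$-extension is deeply ramified, so the Tate--Sen axioms hold for $\hat{K_\infty}/K$). Since every $\Gamma_m$-analytic vector is in particular $\Gamma_K$-locally analytic, this gives $(\hat{K_\infty})^{\Gamma_m-\an} \subseteq K_\infty$, reducing matters to showing that $\Gamma_m$-analytic vectors lying in $K_\infty$ must lie in $K_m$.

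For this, let $x \in (\hat{K_\infty})^{\Gamma_m-\an}$ and pick $n \geq m$ with $x \in K_n$. Let $\gamma$ be a topological generator of $\Gamma_m$; then $\gamma^{p^{n-m}}$ topologically generates $\Gamma_n$ and fixes $x$, whence $\gamma^{p^{n-m} s}(x) = x$ for all $s \in \Zp$. The $\Gamma_m$-analytic expansion $\gamma^t(x) = \sum_{k \geq 0} t^k x_k$, valid for all $t \in \Zp$ with $x_0 = x$ and $x_k \to 0$ in $\hat{K_\infty}$, evaluated at $t = p^{n-m} s$ then yields $\sum_{k \geq 1} p^{(n-m) k} s^k x_k = 0$ for all $s \in \Zp$. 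By the uniqueness of Taylor coefficients in $s$, we get $p^{(n-m) k} x_k = 0$, so $x_k = 0$ for all $k \geq 1$. Hence $\gamma^t(x) = x$ for every $t \in \Zp$, i.e. $x$ is $\Gamma_m$-fixed, and Tate's theorem $(\hat{K_\infty})^{\Gamma_m} = K_m$ gives $x \in K_m$.

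The main obstacle is the initial reduction: identifying $(\hat{K_\infty})^{\Gamma_K-\la}$ with $K_\infty$ is a deep Sen-type statement relying on the Tate--Sen formalism, whereas the subsequent periodicity argument is a routine manipulation of convergent power series on $\Zp$.
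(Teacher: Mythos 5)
Your proof is correct, and it takes a mildly different route from the paper's. The paper simply points to the first part of the proof of Theorem 3.2 of Berger--Colmez, which establishes the inclusion $(\hat{K_\infty})^{\Gamma_m-\an}\subseteq K_m$ directly via the Tate--Sen formalism (normalized traces $R_n$ and the invertibility of $\gamma-1$ on $(1-R_n)\hat{K_\infty}$). You instead black-box the coarse statement $(\hat{K_\infty})^{\Gamma_K-\la}=K_\infty$ and then descend from $K_\infty$ to $K_m$ by an elementary and self-contained argument: restricting the analytic expansion $\gamma^t(x)=\sum_k t^k x_k$ to the subgroup fixing $x$ and invoking uniqueness of coefficients of a convergent power series on $\Z_p$ (valid for Banach-space-valued series, e.g.\ by applying continuous linear functionals or by term-by-term differentiation in characteristic $0$). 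What your version buys is that the only deep input is the statement of the Sen-type theorem rather than its proof mechanism; what it costs is nothing, since that statement is exactly the result the paper cites elsewhere as \cite[Thm.~1.6]{Ber14SenLa}. Two cosmetic points: with the paper's conventions in \S\ref{section struc locana Zp} one has $[\Gamma_m:\Gamma_n]=q^{n-m}$ (not $p^{n-m}$), so the relevant power of $\gamma$ is $\gamma^{q^{n-m}}$ --- this changes nothing in the argument --- and the final appeal should be to Ax--Sen--Tate in the form $(\hat{K_\infty})^{\Gamma_m}=\Cp^{\G_{K_m}}\cap\hat{K_\infty}=K_m$, which is indeed standard.
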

\begin{proof}
One can follow the first part of the proof of \cite[Thm. 3.2]{Ber14SenLa}.
\end{proof}

\begin{lemm}
\label{lemma vm in OKm upto ell}
There exists $n_0 \geq n$ and $\ell \geq 0$ such that for all $m \geq n_0$, $K(v_m) = K_{mh+\ell}$. 
\end{lemm}
\begin{proof}
For $m \geq n$, we let $L_m = K(v_m)$ be the extension of $K$ generated by $v_m$. Since $v$ is locally analytic for the action of $\Gamma_K$ and since $\theta$ and $\phi_q$ are $\G_K$-equivariant, we get that the $v_m$ are algebraic over $K$ by lemma \ref{lemma hatKinfty Gammam-an} and that $L_m \subset K_\infty$. Let $L = \cup_{m \geq n}L_m \subset K_\infty$. We first prove that $L=K_\infty$, which is equivalent to the fact that an element of $\Gamma_K$ acting trivially on $L$ is trivial. Let $g \in \Gamma_K$ be such that $g_{|L_m} = \mathrm{id}_{L_m}$ for all $m \geq n$. Then by definition of $L_m$, we have $g(v_m) = v_m$ for all $m \geq n$. Thus the power series $F_g(T)-T \in \cal{A}_K^{r_n}$ admits infinitely many zeroes in the open unit disc, namely the $v_{k}$, $k \geq n$ (since $|v_{k}| \ra 1$) and is therefore zero (since it is bounded). We thus obtain that $F_g(T) = T$ hence $g(v) = v$. Therefore $g$ acts as the identity on the field of norms of $K_\infty/K$ thus $g = \id$ in $\Gamma_K$ and we are done.

Now the inclusion $K \subset L_n$ induces a continuous injective morphism $\Gal(K_\infty/L_n) \subset \Gamma_K$ whose image is compact open and thus $K_\infty/L_n$ is a sub-$\Zp$-extension (totally ramified) of $K_\infty/K$. In order to prove the proposition, it thus suffices to prove that for $m$ big enough, $L_{m+1}/L_m$ is of degree $q$. 

Recall that $\phi_q(v)$ is an overconvergent series in $v$, so that there exists $Q(T) \in \cal{A}_K^{\dagger,r_{n+1}}$ such that $\phi_q(v) = Q(v)$. By definition of the elements $v_m$, this means that we have $Q^{\phi_q^{-1}}(v_{m+1})=v_m$, where $Q^{\phi_q^{-1}}$ is the series $Q$ where we have applied $\phi_q$ to the coefficients. Since $|v_m|_p \ra 1$ in $\Cp$, by the theory of Newton polygons, we have $v_K(v_{m+1}) = \frac{1}{q}v_K(v_m)$ (we have $Q(T) \equiv T^q \mod \mathfrak{m}_K$ by definition). Since $L_{m+1}/L_m$ has at most degree $q$ by the theory of Newton polygons and since $K_\infty/L_n$ is totally wildly ramified (since $K_\infty/K$ is as such), we have $v_{L_m}(v_m) = \frac{v_{L_{m-1}}(v_{m-1})}{q}[L_m:L_{m-1}]$ so that the sequence $(v_{L_m}(v_m))_{m \geq n}$ is nonincreasing. Since it is bounded below and has integers values, it is constant for $m$ big enough and the relation $v_K(v_{m+1}) = \frac{1}{q}v_K(v_m)$ for $m \gg 0$ implies that for $m \gg 0$, the extension $L_{m+1}/L_m$ is of degree $q$. 

Therefore, there exists $\ell \geq 0$ such that for $m$ big enough, we have $L_m = K_{mh+\ell}$. This proves the result.
\end{proof}

Recall that we can assume without loss of generality that $K/\Qp$ is Galois. If $F(T) = \sum_{n \in \Z}a_nT^n$ with the coefficients $a_n$ in $K$, we let for $\tau \in \Gal(K/\Qp)$ $F^\tau(T) = \sum_{n \in \Z}\tau(a_n)T^n$.

For $\tau \in \Gal(K/\Qp)$, we let $n(\tau) \in \{0,\ldots,h-1\}$ be such that $\tau$ acts by $\phi^{n(\tau)}$ on $k_K$. The map $h_\tau:=(\tau \circ \phi^{n(\tau)})$ is well defined on $\At = \O_K \otimes_{W(k_K)}\Et$ and on $\At^\dagger$ (note that $h_\tau$ induces an isomorphism between $\At^{\dagger,r}$ and $\At^{\dagger,p^{n(\tau)}r}$), and the latter extends into a map $\Bt^I \ra \Bt^{p^{n(\tau)}I}$.

For $\tau \in \Gal(K/\Qp)$, we let $y_\tau = h_\tau(u)$, where $u$ is the Lubin-Tate period for $\Gamma_{\LT}$ that has been defined in \S 3. We also let $t_\tau = h_\tau(t_\pi) = \log_{\LT}^\tau(y_\tau)$. If we let $Q_n^\tau = Q_n^\tau(y_\tau)$, then $t_\tau = y_\tau \prod_{n \geq 1}Q_n^\tau/\tau(\pi)$. Note that since $Q_n/\pi$ is a generator of $\ker(\theta \circ \phi_q^{-n} : \Bt_{\rig}^+ \ra \Cp)$, $Q_n^\tau/\tau(\pi) = h_\tau(Q_n/\pi)$ is a generator of $\ker(\theta \circ \phi_q^{-n} \circ h_\tau^{-1} : \Bt_{\rig}^+ \ra \Cp)$. When $K=\Qp$ and $\pi=p$, $t_p$ is exactly the classical $t$ of $p$-adic Hodge theory. We still denote its image by $t$ in $\Bt_{\rig}^+$.

\begin{lemm}
\label{lemma t prod t_tau}
There exists $a \in (K\cdot \hat{\Q_p^{\mathrm{nr}}})^\times$ such that $\prod_{\tau \in \Gal(K/\Qp)} = a\cdot t$ in $\Bt_{\rig}^+$.
\end{lemm}
\begin{proof}
This is basically proposition 3.4 of \cite{BMTensTriang} and then one has to use the fact that $\Bt_{\rig}^+$ is a subring of $\B_{\mathrm{cris}}^+$ and that every element involved belongs to $\Bt_{\rig}^+$. 
\end{proof}

\begin{lemm}
For any $\tau \in \Gal(K/\Qp)$, the element $\nabla(v)$ is divisible by $t_\tau$ in $\Bt_{\rig}^{\dagger,r_n}$.
\end{lemm}
\begin{proof}
Let $m \geq n$ and let $\tau \in \Gal(K/\Qp)$. Since the maps $\theta$ and $\phi_q$ are $\G_K$-equivariant, we have that 
\[ \theta \circ \phi_q^{-n} \circ h_\tau^{-1}(\nabla(v)) = \nabla(\theta \circ \phi_q^{-n} \circ h_\tau^{-1}(v)). \]
Note that $\theta \circ \phi_q^{-n} \circ h_\tau^{-1}(v)$ is a locally analytic vector for $\Gamma_K$ in $\hat{K_\infty}$, so that it belongs to $K_\infty$ by lemma \ref{lemma hatKinfty Gammam-an}, and thus is killed by $\nabla$. In particular, this implies that for any $m \geq n$ and for any $\tau \in \Gal(K/\Qp)$, $\nabla(v)$ belongs to $\ker(\theta \circ \phi_q^{-n} \circ h_\tau^{-1}) = (Q_n^\tau)$.

This means that for all $\tau \in \Gal(K/\Qp)$, $t_\tau | \nabla(v)$ in $\Bt_{\rig}^{\dagger,r_n}$ (the argument for the division by an infinite product works as in lemma 4.6 of \cite{Ber02}).
\end{proof}

\begin{coro}
\label{coro t divides nabla}
The element $\nabla(v)$ is divisible by $t$ in $\Bt_{\rig}^{\dagger,r_n}$.
\end{coro}
\begin{proof}
We just combine the two previous lemma. 
\end{proof}

We let $z = \frac{\nabla(v)}{t} \in \Bt_{\rig}^{\dagger,r_n}$ and $\lambda := \frac{\phi_q(z)}{z} = \frac{Q'(v)}{q}$.

\begin{lemm}
\label{lemma lambda in AtKdagger}
We have $\lambda \in (\A_K^{\dagger})^\times$.
\end{lemm}
\begin{proof}
If $z \in \Bt^\dagger$, up to replacing $z$ by $\pi^nz$ for some nonnegative $n$ (which does'nt change the value of $\frac{\phi_q(z)}{z}$), we can assume that $z \in \At^\times$, and then $\lambda = \frac{\phi_q(z)}{z} \in \At \cap \B_K^{\dagger} = \A_K^{\dagger}$.

If $z \in \Bt_{\rig}^{\dagger}$ but not in $\Bt^\dagger$, then $r \mapsto V(z,r)$ is eventually decreasing as $r \rightarrow +\infty$ by corollary \ref{coro function bounded or eventually increasing}. Moreover, we have $V(\phi_q(z),qr) = V(z,r)$ so that $V(\lambda,qr) = V(z,r)-V(z,qr) \geq 0$ for $r \gg 0$. By lemma \ref{lemm when is function bounded}, this implies that $\lambda \in \Bt^{\dagger}$.

Now let us write $\lambda = \sum_{n \in \Z}\lambda_nv^n$, $\lambda_n \in K$. Since $V(\lambda,r) \geq 0$ for $r \gg 0$, we have for all $n$ that $|\lambda_n|\rho^n \leq 1$ for $\rho$ close enough to $1$, and thus $\lambda_n \in \O_K$ for all $n$ so that $\lambda \in \A_K^\dagger$.

In both cases, we have $\lambda \in \A_K^\dagger$, and since $Q(v) = v^q \mod \pi$, we have $s(\frac{Q'(v)}{q}) > 0$ (because the coefficients in $s(v)^{q-1}$ is $1$) and so by lemma \ref{lemma v in r not just rho} we have $\lambda \in (\A_K^{\dagger})^\times$.
\end{proof}

\begin{prop}
\label{prop nabla/t inv}
The element $\frac{\nabla(v)}{t}$ is invertible in $\Bt_{\rig}^\dagger$.
\end{prop}
\begin{proof}
By lemma \ref{lemma lambda in AtKdagger} we can apply lemma \ref{lemma phi(x)/x means étale} to $z$, which shows that $\frac{\nabla(v)}{t}$ is an invertible element of $\Bt^\dagger$.
\end{proof}

For $\tau \in \Gal(K/\Qp)$ and $m \geq n$, we let $v_m^\tau:= \theta \circ \phi_q^{-m} \circ h_\tau^{-1}(v)$. Since $v$ is locally analytic for the action of $\Gamma_K$, so are the $v_m^\tau$ and so they belong to $K_\infty$ by lemma \ref{lemma hatKinfty Gammam-an}.

\begin{prop}
\label{prop exists gen kertheta}
For each $\tau \in \Gal(K/\Qp)$ and $m \geq n$, the minimal polynomial $\mu_{K,v_m^\tau}$ of $v_m^{\tau}$ over $K$ is such that $\mu_{K,v_m^\tau}(v)$ is a generator of $\ker(\theta \circ \phi_q^{-m} \circ h_\tau^{-1} : \Bt_{\rig}^{\dagger,r_n} \ra \C_p)$. 
\end{prop}
\begin{proof}
The map $\theta \circ \phi_q^{-m} \circ h_\tau^{-1}: \B_K^{\dagger,r_n} \ra K(v_m^\tau)$ is surjective by definition, and its kernel is a principal ideal generated by an element of $K[T]$ by proposition \ref{prop Lazard PID}. Since $K(v_m^\tau)$ is a field, this ideal is maximal, and so its monic generator is an irreducible element of $K[T]$, which by definition has $v_m^\tau$ as a root, and thus is equal to $\mu_{K,v_m^\tau}$.

Since for all $\tau \in \Gal(K/\Qp)$ and $m \geq n$, $\theta \circ \phi_q^{-m} \circ h_\tau^{-1}(H(v)) = H(v_m^\tau)=0$, we get that $\mu_{K,v_m^\tau}(v) | H(v)$ in $\Bt_{\rig}^{\dagger}$. Moreover, by definition, the ideal generated by $\mu_{K,v_m^\tau}(v)$ in $\Bt_{\rig}^{\dagger}$ is contained in $(Q_n^\tau)$. Since $H(v) | \prod_{\tau \in \Gal(K/\Qp),m \geq n} \frac{Q_n^\tau}{\tau(\pi)}$, this means that the ideals $(Q_n^\tau)$ and $(\mu_{K,v_m^\tau}(v))$ coincide in $\Bt_{\rig}^{\dagger}$. 
\end{proof}

\begin{prop}
\label{prop implies exists lambda1 and lambda2}
For each $\tau \in \Gal(K/\Qp)$ there exists $S_{\tau}(T) \in \cal{R}_K$ such that $\frac{S_{\tau}(v)}{t_\tau} \in \Bt_{\LT}^{\dagger}$, and $\frac{1}{\nabla(v)}\prod_{\tau \in \Gal(K/\Qp)}S_\tau(v) \in (\B_{\rig,K}^\dagger)^\times$.
\end{prop}
\begin{proof}
Proposition \ref{prop exists gen kertheta}, along with the fact that $H(v)$ is equal, up to a unit of $\Bt_{\rig}^{\dagger}$, to $\prod_{\tau \in \Gal(K/\Qp),m \geq n} \frac{Q_n^\tau}{\tau(\pi)}$, itself equal up to a unit to $\prod_{\tau \in \Gal(K/\Qp)}t_\tau$ implies that $H(T)$ admits a decomposition $H(T) = \prod_{\tau \in \Gal(K/\Qp)}S_\tau(T)$, where $S_{\tau}(T) \in \cal{R}_K$ and $S_\tau(v)/t_\tau$ is a unit of $\Bt_{\rig}^{\dagger}$.
\end{proof}

\section{The anticyclotomic setting and a conjecture of Berger}

Berger's conjecture (\cite[Conj. A]{berger2022substitution}) states that, given a Robba ring $\cal{R}_K$ over a finite extension $K$ of $\Qp$, endowed with an overconvergent Frobenius map $\phi_q$ \footnote{Berger's conjecture is stated more generally for overconvergent substitution maps, not just Frobenius maps.}, we have $(\mathrm{Frac}\cal{R}_K)^{\phi_q=1} = K$, or equivalently, that for each $\lambda \in \cal{R}_K$, the ``eigenspaces'' $\cal{R}_K^{\phi_q=\lambda}$ have at most dimension $1$ as $K$-vector spaces. We first explain why this is relevant in the anticyclotomic setting, and then shall prove that the eigenspace $\cal{R}_K^{\phi_q=Q'}$ has $K$-dimension $1$ in our setting.

Let us assume that $K=\Q_{p^2}$ and that $K_\infty/K$ is the anticyclotomic extension. We let $K_{\LT}$ be the Lubin-Tate extension of $K$ attached to the uniformizer $p$. Recall that $\sigma \in \Gal(K/\Qp)$ is the absolute Frobenius. We suppose that there exist nontrivial locally analytic vectors in $\At_K^\dagger$ in the anticylotomic case, and keep the notations from the previous sections. Note that, in the anticyclotomic setting, $\frac{t_\id}{t_\sigma} \in (\mathrm{Frac}\Bt_{\rig,K}^\dagger)^{\phi_q=1}$.

By proposition \ref{prop implies exists lambda1 and lambda2}, there exist $S_\id \in \cal{R}_K$ such that $\alpha:=\frac{S_\id(v)}{t_\id} \in \Bt_{\LT}^{\dagger}$. We let $\lambda_1:=S_\id$ . Recall that we wrote $H(T) \in \cal{R}_K$ for the power series such that $\nabla(v) = H(v)$.

\begin{lemm}
We have $\frac{\phi(\alpha)}{\alpha} \in \bigcup_{m \geq 0}\phi_q^{-m}(\B_{K}^{\dagger})$.
\end{lemm}
\begin{proof}
We have $\alpha = \frac{S_\id(v)}{t_\id} \in \Bt_{\LT}^{\dagger}$. The same proof as in lemma 5.1.1 of \cite{GP18} shows that $\alpha$ is a pro-analytic vector of $\Bt_{\rig,\LT}^{\dagger}$ for the action of $\Gal(K_\LT/K)$. Since $\phi$ commutes with the Galois action, so is $\phi(\alpha)=\frac{\phi(S_\id(v))}{t_\sigma}$.  

Writing 
\[\frac{\phi(\alpha)}{\alpha} = \frac{\phi(\lambda_1(v))}{\lambda_1(v)}\cdot \frac{t_\id}{t_\sigma} \]
shows that $\frac{\phi(\alpha)}{\alpha}$ is invariant by $\Gal(K_\LT/K_\infty)$ (since it is the case of $\lambda_1(v), \phi(\lambda_1(v))$ and $\frac{t_\id}{t_\sigma}$).  Therefore, we can conclude by proposition \ref{prop la in At = pa in Btrig} and corollary \ref{coro Atdaggerla in phi-infAKdagger}.
\end{proof}

We can assume, up to replacing $\lambda_1$ by $\phi_q^m(\lambda_1)$ for $m \gg 0$, that $\frac{\phi(\alpha)}{\alpha} \in \B_{K}^{\dagger}$, and we let $\lambda_2 \in \cal{R}_K$ be the element such that $\lambda_2(v):= \alpha t_\sigma$ (which exists since $\alpha t_\sigma = \frac{\alpha}{\phi(\alpha)}\cdot \phi(\lambda_1(v))$). Since $\frac{t_\id}{t_\sigma} \in (\mathrm{Frac}\Bt_{\rig,K}^\dagger)^{\phi_q=1}$, this means that $\frac{\lambda_1}{\lambda_2} \in (\mathrm{Frac}\cal{R}_K)^{\phi_q=1}$, and so is a potential counterexample to Berger's conjecture. Moreover, we have by proposition \ref{prop nabla/t inv} that $\frac{\lambda_1\lambda_2}{H}$ is invertible in $\cal{R}_K$. 

We let $\beta = \frac{\phi_q(\lambda_1)}{\lambda_1} = \frac{\phi_q(\lambda_2)}{\lambda_2} \in \cal{R}_K$, and we let $W := W(\lambda_1,\lambda_2)=\lambda_1'\lambda_2-\lambda_1\lambda_2'$ be the Wronskian of $\lambda_1$ and $\lambda_2$. 

\begin{lemm}
We have $W \in K^\times\cdot(\cal{A}_K^{\dagger})^\times$. Moreover, $\frac{\lambda_1\lambda_2}{W}, \frac{\lambda_1^2}{W}, \frac{\lambda_2^2}{W} \in \cal{R}_K^{\phi_q = Q'}$. In particular, there exists $\ell \in K \neq 0$ such that $\frac{\lambda_1\lambda_2}{W} = \ell\cdot H$.
\end{lemm}
\begin{proof}
Derivating the equality $\lambda_i \circ Q = \beta \lambda_i$ for $i=1,2$ yields $W \in \cal{R}_K^{\phi_q = \beta^2/Q'}$. Note that $\beta = \frac{\phi_q(\lambda_1)}{\lambda_1}$ and that 
\[\frac{\phi_q(\lambda_1(v))}{\lambda_1(v)} = \frac{\phi_q(\alpha)}{\alpha}\frac{\phi_q(t_\id)}{t_\id} = p \frac{\phi_q(\alpha)}{\alpha}.\]
In particular, by lemma \ref{lemma phi(x)/x means étale}, this means that $v_p(\beta) = 1$, where $v_p(f) = \inf(v_p(a_i))$ for $f = \sum_{i \in \Z}a_iT^i$. Therefore, $v_p(\beta^2/Q') = 0$ by lemma \ref{lemma lambda in AtKdagger} and so belongs to $(\cal{A}_K^{\dagger})^\times$, which implies by lemma \ref{lemma phi(x)/x means étale} that $W$ is, up to multiplication by a nonzero element of $K$, an element of $(\cal{A}_K^{\dagger})^\times$, so that the three quotients in the statement of the lemma make sense, and belong to $\cal{R}_K^{\phi_q = Q'}$.  

Since $H$ is divisible by $\lambda_1\lambda_2$ and satisfies $\phi_q(H) = Q' \cdot H$, we get that 
\[\frac{\lambda_1\lambda_2}{WH} \in \cal{R}_K^{\phi_q=1} = K.\]
\end{proof}

We let $f_i = \frac{\lambda_i^2}{W}$ for $i=1,2$. 

\begin{lemm}
\label{lemma phi invar means galois invar}
There exists $m \geq 0$ such that $f_1 \circ F_g = F_g' \cdot f_1$ for all $g \in \Gamma_m$.
\end{lemm}

\begin{proof}

Let $g \in \Gamma_K$ and let $i$ be either $1$ or $2$. Let $h(T):= \frac{f_i \circ F_g}{F_g'}$. We first prove that $h \circ Q = Q' \cdot h$.

We have $h \circ Q = \frac{(f_i \circ Q) \circ F_g}{F_g' \circ Q}$ since $F_g$ and $Q$ commute, and so 

\[ h \circ Q = \frac{(Q' \cdot f_i) \circ F_g}{F_g' \circ Q} = \frac{Q' \circ F_g}{F_g' \circ Q}\cdot f_i \circ F_g \]

and derivating the equality $F_g \circ Q = Q \circ F_g$ yields $\frac{Q' \circ F_g}{F_g' \circ Q} = \frac{Q'}{F_g'}$ so that 

\[ h \circ Q = Q' \cdot h. \]

Since $\frac{f_i \circ F_g}{F_g'}$ is divisible by $f_i$ in $\cal{R}_K$, this means that $\frac{f_i \circ F_g}{f_i \cdot F_g'} \in \cal{R}_K^{\phi_q=1}$ and so there exists some $\ell_i(g)$ in $K$ (which has to be nonzero) such that $f_i \circ F_g = \ell_i(g) F_g' \cdot f_i$. 

Derivating the equality $f_i \circ F_g = \ell_i(g) F_g' \cdot f_i$ yields
\[ F_g'(T)\cdot f_i' \circ F_g(T) = \ell_i(g)\cdot(F_g'(T)f_i'(T)+F_g''(T)f_i(T)). \]

Note that, for example from the fact that $\frac{t_\id}{\lambda_1(v)}$ is an invertible element of $\Bt_{\LT}^\dagger$, we know that $f_1(\theta \circ \phi_q^{-m}(v))= 0$ for $m \geq n$, and so if $g_m$ is a topological generator of $\Gal(K_\infty/K(v_m))$, we obtain $\ell_1(g_m) = 1$. Therefore, we obtain that for all $g \in \Gal(K_\infty/K(v_m))$, $f_1 \circ F_g = F_g' \cdot f_1$, which proves the claim.
\end{proof}

\begin{coro}
We have $\frac{\lambda_1}{\lambda_2} \in K$.
\end{coro}
\begin{proof}
By lemma \ref{lemma phi invar means galois invar}, we have that 
\[\frac{\lambda_1(v)}{\lambda_2(v)} = \frac{\lambda_1^2(v)}{W(v)}\cdot \frac{W(v)}{\lambda_1(v)\lambda_2(v)} = \frac{\lambda_1^2(v)}{W(v)}\frac{1}{H(v)} \]
is such that $g(\frac{\lambda_1(v)}{\lambda_2(v)}) = \frac{\lambda_1(v)}{\lambda_2(v)}$ for all $g \in \Gamma_m$ for some $m \gg 0$. Therefore, $\frac{\lambda_1(v)}{\lambda_2(v)} \in (\mathrm{Frac}\Bt_{\rig,K}^\dagger)^{\G_{K_m}} = K$ by corollary 28.7 of \cite{Ber10IHP}, which proves the claim.
\end{proof}

In particular, this proves that the potential counterexample to Berger's conjecture does not exist, and so proves the following:

\begin{theo}
\label{theo anticyclo nolift}
There is no overconvergent lift of the field of norms of $K_\infty/K$ when $K=\Q_{p^2}$ and $K_\infty/K$ is the anticyclotomic extension.
\end{theo}

\section{Kedlaya's conjecture and higher locally analytic vectors}
We now explain how to use the results from the previous sections to refute Kedlaya's conjecture.

\begin{prop}
\label{prop if no locana then no kedlaya}
Let $K_\infty/K$ be a $\Zp$ extension with Galois group $\Gamma_K$, and assume that $(\At_K^{\dagger})^{\Gamma_K-\la} = \O_K$. Then Kedlaya's conjecture is false for $K_\infty/K$.
\end{prop}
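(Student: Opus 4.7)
The plan is to argue by contraposition. Assuming Kedlaya's conjecture holds for $K_\infty/K$, I would derive a nontrivial locally analytic vector in $\At_K^\dagger$, contradicting the hypothesis. Since Kedlaya's conjecture for the trivial representation $T = \Zp$ reduces to the tautological isomorphism $\At_K^{\dagger,r} \otimes_{\O_K} \O_K \simeq \At_K^{\dagger,r}$ and yields no information, I would apply it instead to a representation of the form $T = \Zp(\eta)$ for a continuous character $\eta : \G_K \to \Zp^\times$ whose restriction to $H_K$ is nontrivial. Such characters exist because $K_\infty/K$ is totally ramified, so $K_\infty \cap K^{\unr} = K$: any nontrivial unramified character of $\G_K$ provides such an $\eta$, and ramified characters not factoring through $\Gamma_K$ are also available.

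For such $T$, the module $\D_K^{\dagger,r}(T) = (\At^{\dagger,r} \otimes_{\Zp} \Zp \cdot e)^{H_K}$ is a free rank-one $\At_K^{\dagger,r}$-module, generated by some $\alpha \cdot e$ with $\alpha \in \At^{\dagger,r}$ satisfying $h(\alpha) = \eta(h)^{-1}\alpha$ for $h \in H_K$, and the $\Gamma_K$-action on this generator is encoded by a continuous cocycle $\beta : \Gamma_K \to (\At_K^{\dagger,r})^\times$ defined by $g(\alpha e) = \beta(g) \alpha e$. Under the hypothesis $(\At_K^{\dagger,r})^{\Gamma_K-\la} = \O_K$, Kedlaya's conjecture for $T$ is equivalent to the cocycle $\beta$ being cohomologous in $(\At_K^{\dagger,r})^\times$ to a character $\rho : \Gamma_K \to \O_K^\times$. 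Indeed, a locally analytic $\At_K^{\dagger,r}$-generator $\alpha_0 \alpha e$ with $\alpha_0 \in (\At_K^{\dagger,r})^\times$ would span the rank-one $\O_K$-submodule $\D_K^{\dagger,r}(T)^{\Gamma_K-\la}$, and the $\Gamma_K$-stability of this submodule combined with the trivial $\Gamma_K$-action on $\O_K$ would force $g \mapsto g(\alpha_0)\beta(g)/\alpha_0$ to be a character of $\Gamma_K$ valued in $\O_K^\times$.

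The main obstacle is to identify, for a suitably chosen $\eta$, a concrete obstruction preventing $\beta$ from admitting such a coboundary. For $\eta$ unramified, a direct computation (using that the period $\alpha$ is a Teichmüller-type lift) shows that $\beta$ is identically trivial, so unramified characters do not produce a counterexample. The counterexample must therefore come from a ramified character $\eta$ not factoring through $\Gamma_K$, for which the period $\alpha$ has more subtle behavior and $\beta$ becomes nontrivial modulo coboundaries. Exhibiting the obstruction would proceed by reducing modulo $\pi$ and invoking corollary \ref{lemma image mod p of AtKdaggerla}: under the hypothesis, the image modulo $\pi$ of any candidate $\alpha_0 \in (\At_K^\dagger)^{\Gamma_K-\la}$ must lie in $k_K$, while a careful analysis of $\bar\alpha \in \Etplus$ and $\bar\beta$ for the chosen $\eta$ should show that this is incompatible with the equation $g(\alpha_0)\beta(g) = \rho(g)\alpha_0$ modulo $\pi$, yielding the desired contradiction.
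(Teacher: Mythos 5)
Your setup is sound and matches the first half of the paper's argument: under the hypothesis $(\At_K^{\dagger})^{\Gamma_K-\la}=\O_K$, Kedlaya's conjecture applied to a rank-one twist forces the locally analytic vectors of $\D_K^{\dagger,r}(T)$ to form an $\O_K$-line $\O_K\cdot(\alpha_0\alpha e)$ on which $\Gamma_K$ acts through an $\O_K^\times$-valued character, i.e.\ the descent cocycle $\beta$ must become cohomologous to such a character. But your proof stops exactly where the work begins: you concede that ``the main obstacle is to identify\dots a concrete obstruction,'' and the mod-$\pi$ route you sketch is both uncarried-out and based on a conflation. Corollary \ref{lemma image mod p of AtKdaggerla} (and the hypothesis itself) constrains locally analytic elements of the \emph{ring} $\At_K^{\dagger}$, whereas $\alpha_0\alpha e$ is only a locally analytic element of the \emph{module} $\D_K^{\dagger,r}(T)$; there is no reason for $\alpha_0$ to lie in $(\At_K^{\dagger})^{\Gamma_K-\la}=\O_K$, so reducing the relation $g(\alpha_0)\beta(g)=\rho(g)\alpha_0$ modulo $\pi$ produces no visible contradiction. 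As written, the argument does not close for any specific $\eta$.

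The paper supplies the missing obstruction from a direction your proposal never touches: the Frobenius. The $\O_K$-line of locally analytic vectors is also $\phi_q$-stable, so $\phi_q(e\otimes y)=a\,(e\otimes y)$ with $a\in\O_K^\times$; choosing $z\in\O_{\hat{K^{\unr}}}$ with $z/\phi_q(z)=a$ makes $yz$ a $\phi_q$-invariant element of $\At^{\dagger}$, hence an element of $\O_K$, so the period $y$ is forced to be \emph{unramified}. Consequently every rank-one character of $\G_K$ would have to factor through $\Gal(K_\infty\cdot K^{\unr}/K)$, whence $K^{\mathrm{ab}}=K_\infty\cdot K^{\unr}$ by local class field theory; this occurs only when $K_\infty/K$ is Lubin--Tate, i.e.\ (for a $\Zp$-extension) an unramified twist of the cyclotomic extension of $\Qp$, which is excluded by the hypothesis. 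Note the structural difference: rather than hunting for one cleverly chosen $\eta$ with a computable cohomological obstruction, the paper quantifies over \emph{all} rank-one $\O_K$-representations and lets class field theory detect that some character fails to be (unramified)$\times$($\Gamma_K$-valued). To repair your proof you would need either to import this $\phi_q$-untwisting step, or to produce an explicit period computation for a concrete ramified $\eta$ not factoring through $\Gamma_K$ --- and the latter is precisely the hard analysis your proposal defers.
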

\begin{proof}
Let us assume that $(\At_K^{\dagger})^{\Gamma_K-\la} = \O_K$ and that Kedlaya's conjecture is true. This means that if $T$ is a free $\O_K$-representation of $\G_K$ then $\D_K^{\dagger,\an}(T):=(\At^\dagger \otimes_{\Zp}T)^{H_K,\Gamma_K-\la}$ is an $\O_K$-module such that $\At^\dagger \otimes_{\O_K} \D_K^{\dagger,\an}(T) \simeq \At^\dagger \otimes_{\Zp}T$ and thus $(\At^\dagger \otimes_{\O_K} \D_K^{\dagger,\an}(T))^{\phi_q=1} \simeq T$. 

Moreover, since $(\At_K^{\dagger})^{\Gamma_K-\la} = \O_K$, we can assume that $K_\infty/K$ is not a twisted cyclotomic extension of $K$. Now let $T$ be a rank $1$ $\O_K$-representation of $\G_K$, with basis $e$. By Kedlaya's conjecture, there exists $y \in \At^{\dagger}$ such that $(T \otimes_{\O_K}\At^{\dagger})^{H_K,\Gamma_K-\la}$ is a rank $1$ $\O_K$-module generated by $e \otimes y$, and comes equipped with an $\O_K$-linear action of $\Gamma_K$ and $\phi_q$. In particular, there exists $a \in \O_K^\times$ (since $\phi_q$ is an isomorphism) such that $\phi_q(e \otimes y) = a \cdot (e \otimes y)$, and $\Gamma_K$ acts on $e \otimes y$ by multiplication by some character $\eta : \Gamma_K \ra \O_K^\times$. 

By local class field theory, there exists $z$ in $\O_{\hat{K^{\mathrm{unr}}}}$, the ring of integers of the $p$-adic completion of the maximal unramified extension of $K$, such that $\frac{z}{\phi_q(z)}=a$. Since $\O_{\hat{K^{\mathrm{unr}}}} \subset \Atplus \subset \At^\dagger$, we have that $z \in \At^{\dagger}$ and if $x=e \otimes y \otimes z \in  \D_K^{\dagger,\an}(T)\otimes_{\O_K}\At^\dagger   \simeq T \otimes_{\O_K}\At^\dagger$, we get that  $\phi_q(x) = x$ so that $yz \in \At^\dagger$ is invariant by $\phi_q$ and thus belongs to $\O_K$. 

This means that $y \in \O_{\hat{K^{\mathrm{unr}}}}$, and since $\Gamma_K$ acts on $e \otimes y$ by multiplication by some character $\eta : \Gamma_K \ra \O_K^\times$, this means that $\G_K$ acts on $e$ by multiplication by a character which factors through $\Gal(K_\infty\cdot K^{\mathrm{unr}}/K)$. Since this is true for any rank $1$ representation $T$ of $\G_K$, this means by local class field theory that $K^{\mathrm{ab}} = K_\infty\cdot K^{\mathrm{unr}}$, which is possible if and only if $K_\infty$ is a Lubin-Tate extension of $K$. Since $K_\infty/K$ is a $\Zp$-extension, this means that $K=\Qp$ and that $K_\infty/K$ is an unramified twist of the cyclotomic extension of $K$, which as stated above is ruled out by the assumption that $(\At_K^{\dagger})^{\Gamma_K-\la} = \O_K$. 
\end{proof}

As a corollary of theorem \ref{theo anticyclo nolift} and proposition \ref{prop if no locana then no kedlaya}, we obtain the following theorem:

\begin{theo}
\label{theo anticyclo counters kedlayasconj}
The anticyclotomic extension provides a counterexample to Kedlaya's conjecture. 
\end{theo}

When $(\At_K^{\dagger})^{\Gamma_K-\la}=\O_K$, it is straightforward to see that this implies the existence of nontrivial locally analytic vectors attached to $\At_K^{\dagger}$-modules of rank $1$, by taking some well chosen exact sequence. One could think of the exact sequence 

\[0 \rightarrow \ker(\theta \circ \phi_q^{-k}) \rightarrow \At_K^{\dagger,r_k} \rightarrow \O_{\hat{K_\infty}} \rightarrow 0 \] 

but the problem is that $\O_{\hat{K_\infty}}$ is not a Tate ring so that this exact sequence is not in the right category (thanks to Gal Porat for pointing this out). If one tries to solve the problem by inverting $p$, then $\hat{K_\infty}$ is a $p$-adic Banach space but $\B_K^{\dagger,r_k}$ is not, so that the exact sequence obtained after inverting $p$ in the sequence above is not an exact sequence in the category of $p$-adic Banach spaces. 

However, one can take locally analytic vectors in the sequence 

\[ 0 \rightarrow \At_K^{\dagger} \xrightarrow{\pi} \At_K^{\dagger} \rightarrow \Et_K \rightarrow 0 \]

where every object is a Tate ring, and so this sequence gives rise to nontrivial higher locally analytic vectors.

\bibliographystyle{amsalpha}
\bibliography{bibli}
\end{document}